\documentclass[reqno, 11pt]{amsart}
\usepackage{amsmath, amsthm, amscd, amsfonts, amssymb, graphicx, color, mathrsfs}
\usepackage{extarrows}
\usepackage[bookmarksnumbered, colorlinks, plainpages]{hyperref}
\hypersetup{colorlinks=true,linkcolor=red, anchorcolor=green,
	citecolor=cyan, urlcolor=red, filecolor=magenta, pdftoolbar=true}

\newtheorem{theorem}{Theorem}[section]
\newtheorem{lemma}[theorem]{Lemma}
\newtheorem{proposition}[theorem]{Proposition}
\newtheorem{corollary}[theorem]{Corollary}
\theoremstyle{definition}

\newtheorem{example}[theorem]{Example}

\newtheorem{question}[theorem]{Question}
\theoremstyle{remark}
\newtheorem{remark}[theorem]{Remark}
\numberwithin{equation}{section}

\begin{document}
	
	\setcounter{page}{1}
	
	\title[Power quasinormal operators and the root problem]
	{Power quasinormal operators and the root problem}

	\author[J.B. Zhou]{Jing-Bin Zhou}
	\address{Jing-Bin Zhou, School of Mathematics,
		Shanghai University of Finance and Economics,
		777 Gouding Road, Shanghai 200433, P. R. China}
	\email{zhoujingbin@stu.sufe.edu.cn}
	
	\author[S. Yang]{Shihai Yang}
	\address{Shihai Yang, School of Mathematics,
		Shanghai University of Finance and Economics,
		777 Gouding Road, Shanghai 200433, P. R. China}
	\email{yang.shihai@mail.shufe.edu.cn}

	\subjclass[2020]{47B15, 47B20.}
	
	\keywords{Power quasinormal operator, Quasinormal operator, Paranormal operator, Spectral theorem}
	\begin{abstract}
		
			In this paper, we construct an $n$-power quasinormal operator $T$ such that $T^n$ is not quasinormal for some positive integer $n$, thereby providing a counterexample to \cite[Lemma 3.1]{ko-filomat-2023}. We then investigate the relationships among the $n$-power quasinormality of $T$, the normality of $T^n$, and the quasinormality of $T^n$, and show that these three conditions are equivalent in finite-dimensional spaces. We also provide a new proof that $n$-power quasinormal operators have the single-valued extension property \cite[Theorem 3.2]{ko-filomat-2023}; unlike the original proof, our argument does not rely on \cite[Lemma 3.1]{ko-filomat-2023} and thus closes the gap in the original argument. In addition, for a fixed operator $T$, we characterize all positive integers $n$ for which $T$ is $n$-power quasinormal. As consequences, several results of Sid Ahmed \cite{ahmed-bmaa-2011} are extended. Finally, we prove that every paranormal $n$-power quasinormal operator is quasinormal.  Closely related to this, we also give an affirmative answer to the root problem of Stankovi\'c and Kubrusly \cite[Question 2.11]{stankovic-afa-2025}.
	\end{abstract}
	\maketitle
	\section{Introduction}
	
		Let \(\mathbb{B}(\mathcal H)\) denote the algebra of all bounded linear operators on a complex Hilbert space \((\mathcal H, \langle \cdot, \cdot \rangle)\). We denote the adjoint, norm, null space, and range of $T\in\mathbb{B}(\mathcal{H})$ by $T^\ast$, $\|T\|$, $\ker T$ and $\operatorname{ran} T$, respectively. For any \(T, S \in \mathbb B(\mathcal H)\), we write $[S,T]=ST-TS$ for the commutator. An operator \(T\) is called \emph{normal} if \([T, T^*] = 0\). Largely owing to the development of the Spectral Theorem, normal operators have been studied extensively and systematically, and various generalizations of normal operators have become one of the main directions in operator theory. Specifically, Brown \cite{brown} called an operator \(T\) \emph{quasinormal} if \([T, T^*T] = 0\). A closed subspace \(M\) of \(\mathcal H\) is said to be \emph{invariant} under \(T\) if \(TM \subseteq M\). If both \(M\) and its orthogonal complement \(M^\perp\) are invariant under \(T\), then \(M\) is called a \emph{reducing subspace} for \(T\). Halmos \cite{Halmos-GTM-1982} called \(T\) \emph{subnormal} if \(T \in \mathbb B(\mathcal H)\) is the restriction of a normal operator \(N\) on some Hilbert space \(\mathcal K \supseteq \mathcal H\) to an invariant subspace \(\mathcal H\), i.e., \(N|_{\mathcal H} = T\). If \(S, T \in \mathbb B(\mathcal H)\) and \(\langle (S - T)x, x \rangle \ge 0\) for all \(x \in \mathcal H\), we write \(S \ge T\). Halmos \cite{Halmos-GTM-1982} called \(T\) \emph{hyponormal} if \(TT^* \le T^*T\). If \(T \ge 0\), then \(T\) is called \emph{positive}. The square root of the positive operator \(T^*T\) is denoted by \(|T|\). Furuta, Ito and Yamazaki \cite{furuta} called \(T\) \emph{of class A} if \(|T|^2 \le |T^2|\). Istrăţescu \cite{is-pjm-1967} called \(T\) \emph{paranormal} if \(\|Tx\|^2 \le \|T^2x\| \|x\|\) for all \(x \in \mathcal H\). It is well known that these generalized normal operators satisfy the following strict inclusions:
		\begin{equation}\label{normalclass}
			\text{normal} \subsetneq \text{quasinormal} \subsetneq \text{subnormal} \subsetneq \text{hyponormal} \subsetneq \text{class A} \subsetneq \text{paranormal}.
		\end{equation}
		
		Besides the above generalized normal operators, \(n\)-power quasinormal operators also form a generalization of normal operators, and this is the main object of study in this paper. Let \(\mathbb N\) denote the set of positive integers. Sid Ahmed \cite{ahmed-bmaa-2011} called an operator \(T\) \emph{\(n\)-power quasinormal} if \([T^n, T^*T] = 0\) for some \(n \in \mathbb N\). A closely related class is that of \(n\)-power normal operators, introduced by Jibril \cite{jibril-jse-2008}: an operator \(T\) is called \emph{\(n\)-power normal} if \([T^n, T^*] = 0\) for some \(n \in \mathbb N\). Jibril proved that \(T\) is \(n\)-power normal if and only if \(T^n\) is normal \cite[Proposition 2.1]{jibril-jse-2008}. Furtado and Johnson \cite[Theorem 5.2]{fur-lma-2022} gave the structure of \(n\)-power normal matrices in finite-dimensional spaces.
		
		Recently, in the theory of generalized normal operators, the following problem has attracted renewed attention. Let $\mathscr{A}$ be a class of operators and $\mathscr{B}$ a subclass of $\mathscr{A}$. For $n\in\mathbb{N}$, if $T\in\mathscr{A}$ and $T^n\in\mathscr{B}$, does it follow that $T\in\mathscr{B}$? This problem is known as {\em the $n$-th root problem (for the class $\mathscr{B}$)} \cite{stankovic-afa-2025}. In this direction, Stampfi first proved the following result.
\begin{theorem}\cite[Theorem 5]{sta-pjm-1962}
If $T$ is hyponormal and $T^n$ is normal for some $n\in\mathbb{N}$, then $T$ is normal.
\end{theorem}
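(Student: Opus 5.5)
The strategy is to pass to the spectrum, where normality of $T^n$ forces rigidity, and then use two standard facts about hyponormal operators: they satisfy $\|T\|=r(T)$ (normaloid), and for an isolated point $\lambda$ of $\sigma(T)$ the eigenspace $\ker(T-\lambda)$ reduces $T$ (Stampfli's result). Let $N=T^n$, which is normal by hypothesis. By the spectral mapping theorem, $\sigma(N)=\{\lambda^n:\lambda\in\sigma(T)\}$.

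\textbf{Step 1: decomposing the space.} Let $E$ be the spectral measure of $N$. The spectral subspaces $E(\Delta)\mathcal H$ commute with $N$, but not obviously with $T$. Since $T$ commutes with $N$, Fuglede's theorem gives that $T$ commutes with $E(\Delta)$ for every Borel set $\Delta$. Hence each $E(\Delta)\mathcal H$ reduces $T$. Restrictions of hyponormal operators to reducing subspaces remain hyponormal, so $T|_{E(\Delta)\mathcal H}$ is hyponormal and its $n$-th power is the normal operator $N|_{E(\Delta)\mathcal H}$.

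\textbf{Step 2: the kernel of $N$.} Take $\Delta=\{0\}$, so $T_0=T|_{E(\{0\})\mathcal H}$ satisfies $T_0^n=0$. Then $\sigma(T_0)=\{0\}$, and since hyponormal operators are normaloid, $\|T_0\|=r(T_0)=0$, so $T_0=0$.

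\textbf{Step 3: away from zero.} Off $E(\{0\})\mathcal H$, approximate using sets $\Delta$ lying in small sectors or annuli. On such a piece, $\sigma(T|)$ is contained in the $n$-th roots of a small set, which is a union of $n$ small disjoint pieces near $\lambda_0^{1/n}\omega^k$, where $\omega$ is a primitive $n$-th root of unity. The Riesz idempotents for these pieces are polynomials in $T$ in the limit, and the main work is to show they are orthogonal projections. For isolated eigenvalues this is exactly Stampfli's reduction theorem. Iterating this reduces the problem to showing that $T-\mu$ is small on the corresponding piece, and hence that $T$ is a direct sum of operators each of the form $\mu I+(\text{small})$ with small self-commutator.

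The obstacle is the continuous-spectrum case. A cleaner alternative, which I would try first, is to use $\|T^k x\|$ growth inequalities for hyponormal operators. Hyponormality gives $\|T^{k}x\|^2\le\|T^{k-1}x\|\,\|T^{k+1}x\|$, so $\|T^kx\|$ is log-convex in $k$. Together with $\|N^m x\|$ being controlled spectrally by $N$, this forces equality in these inequalities. Equality in $\|Tx\|^2\le\|T^2x\|\,\|x\|$ along the whole orbit should then yield $T^*T=TT^*$. I expect making this equality argument precise to be the hardest step.
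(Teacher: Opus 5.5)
The paper gives no proof of this statement; it is quoted from Stampfli. Within the paper it can be recovered from the final theorem answering Question~\ref{question1}. A hyponormal $T$ is paranormal and a normal $T^n$ is quasinormal, so $T$ is quasinormal. Theorem~\ref{coromain} then gives $T=\widetilde T\oplus N_0$. Here $N_0$ is nilpotent and hyponormal, hence $0$, and $\widetilde T$ is quasinormal with dense range, hence normal by Corollary~\ref{quaisnormal}.

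Judged on its own, your proposal has a genuine gap. Steps 1 and 2 are correct: Fuglede makes every $E(\Delta)\mathcal H$ reducing, and a nilpotent hyponormal operator vanishes because hyponormal operators are normaloid. Step 3, which carries the whole content of the theorem, is not an argument. You call the orthogonality of the Riesz idempotents of the $n$ spectral pieces ``the main work'' and then do not do it. Stampfli's reduction theorem covers only isolated \emph{points}, and these pieces are in general not points.

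For non-singleton spectral sets, hyponormality alone does not make Riesz idempotents self-adjoint. On $l^2(\mathbb N_0)\oplus l^2(\mathbb N_0)$ let $S$ be the unilateral shift, $f=\tfrac14\sum_{k\ge0}(-\tfrac13)^ke_k$, $By=\langle y,f\rangle e_0$, and $Z=\begin{bmatrix} S&B\\0&3I+S\end{bmatrix}$. Using $(3I+S)f=\tfrac34e_0$ one finds $\|Z(x,y)\|^2-\|Z^*(x,y)\|^2=(1-\|f\|^2)|x_0|^2+|\langle y,f\rangle|^2+|y_0|^2-\tfrac32\operatorname{Re}(x_0\overline{y_0})\ge0$. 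So $Z$ is hyponormal with $\sigma(Z)=\overline{\mathbb D}\cup(3+\overline{\mathbb D})$. Yet the Riesz idempotent of $\overline{\mathbb D}$ is $\begin{bmatrix} I&-X\\0&0\end{bmatrix}$ with $X(3I+S)-SX=B\neq0$, which is not self-adjoint. Hence the hypothesis $T^n=N$ must enter Step 3 essentially, and ``polynomials in $T$ in the limit'' and ``iterating'' do not supply it.

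Your alternative route fails outright. Log-convexity of $k\mapsto\|T^kx\|$ is just paranormality, and nothing can force equality in $\|Tx\|^2\le\|T^2x\|\,\|x\|$. The normal operator $T=\operatorname{diag}(1,2)$ satisfies all the hypotheses, yet $x=(1,1)$ gives $5<\sqrt{34}$. Even if such equality held, it would not yield $T^*T=TT^*$. Arguments of this flavor can work only through approximate spectral estimates, as in the paper's proof of Lemma~\ref{paralem}.

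The missing idea is to extract from $N$ a normal $n$-th root commuting with $T$ and $T^*$. This reduces everything to a hyponormal operator with finite spectrum, where the eigenvalue form of Stampfli's argument applies.
\begin{itemize}
\item Take a bounded Borel $g$ with $g(z)^n=z$ and set $R=g(N)$. By your Step 1, $R$ and $R^*$ commute with $T$ and $T^*$.
\item On $\mathcal H_\delta=E(\{|z|\ge\delta\})\mathcal H$, which reduces $T$ and $R$ (restrictions understood), $R$ is invertible. The operator $U=TR^{-1}$ satisfies $U^n=T^nR^{-n}=I$ and $U^*U-UU^*=(R^*R)^{-1}(T^*T-TT^*)\ge0$, a product of commuting positive operators.
\item Since $U$ is hyponormal, $\ker(U-\lambda)\subseteq\ker(U^*-\overline{\lambda})$. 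So the eigenspaces of $U$ for distinct $n$-th roots of unity are mutually orthogonal and reduce $U$. Because $z^n-1$ has simple roots, they span $\mathcal H_\delta$.
\item Thus $U$ is unitary, and $T^*T-TT^*=R^*R\,(U^*U-UU^*)=0$ on $\mathcal H_\delta$.
\end{itemize}
Since $\bigcup_{\delta>0}\mathcal H_\delta$ is dense in $E(\mathbb C\setminus\{0\})\mathcal H$, this together with your Step 2 shows that $T$ is normal.
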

 Subsequently, Ando extended this result.
\begin{theorem}\cite[Theorem 6]{ando-act-1972}
If $T$ is paranormal and $T^n$ is normal for some $n\in\mathbb{N}$, then $T$ is normal.
\end{theorem}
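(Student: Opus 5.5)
The plan is to prove Ando's theorem directly from the definition of paranormality, with Stampfli's result (Theorem 1.1) as a model. The idea is to show that a paranormal operator whose power $T^n$ is normal must have its spectral data aligned so that $T$ is normal. There are three steps.

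\textbf{Step 1: reduce to the pieces of $T^n$.} Put $N=T^n$, which is normal and commutes with $T$. Since $T$ commutes with $N$, Fuglede's theorem gives that $T$ commutes with every spectral projection of $N$. Let $E$ be the spectral measure of $N$. Then $\ker N = E(\{0\})\mathcal H$ and its complement reduce $T$, and every restriction of a paranormal operator to a reducing subspace is again paranormal. On $\ker N$ we have $T^n=0$, and a paranormal nilpotent is zero. Indeed, paranormality gives $\|Tx\|^2\le \|T^2x\|\|x\|$, so $T^2x=0$ forces $Tx=0$. Hence $\ker T^2=\ker T$, and by induction $\ker T^n=\ker T$. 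So $T=0$ on $\ker N$, and we may assume $N$ is injective.

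\textbf{Step 2: handle annuli where $|N|$ is nearly constant.} Decompose $\mathcal H$ using the spectral projections $E(\{r<|z|\le r(1+\varepsilon)\})$, each of which reduces $T$. On such a piece $T$ is invertible and $T^n$ is normal with $|T^n|$ pinched between $r$ and $r(1+\varepsilon)$. Write $\rho=r^{1/n}$. Because paranormal operators are normaloid, and restrictions and inverses behave well here, the aim is to show that $\|T\|\le \rho(1+\varepsilon)^{1/n}$ and $\|T^{-1}\|\le \rho^{-1}$.

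The bound on $\|T\|$ comes from normaloidness:
\[
\|T\|=r(T)=r(T^n)^{1/n}\le \bigl(r(1+\varepsilon)\bigr)^{1/n}.
\]
For the lower bound I would use the standard consequence of paranormality that $\|Tx\|/\|x\|\le \|T^2x\|/\|Tx\|\le\cdots$, so the ratios $\|T^{k+1}x\|/\|T^kx\|$ are nondecreasing in $k$. Taking the product of the first $n$ ratios gives
\[
\|Tx\|^n\le \|T^nx\|\,\|x\|^{n-1}.
\]
To get $\|Tx\|\ge\rho\|x\|$ instead, I would feed in the boundedness of the ratios and let $k\to\infty$: the ratio limit is at most $r(T)$, and this controls the upper side. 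For the lower side, apply the same monotonicity to $y=T^{-(n-1)}x$ in order to compare $\|Tx\|$ with $\|T^nx\|^{1/n}$ from below.

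\textbf{Step 3: conclude.} Once each piece $T_j$ satisfies $\rho_j\|x\|\le\|T_jx\|\le\rho_j(1+\varepsilon)^{1/n}\|x\|$, the operator $T_j$ is within $O(\varepsilon)$ of $\rho_j$ times a unitary. Then $\|T_j^*T_j-T_jT_j^*\|=O(\varepsilon)\rho_j^2$, so summing over pieces gives $\|[T^*,T]\|\le C\varepsilon\|T\|^2$. Letting $\varepsilon\to0$ yields normality.

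The main obstacle is the lower bound in Step 2. Paranormality naturally gives monotonically increasing ratios, which yields upper control of $\|Tx\|$ relative to $\|T^nx\|^{1/n}$, but not obviously lower control. If that fails, I would instead use the fact that on each piece $T$ is invertible and $T^{-1}$ has normal $n$-th power. I would try to show that $T^{-1}$ is also normaloid, or apply monotonicity along backward orbits. Failing that, I would aim to prove the operator inequality $T^{*n}T^n\ge (T^*T)^n$-type comparisons directly from the Ando characterization of paranormality, namely $T^{*2}T^2-2\lambda T^*T+\lambda^2\ge0$ for all $\lambda>0$.
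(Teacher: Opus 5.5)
Your outline is correct, and the step you flag as the main obstacle is not one. Once you have $\|T_j\|\le\rho(1+\varepsilon)^{1/n}$ on an annular piece, every $x$ in that piece satisfies
\[
\rho^n\|x\|\le\||N|x\|=\|T^nx\|\le\|T_j\|^{n-1}\|Tx\|\le\rho^{n-1}(1+\varepsilon)^{(n-1)/n}\|Tx\|.
\]
Hence $\|Tx\|\ge\rho(1+\varepsilon)^{-(n-1)/n}\|x\|$, which is all Step 3 needs.

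Your own suggestion also works. For $y=T_j^{-(n-1)}x$, the nondecreasing ratios $\|T^{i+1}y\|/\|T^iy\|$, $0\le i\le n-1$, end with $\|Tx\|/\|x\|$. Therefore $r\le\||N|y\|/\|y\|=\|T^ny\|/\|y\|\le(\|Tx\|/\|x\|)^n$, which gives $\|Tx\|\ge\rho\|x\|$ exactly.

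Your fallback works too. Putting $x=T^2y$ in the paranormality inequality shows that the inverse of an invertible paranormal operator is paranormal, hence normaloid. So $\|T_j^{-1}\|=r(N_j^{-1})^{1/n}\le\rho^{-1}$.

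In Step 3, take the supremum over the mutually orthogonal reducing blocks rather than summing. Since $\rho_j\le\|T\|$, this gives $\|[T^*,T]\|=O(\varepsilon)\|T\|^2$, and the proof is complete.

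The paper does not prove this theorem; it only quotes it from Ando. The closest argument is its final theorem (the answer to Question~\ref{question1}), with Lemma~\ref{paralem} replaced by Fuglede's theorem. That proof slices the spectral measure of $|T^n|$ arithmetically into $[0,h]$ and $(kh,(k+1)h]$. It gets the upper bound on each slice from Lemma~\ref{parapro}(i) and the lower bound by the same trick as in the display above. It then concludes $\|T^*T-|T^n|^{2/n}\|<2h^{2/n}$, hence $|T|^n=|T^n|$ and $T|T|=|T|T$.

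That route yields only quasinormality. Normality needs one more step, e.g.\ $\ker T^*\subseteq\ker(T^*)^n=\ker T^n=\ker T$ together with Corollary~\ref{quaisnormal}.

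You instead split off $\ker T^n$, use geometric annuli on which $T$ is invertible, and read off normality directly from the nearly scalar modulus and the unitary polar factor. This is shorter in the normal case. However, it relies on $T^n$ being invertible on each slice, which fails when $T^n$ is merely quasinormal: if $T^n$ is a unilateral shift, $|T^n|=I$ but $T^n$ is not onto. In that setting only the comparison of $T^*T$ with $|T^n|^{2/n}$ survives. The genuinely hard ingredient there is Lemma~\ref{paralem}, which in your setting comes free from Fuglede.
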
 
Curto et al. recently initiated the study of the $n$-th root problem for quasinormal operators.
\begin{theorem}\cite[Theorem 2.3]{curto-jfa-2020}
If $T$ is a left-invertible subnormal operator and $T^2$ is quasinormal, then $T$ is quasinormal.
\end{theorem}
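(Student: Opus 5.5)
The plan is to encode subnormality through its Agler–Embry moment representation and to reduce the statement to a uniqueness assertion for operator moment problems. By Agler's characterization, a subnormal $T$ admits a positive operator-valued (semispectral) measure $F$ on $[0,\|T\|^2]$ with
\begin{equation*}
T^{*k}T^k=\int x^k\,dF(x),\qquad k=0,1,2,\dots .
\end{equation*}
I would then use Embry's criterion: $T$ is quasinormal if and only if $F$ is a spectral measure. Equivalently, the criterion says $T^{*k}T^k=(T^*T)^k$ for all $k$. To get quasinormality from the second form I would expand $\|(T^*T)Tx-T(T^*T)x\|^2$. Each resulting term is an expression of type $\langle T^{*a}(\cdots)T^{b}x,x\rangle$, and I would rewrite these using the identities for $k=2,3$, which should make the whole expression vanish. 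So the goal is to prove that $F$ is spectral.

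Next I translate the hypothesis on $T^2$. If $T^2$ is quasinormal, then $(T^2)^{*k}(T^2)^k=\bigl((T^2)^*T^2\bigr)^k$ for every $k$, since $T^2$ commutes with $T^{*2}T^2$. In terms of $F$ this reads
\begin{equation*}
\int x^{2k}\,dF(x)=\Bigl(\int x^2\,dF(x)\Bigr)^k,\qquad k\ge 0 .
\end{equation*}
Let $G$ be the push-forward of $F$ under $\varphi(x)=x^2$. This is a semispectral measure on the compact interval $[0,\|T\|^4]$, and the identity becomes $\int y^k\,dG=A^k$ with $A=\int y\,dG\ge 0$. Let $E$ be the spectral measure of $A$. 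Then $E$ and $G$ have the same moments of all orders. Since polynomials are dense in $C([0,\|T\|^4])$, the scalar measures $\langle G(\cdot)x,x\rangle$ and $\langle E(\cdot)x,x\rangle$ coincide for every $x$, so $G=E$ is spectral.

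Because $\varphi$ is a Borel isomorphism of $[0,\infty)$ onto itself, $F=G\circ\varphi$ is also spectral. Hence $T^{*k}T^k=(T^*T)^k$ for all $k$, and by the previous paragraph $T$ is quasinormal.

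I expect the main obstacle to be the first step: setting up the moment representation correctly and justifying the equivalence "$F$ spectral $\iff$ $T$ quasinormal" in the direction I need. The trouble is that the algebraic identity $T^{*k}T^k=(T^*T)^k$ alone must be turned into the commutation $[T,T^*T]=0$, and the expansion is easy to get wrong. If the direct expansion fails, I would instead argue as follows: with $F$ spectral, $T^*T=\int x\,dF$ has spectral measure $F$. Then $T|T|^2$ and $|T|^2T$ have equal Gram data by the moment identities, and that should force them to be equal.

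I would also watch for where left-invertibility is genuinely needed. In this argument I don't see it used unless Embry's criterion requires injectivity of $T^*T$, for example to identify the polar part of $T^2$ with a power of an isometry. So I would check carefully whether $\ker T\neq\{0\}$ breaks that step, and invoke left-invertibility there if so.
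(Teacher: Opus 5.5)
Your argument is correct, and it never uses left-invertibility; replacing $x^2$ by $x^n$ gives the statement for $T^n$ as well. The expansion you were unsure of does work. With $A=T^*T$ we have $(AT-TA)^*(AT-TA)=T^*A^2T-T^*ATA-AT^*AT+AT^*TA$. Since $T^*AT=T^{*2}T^2=A^2$ and $T^*A^2T=T^*(T^{*2}T^2)T=T^{*3}T^3=A^3$, each of the four terms equals $A^3$, so the sum vanishes and the fallback is unnecessary. The representing measure is Embry's rather than Agler's. You only need the easy direction, which you get by compressing the spectral measure of a normal extension.

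The paper does not prove this statement; it quotes it as background. It is, however, a special case of the paper's last theorem (a paranormal $T$ with $T^n$ quasinormal is quasinormal), since subnormal operators are paranormal.

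Your route is essentially that of Pietrzycki and Stochel. Subnormality enters through the fact that $k\mapsto T^{*k}T^k$ is the moment sequence of a semispectral measure on a compact interval. Uniqueness in the Hausdorff moment problem, together with injectivity of $x\mapsto x^2$ on $[0,\infty)$, then does all the work, so the proof is short and conceptual.

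The paper cannot use a representing measure, because for a paranormal operator $\|T^kx\|^2$ need not be a moment sequence. Instead it works only with the norm inequalities of Lemma~\ref{parapro}. It first proves $T|T^n|=|T^n|T$ by slicing the spectrum of $|T^n|$ and estimating ratios (Lemma~\ref{paralem}). It then slices again to obtain $|T|^2=|T^n|^{2/n}$, from which $T$ commutes with $|T|$.

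So the paper covers a much larger class of operators at the price of delicate approximation estimates. Your proof is much shorter but is confined to subnormal operators.
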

Pietrzycki and Stochel removed the left invertibility assumption in the above result and obtained a more general version.
\begin{theorem}\cite[Theorem 1.2]{sto-jfa-2021}
If $T$ is subnormal and $T^n$ is quasinormal for some $n\in\mathbb{N}$, then $T$ is quasinormal.
\end{theorem}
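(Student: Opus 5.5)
The plan is to realize $T$ inside its normal extension and to reduce quasinormality of $T^n$ to an invariance statement about the positive operator $|N|^{2n}$. The one structural fact I need first is the elementary identity for quasinormal operators: if $S$ is quasinormal then $S^{*2}S^2=(S^*S)^2$. Indeed,
\[
S^*S^*SS = S^*(S^*S)S = S^*S\,S^*S,
\]
using $[S,S^*S]=0$ in the last step.

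Let $N\in\mathbb B(\mathcal K)$ be a normal extension of $T$, with $\mathcal H\subseteq\mathcal K$ invariant under $N$. Let $P$ be the orthogonal projection of $\mathcal K$ onto $\mathcal H$. Since $N^k\mathcal H\subseteq\mathcal H$, we have $T^k=N^k|_{\mathcal H}$, and hence for $x\in\mathcal H$
\[
T^{*k}T^k x = P N^{*k}N^k x = P|N|^{2k}x ,
\]
where normality of $N$ gives $N^{*k}N^k=|N|^{2k}$. Put $A=|N|^{2n}\ge 0$ and $S=T^n$, so that $S^*S=PA|_{\mathcal H}$ and $S^{*2}S^2=PA^2|_{\mathcal H}$.

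Quasinormality of $S=T^n$ together with the identity above gives $PAPAP=PA^2P$ on $\mathcal K$. Equivalently, $PA(I-P)AP=0$, which says $\|(I-P)APx\|^2=0$ for all $x$. Hence $(I-P)AP=0$, so $\mathcal H$ is invariant under $A$. Because $A$ is self-adjoint, $\mathcal H$ then reduces $A$, and therefore $\mathcal H$ is invariant under every Borel function of $A$. In particular it is invariant under $|N|^2=A^{1/n}$, since $A^{1/n}$ is a norm limit of polynomials in $A$ on $\sigma(A)\subseteq[0,\infty)$.

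It then follows that $T^*T=P|N|^2|_{\mathcal H}=|N|^2|_{\mathcal H}$. Since $N$ is normal, $N$ commutes with $|N|^2$, and so on $\mathcal H$
\[
T(T^*T)=N|N|^2|_{\mathcal H}=|N|^2N|_{\mathcal H}=(T^*T)T ,
\]
where the last equality uses $N\mathcal H\subseteq\mathcal H$ and $|N|^2\mathcal H\subseteq\mathcal H$. This is exactly the quasinormality of $T$.

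The step needing the most care is passing from $T^n$ to $|N|^{2n}$: one must check that the compression formula $T^{*k}T^k=P|N|^{2k}|_{\mathcal H}$ really holds for an arbitrary (not necessarily minimal) normal extension, which it does since only invariance of $\mathcal H$ under $N$ is used. The other delicate point is the extraction of the $n$-th root while preserving invariance, and it is handled by the reducing-subspace remark above.
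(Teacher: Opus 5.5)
Your proof is correct. The compression formula $T^{*k}T^k=PN^{*k}N^k|_{\mathcal H}$ needs only $N\mathcal H\subseteq\mathcal H$. The relation $(S^*S)^2=S^{*2}S^2$ becomes $X^*X=0$ with $X=(I-P)AP$. Once $P$ commutes with $A=|N|^{2n}$, it commutes with $A^{1/n}=|N|^2$, so $T^*T=N^*N|_{\mathcal H}$, and normality of $N$ gives $[T,T^*T]=0$.

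The paper does not prove this statement itself; it is quoted from Pietrzycki--Stochel. It is, however, recovered as a special case of the paper's last theorem, since subnormal operators are paranormal by \eqref{normalclass}. That route is intrinsic and considerably heavier:
\begin{itemize}
\item Lemma \ref{paralem} obtains $T|T^n|=|T^n|T$ by cutting the spectrum of $|T^n|$ into intervals of length $h$. It then bounds the commutator slice by slice using the inequalities of Lemma \ref{parapro} together with $\|T^{nm}x\|=\||T^n|^mx\|$.
\item The final theorem uses the same slicing to get $\|T^*T-|T^n|^{2/n}\|<2h^{2/n}$. Letting $h\to0$ gives $|T|^n=|T^n|$, and finally $T|T|=|T|T$.
\end{itemize}

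In your argument both conclusions come for free from the normal extension. The single identity $S^{*2}S^2=(S^*S)^2$ for $S=T^n$ forces $\mathcal H$ to reduce $|N|^{2n}$. The $n$-th root is then taken inside the functional calculus of one self-adjoint operator on $\mathcal K$, so no approximation argument is needed.

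The trade-off is that your proof depends entirely on having a normal extension. It therefore says nothing about hyponormal, class~A or paranormal operators. The paper's spectral-slicing estimates are exactly what is required to answer Question \ref{question1} in that generality.
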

Later, the same authors extended the above result and established the following.

\begin{theorem}\cite[Theorem 1.2]{sto-ann-2023}
	If $T$ is of class A and $T^n$ is quasinormal for some $n\in\mathbb{N}$, then $T$ is quasinormal.
\end{theorem}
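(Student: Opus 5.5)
The plan is to reduce the root problem to a statement about the positive operators $|T^k|$. Since $T$ is of class A, we have $|T|^2\le|T^2|$. The first goal is to show that this forces the chain
\[
|T|^{2k}\le |T^k|^2=T^{*k}T^k, \qquad k\in\mathbb{N},
\]
or at least the weaker comparison $|T|^{2n}\le |T^n|^2$ needed for the given $n$. I would obtain this from the Furuta inequality, or from the known fact that class A operators satisfy $|T^{k}|^{2/k}\le|T^{k+1}|^{2/(k+1)}$. I would then use quasinormality of $T^n$: it means $T^n$ commutes with $|T^n|$. Writing the polar decomposition $T^n=V|T^n|$, this says $V$ commutes with $|T^n|$, so $T^{*n}T^n$ is a ``quasinormal modulus''. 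Iterating gives $|T^{nm}|^2=|T^n|^{2m}$ for all $m$, which means $|T^{nm}|^{2/(nm)}=|T^n|^{2/n}$ is constant along multiples of $n$.

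Next I would combine this with the monotonicity from class A. The sequence $A_k:=|T^k|^{2/k}$ is increasing in the operator order, starting at $A_1=|T|^2$. Along the subsequence $k=nm$ it is constant, equal to $A_n$. By monotonicity, $A_k=A_n$ for all $k\ge n$. I then want to push this down to $k=1$, i.e.\ show $|T|^2=|T^n|^{2/n}$ and more strongly $|T^k|^2=|T|^{2k}$ for all $k$. Once $T^{*k}T^k=(T^*T)^k$ holds for $k=1,2$ and beyond, a standard argument gives quasinormality. Specifically, $T^*(T^*T)T=(T^*T)^2$, together with the class A structure, yields $T|T|=|T|T$. To see this, write $T=U|T|$ and observe that the identity $|T|U^*|T|^2U|T|=|T|^4$ says $U^*|T|^2U=|T|^2$ on $\overline{\operatorname{ran}}|T|$. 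Combined with $U|T|^2U^*\le|T|^2$-type information, this forces $U$ to commute with $|T|^2$.

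The main obstacle is the descent from ``constant for $k\ge n$'' to equality at $k=1$, since monotone sequences can stabilize late. To handle this, I would decompose $\mathcal H=\overline{\operatorname{ran}}\,T^{n}\oplus\ker T^{*n}$, or more usefully split off $\ker T$. The idea is to restrict $T$ to the invariant subspace $\mathcal M=\overline{\operatorname{ran}\,T^{n-1}}$. On $\mathcal M$, the identities $A_k=A_n$ for $k\ge n$ translate into $A_1=A_2$ for $T|_{\mathcal M}$, because $|T^{k}|^2$ restricted through $T^{n-1}$ relates to $T^{*(n-1)}|T^{k-n+1}|^2T^{n-1}$. This gives quasinormality of $T|_{\mathcal M}$. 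I would then use the class A inequality $|T|^2\le|T^2|$ on the complement, where $T^{n-1}$ vanishes in the appropriate compressed sense, to show that $\mathcal M$ reduces $T$. Class A operators restricted to invariant subspaces remain class A, and their nilpotent parts are zero, since a nilpotent paranormal operator is $0$. This last point is what I expect to carry the argument: a class A operator $T$ with $T|_{\mathcal M}$ quasinormal and $T$ nilpotent-like on $\mathcal M^\perp$ should split as a direct sum. If the direct route fails, I would fall back to following Pietrzycki--Stochel's strategy of showing $|T^k|^2=|T|^{2k}$ via the uniqueness of moment sequences for the scalar sequences $\|T^kx\|^2$.
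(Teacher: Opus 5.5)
There is a genuine gap, and it is where you expected it. Your first steps are sound: granting the monotonicity of $A_k=|T^k|^{2/k}$ for class A operators, the identity $A_{nm}=A_n$ gives $A_k=A_n=:D$ for all $k\ge n$, where $D=|T^n|^{2/n}$. The difficulty is the descent to $A_1=D$, and none of the mechanisms you propose for it works.

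(1) Let $S=T|_{\mathcal M}$ with $\mathcal M=\overline{\operatorname{ran}T^{n-1}}$. The stabilization only tells you $\|S^jT^{n-1}y\|^2=\langle D^{n-1+j}y,y\rangle$ for $j\ge1$. Such norms do not determine $\|\,|S|^2x\|$. If you unwind $|S^2|=|S|^2$ through the polar decomposition of $T^{n-1}$, it amounts to $|T^{n-1}|^2=D^{n-1}$, i.e.\ $A_{n-1}=A_n$. That is one step of the very descent you want, so the argument is circular.

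(2) The splitting heuristic is false. For $n\ge3$ the unilateral shift $W$ is quasinormal, hence class A with $W^n$ quasinormal, and $W|_{\mathcal M}$ is an isometry. Yet $\mathcal M=\overline{\operatorname{span}}\{e_k:k\ge n-1\}$ does not reduce $W$, and the compression of $W$ to $\mathcal M^\perp=\operatorname{span}\{e_0,\dots,e_{n-2}\}$ is a nonzero nilpotent. The fact that nilpotent paranormal operators vanish applies to restrictions to invariant subspaces, not to compressions to $\mathcal M^\perp$.

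(3) The moment-sequence fallback is the subnormal proof and does not transfer. By Lambert's theorem, $\{\|T^kx\|^2\}_{k\ge0}$ is a Stieltjes moment sequence for every $x$ only when $T$ is subnormal.

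Your last step is also wrong as written. The identity $|T^2|=|T|^2$ already forces class A, so class A adds nothing there. The inequality $U|T|^2U^*\le|T|^2$ you invoke is $TT^*\le T^*T$, i.e.\ hyponormality, which you do not have. For example, take $\{e_0\}\cup\{f_k\}_{k\ge0}\cup\{g_k\}_{k\ge0}$ orthonormal and set $Te_0=f_0+g_0$, $Tf_k=f_{k+1}$, $Tg_k=\sqrt3\,g_{k+1}$. Then $|T|^2=2\oplus I\oplus3I=|T^2|$, but $[T,T^*T]e_0=f_0-g_0\ne0$. What you need there is Embry's criterion. If $A_1=D$, then all $A_k$ coincide, so $T^{*k}T^k=(T^*T)^k$ for every $k$. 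Already $k=2,3$ suffice: with $A=T^*T$ one has $T^*AT=A^2$ and $T^*A^2T=A^3$, and $\|(AT-TA)x\|^2$ expands to $0$.

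What is missing is an argument that the spectral subspaces of $|T^n|$ reduce $T$, i.e.\ $T|T^n|=|T^n|T$. The paper does not reprove this statement; it quotes it from \cite{sto-ann-2023}. It is, however, contained in the paper's final theorem answering Question~\ref{question1}, since class A operators are paranormal. There the commutation is Lemma~\ref{paralem}. It is proved by slicing the spectrum of $|T^n|$ and combining $\|T^{nm}x\|=\||T^n|^mx\|$, the inequality of Lemma~\ref{parapro}(iii), a variance-type estimate on each slice, and an $m\to\infty$ decay argument for the bottom slice. Then Lemma~\ref{parapro}(i) squeezes $T^*T$ against $|T^n|^{2/n}$ slice by slice.

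Your ingredients give half of the commutation cheaply. Let $E$ be the spectral measure of $|T^n|$. From $T^*T\le D$ and $\|T^{nm}y\|=\||T^n|^my\|$ you get $\||T^n|^mTx\|\le c^{m+1/n}\|x\|$ for $x\in E([0,c])\mathcal H$. Hence every $E([0,c])\mathcal H$ is $T$-invariant. The invariance of $E((c,\infty))\mathcal H$ is where the real estimates are needed.

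Once $T|T^n|=|T^n|T$ is known, your stabilization finishes faster than the paper's squeeze. We have $D^nT^*T=T^*|T^n|^2T=|T^{n+1}|^2=D^{n+1}$, which forces $T^*T=D$ on $\overline{\operatorname{ran}}\,D$. On $\ker D$, $T^*T\le D$ forces $T=0$. Hence $|T|^n=|T^n|$ commutes with $T$, and so does $|T|$.
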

Motivated by \eqref{normalclass} and the preceding results, Stanković and Kubrusly naturally raised the following question in \cite[Question 2.11]{stankovic-afa-2025}.
\begin{question}\label{question1}
If $T$ is paranormal and $T^n$ is quasinormal for some $n\in\mathbb{N}$, does it follow that $T$ is quasinormal?
\end{question}

We also pose the following question.

\begin{question}\label{question2}
	If \(T\) is \(n\)-power quasinormal and \(T^n\) is quasinormal for some \(n \in \mathbb N\), does it follow that \(T\) is quasinormal?
\end{question}

Note that Ko and Lee recently used the following result in their proof of the single-valued extension property for \(n\)-power quasinormal operators on Hilbert spaces of arbitrary dimension.

\begin{theorem}\cite[Lemma 3.1]{ko-filomat-2023}\label{kolee}
	If $T\in\mathbb{B}(\mathcal{H})$ is $n$-power quasinormal, then $T^n$ is quasinormal. Conversely, if $T^n$ is quasinormal and $\ker (T^\ast)^n\subseteq \ker T^n$, then $T$ is $n$-power quasinormal.
\end{theorem}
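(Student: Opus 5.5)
For the converse implication I would reduce to normality of $T^n$ and then use Jibril's characterization of $n$-power normal operators. For the forward implication I would try to push the commutation $[T^n,T^*T]=0$ up to $[T^n,T^{*n}T^n]=0$. I expect that second step to be the real obstacle, and I do not expect to be able to close it in general.

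\emph{Converse.} Put $S=T^n$ and assume $S$ is quasinormal with $\ker S^*\subseteq\ker S$.
\begin{enumerate}
\item Since $S$ commutes with $S^*S$ and $\ker S=\ker S^*S$, the subspace $\ker S$ is invariant under $S$. It is also invariant under $S^*$. Indeed, for $x\in\ker S$ we have $S^*S(S^*x)=S^*(S^*S)x\cdot 0$-type cancellation: precisely, $S S^*S=S^*S S$ gives $S^*S\,S^*x=S^*S\,S^*x$, and one checks $S(S^*x)=0$ using $\|SS^*x\|^2=\langle S^*SS^*x,S^*x\rangle$ together with $S^*SS^*=S^*S^*S$ (the adjoint of the quasinormality relation). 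Hence $\ker S$ reduces $S$.
\item The hypothesis $\ker S^*\subseteq\ker S$ gives $\ker S^*\subseteq \ker S$. On the reducing complement $\mathcal{M}=(\ker S)^\perp$, the restriction $S_0=S|_{\mathcal M}$ is therefore injective. Moreover $\ker S_0^*=\ker S^*\cap\mathcal M=\{0\}$, so $S_0$ also has dense range.
\item A quasinormal operator has polar decomposition $S_0=V|S_0|$ in which $V$ commutes with $|S_0|$ and $V$ is an isometry on $\overline{\operatorname{ran}}\,|S_0|=\mathcal M$. Dense range of $S_0$ forces $V$ to have dense, hence full, range, so $V$ is unitary. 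Then $S_0S_0^*=V|S_0|^2V^*=|S_0|^2=S_0^*S_0$, so $S_0$ is normal.
\item It follows that $T^n=S_0\oplus 0$ is normal. By \cite[Proposition 2.1]{jibril-jse-2008}, $[T^n,T^*]=0$. Since trivially $[T^n,T]=0$, we get $[T^n,T^*T]=0$, which is exactly $n$-power quasinormality.
\end{enumerate}

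\emph{Forward implication.} Assume $[T^n,T^*T]=0$; we want $[T^n,T^{*n}T^n]=0$.
\begin{enumerate}
\item By continuous functional calculus, $T^n$ commutes with $|T|$ and with every function of $T^*T$.
\item The difficulty is that $T^{*n}T^n$ is not a function of $T^*T$. The natural attempt is to write
\[
T^{*n}T^n=T^{*(n-1)}(T^*T)T^{n-1}
\]
and move $T^n$ through the factors one at a time.
\item This requires $T^n$ to commute with $T^{*}$, or at least with $T^{*(n-1)}$. That is precisely the normality information that the hypothesis does not supply, since $T^n$ commuting with $T^*T$ does not imply $T^n$ commuting with $T^*$.
\end{enumerate}

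This is the step I would test first on small models, for instance weighted shifts and $2\times2$ operator matrices with $n=2$. I expect it to fail there. Consistent with the abstract's announced counterexample, I therefore anticipate that the forward direction is not provable as stated in general. It should be provable only under extra hypotheses, such as finite dimension or the kernel condition used in the converse, which reduce it to the normality argument above.
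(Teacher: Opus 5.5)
\textbf{The converse.} Your argument here is correct and follows the paper's route in Theorem \ref{mainthm}. Under $\ker(T^*)^n\subseteq\ker T^n$, quasinormality of $T^n$ upgrades to normality of $T^n$; this is the paper's Corollary \ref{quaisnormal}. You re-derive that step via the polar decomposition with $V$ unitary, whereas the paper right-cancels a dense-range $\widetilde T$. Normality of $T^n$ then gives $[T^n,T^*]=0$, hence $[T^n,T^*T]=0$. The text before ``precisely'' in your Step 1 is garbled. The computation $\|SS^*x\|^2=\langle S^*S^*Sx,S^*x\rangle=0$ that follows it is what actually proves $S^*\ker S\subseteq\ker S$, and it is fine.

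\textbf{The forward implication.} This is where the gap is. You are right that it fails for $n\ge 2$; the paper shows exactly this. But your proposal stops at ``I expect it to fail,'' which is neither a proof nor a disproof. Observing that $[T^n,T^*T]=0$ does not yield $[T^n,T^*]=0$ only shows that one particular argument breaks. What is missing is a concrete counterexample.

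Part of your proposed search space cannot contain one. For unilateral weighted shifts, the paper proves that $n$-power quasinormality of $T$ and quasinormality of $T^n$ are equivalent. In finite dimensions, Theorem \ref{mainthm} shows all three conditions coincide, so scalar matrices fail too. By Theorem \ref{coromain}, any counterexample must be infinite-dimensional and must violate $\ker(T^*)^n\subseteq\ker T^n$.

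Your idea of a $2\times 2$ operator matrix does work once the entries act on $l^2(\mathbb N_0)$; this is Example \ref{pnnotimplynq}. Fix $0<c<1$, let $W=\operatorname{shift}(c,1,1,\dots)$, and set $T=\begin{bmatrix} W&0\\ \sqrt{1-c^2}P_{\langle e_0\rangle}&0\end{bmatrix}$ on $l^2(\mathbb N_0)\oplus l^2(\mathbb N_0)$.
\begin{itemize}
\item From $W^*W+(1-c^2)P_{\langle e_0\rangle}=I$ you get $T^*T=I\oplus 0$.
\item From $P_{\langle e_0\rangle}W=0$ you get $T^n=W^n\oplus 0$, so $[T^n,T^*T]=0$.
\item However, $W^n(W^*)^nW^ne_0=c^3e_n\neq ce_n=(W^*)^nW^nW^ne_0$, so $T^n$ is not quasinormal.
\end{itemize}

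\textbf{The kernel hypothesis.} Your closing remark that the forward direction then ``reduces to the normality argument above'' is too quick. That argument consumed quasinormality of $T^n$ as input, which is precisely what is unavailable in the forward direction. The paper instead writes $T$ in block form on $\overline{\operatorname{ran}T^n}\oplus\ker(T^*)^n$ (Proposition \ref{MR}). It uses the kernel inclusion to force the off-diagonal entry $X=0$ and then cancels $\widetilde T$ (Theorem \ref{coromain}).
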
	

In view of this theorem, the condition \(T^n\) is quasinormal in Question \ref{question2} appears to be superfluous. In fact, however, we give an example of an \(n\)-power quasinormal operator \(T\) in infinite dimensions for which \(T^n\) is not quasinormal (Example \ref{pnnotimplynq}). This motivates us to study the relationship between an \(n\)-power quasinormal operator \(T\) and the quasinormal operator \(T^n\). In particular, in Theorem \ref{mainthm} we prove that, in finite dimensions, the following are equivalent: \(T\) is \(n\)-power quasinormal; \(T^n\) is quasinormal; \(T^n\) is normal; \(T\) is \(n\)-power normal. It is also well known that quasinormal matrices and normal matrices coincide in finite dimensions. Therefore, by taking a non-normal nilpotent matrix, we obtain a negative answer to Question \ref{question2}. From the finite-dimensional perspective, another natural question arises by replacing ``\(T^n\) is quasinormal'' in Question \ref{question1} with ``\(T\) is \(n\)-power quasinormal'', leading to the following question.

\begin{question}\label{question3}
	If \(T\) is paranormal and \(T\) is \(n\)-power quasinormal for some \(n \in \mathbb N\), does it follow that \(T\) is quasinormal?
\end{question}

Our Theorem \ref{main1} gives an affirmative answer to the above question. Using this result, we also characterize when \(T\) is quasinormal under the conditions of Question \ref{question2} (Corollary \ref{powerquasi}). Stanković and Kubrusly's Question \ref{question1} also arises naturally under Corollary \ref{powerquasi}; we give an affirmative answer to their question at the end of the paper.

The organization of this paper is as follows. In Section 2, we give a counterexample to the first part of Theorem \ref{kolee}, characterize when an \(n\)-power quasinormal operator is \(n\)-power normal (Theorem \ref{coromain}), and use this result to establish the relationship (Theorem \ref{mainthm}) between \(n\)-power quasinormal operators \(T\), quasinormal operators \(T^n\), and \(n\)-power normal operators \(T\), thereby giving the correct formulation of Theorem \ref{kolee}. Some known results are also strengthened. At the end of this section, we give a new proof of Theorem \ref{svep} \cite[Theorem 3.2]{ko-filomat-2023}, which avoids the use of \cite[Lemma 3.1]{ko-filomat-2023} and thereby removes the gap in the original argument.

In Section 3, using the matrix representation (Proposition \ref{MR}) of \(n\)-power quasinormal operators from Section 2, we characterize all positive integers \(n\) for which \(T\) is \(n\)-power quasinormal. This is of independent interest. As corollaries, we extend several results of Sid Ahmed \cite{ahmed-bmaa-2011}.

In the final section, we give a characterization of \(n\)-power quasinormal operators using the \(\lambda\)-Aluthge transform. As a corollary, we obtain a clear necessary and sufficient condition for an \(n\)-power quasinormal operator \(T\) to have \(T^n\) quasinormal. Furthermore, involving the $\lambda$-Aluthge transform, we characterize when \(T\) is quasinormal under the conditions of Question \ref{question2}. Questions \ref{question1} and \ref{question3} are also answered affirmatively. Some examples are provided to show that our results are sharp.

	\section{Relations among $n$-power quasinormality, $n$-power normality and quasinormality of powers}
We begin this part with a counterexample to the first part of Theorem \ref{kolee}. Let $S=\{s_{1},s_{2},s_{3},\cdots\}\subseteq\mathcal{H}$. Denote by $P_{\langle s_{1},s_{2},s_{3},\cdots\rangle}$ the orthogonal projection of $\mathcal{H}$ onto the closed linear span of $S$. 
 Denote by $\mathbb{N}_{0}$ the set of nonnegative integers, and by $l^2(\mathbb{N}_{0})$ the Hilbert space of square‑summable sequences, with orthonormal basis $\{e_{k}\}_{k\in\mathbb{N}_{0}}$. Let $\alpha=\{\alpha_{0},\alpha_{1},\alpha_{2},\cdots\}$ be a bounded sequence of positive real numbers. The weighted shift operator $W_{\alpha}$ on $l^2(\mathbb{N}_{0})$ with weight sequence $\alpha$ is defined by $W_{\alpha}e_{k}=\alpha_{k}e_{k+1}$ for $k\in\mathbb{N}_{0}$. The notation $\operatorname{shift}(\alpha_{0},\alpha_{1},\alpha_{2},\cdots)$ is also used for this operator, as it directly indicates the weights \cite[Problem 89]{Halmos-GTM-1982}.
 \begin{example}\label{pnnotimplynq}
 	Let $n\ge2$, and let $\mathcal{K}=l^2(\mathbb N_0)$. Fix $0<c<1$ and set $W=\operatorname{shift}(c,1,1,1,\cdots)$. Define the operator
 	\begin{equation}\label{poma3}
 		T=\begin{bmatrix}
 			W&0\\\sqrt{1-c^2}P_{\langle e_{0}\rangle}&0
 		\end{bmatrix}
 	\end{equation}
 	on $\mathcal{H}=\mathcal{K}\oplus\mathcal{K}$. Then $T$ is $n$-power quasinormal, but $T^n$ is not quasinormal. 
 	
 	Indeed, since $W^\ast e_{k}=\begin{cases}
 		0&k=0\\ce_{k-1}&k=1\\e_{k-1}&k\ge2
 	\end{cases}$, we have $W^\ast W+(1-c^2)P_{\langle e_{0}\rangle}=I$, and hence 
 	\begin{equation}\label{aluthgema}
 		T^\ast T=	\begin{bmatrix}
 			I&0\\
 			0&0
 		\end{bmatrix}.
 	\end{equation}
 Moreover, $P_{\langle e_{0}\rangle}W=0$, so 
 	\begin{equation}\label{poma}
 		T^n=
 		\begin{bmatrix}
 			W^n&0\\
 			0&0
 		\end{bmatrix}.
 	\end{equation} It follows immediately that $
 	T^n(T^*T)=(T^*T)T^n$; hence $T$ is $n$-power quasinormal. On the other hand, a straightforward induction on $n$ gives
 	\begin{equation}\label{poma1}
 		W^ne_k=\begin{cases}
 			ce_{n}&k=0\\e_{k+n}&k\ge1
 		\end{cases},\quad\text{and}\quad (W^\ast)^nW^ne_{k}=\begin{cases}
 			c^2e_{0}&k=0\\e_{k}&k\ge1
 		\end{cases}.
 	\end{equation}
 	Therefore, we obtain $$T^n(T^\ast)^nT^n\begin{pmatrix}
 		e_{0}\\0
 	\end{pmatrix}=\begin{pmatrix}
 		W^n(W^\ast)^nW^ne_{0}\\0
 	\end{pmatrix}=\begin{pmatrix}
 		c^3e_{n}\\0
 	\end{pmatrix},$$
 	and
 	$$(T^\ast)^nT^nT^n\begin{pmatrix}
 		e_{0}\\0
 	\end{pmatrix}=\begin{pmatrix}
 		(W^\ast)^nW^nW^ne_{0}\\0
 	\end{pmatrix}=\begin{pmatrix}
 		ce_{n}\\0
 	\end{pmatrix}.$$
 	Since $0<c<1$, we have $c^3\neq c$, and thus $T^n(T^\ast)^nT^n\ne (T^\ast)^nT^nT^n$. Consequently, $T^n$ is not quasinormal. 
 		The proof is complete.
 \end{example}
 
 	The next example is a slight modification of the preceding one. It shows that the condition $\ker(T^\ast)^n\subseteq\ker T^n$ in the latter part of Ko and Lee’s result is not merely technical. Furthermore, the fact that  $T^n$ is quasinormal for all $n\ge2$ does not generally imply that $T$ is $m$-power quasinormal for some $m\in\mathbb{N}$. 
 
 \begin{example}\label{pqsetempty}
 	Let $\mathcal{K}=l^2(\mathbb N_0)$. Define the operator $T=\begin{bmatrix}
 		W&0\\P_{\langle e_{0}\rangle}&0
 	\end{bmatrix}$ on $\mathcal{H}=\mathcal{K}\oplus\mathcal{K}$, where $W=\operatorname{shift}(1,1,1,\cdots)$. Then $T^n$ is quasinormal for every $n\ge2$, with $\ker (T^\ast)^n\not\subseteq\ker T^n$, but $T$ is not $m$-power quasinormal for any $m\in\mathbb{N}$.
 	
 	Indeed, since \begin{equation}\label{poma5}
 		P_{\langle e_{0}\rangle}W=0,
 	\end{equation}
 	we have $T^2=\begin{bmatrix}
 		W^2&0\\0&0
 	\end{bmatrix}$. A straightforward induction gives, for $n\ge2$,
 	\begin{equation}\label{poma4}
 		T^n=\begin{bmatrix}
 			W^n&0\\
 			0&0
 		\end{bmatrix}.
 	\end{equation}
 	Since $W^\ast W=I$, we get $(W^n)^\ast W^n=I$ and hence $(T^n)^\ast T^n=\begin{bmatrix}
 		I&0\\0&0
 	\end{bmatrix}$. Thus $T^n$ is quasinormal for every $n\ge2$. Next, set $\xi=\begin{pmatrix}
 		e_{0}\\0
 	\end{pmatrix}$. From \eqref{poma4}, a direct computation shows that $\xi\in\ker (T^\ast)^n$ but $\xi\not\in\ker T^n$. Therefore $\ker (T^\ast)^n\not\subseteq\ker T^n$ for all $n\ge2$. To see that $T$ is not $m$-power quasinormal for any $m\in\mathbb{N}$, note that $T^\ast T=
 	\begin{bmatrix}
 		I+P_{\langle e_{0}\rangle} & 0 \\
 		0 & 0
 	\end{bmatrix}$.
 	A direct calculation using \eqref{poma5} gives
 	$
 	[T,T^\ast T]=
 	\begin{bmatrix}
 		W P_{\langle e_{0}\rangle} & 0 \\
 		2P_{\langle e_{0}\rangle} & 0
 	\end{bmatrix}
 	\ne 0$. Moreover, for \(m\ge2\), using \eqref{poma5} and \eqref{poma4}, we have
 	$$
 	[T^m,T^\ast T]
 	\begin{pmatrix} e_0 \\ 0 \end{pmatrix}
 	=\begin{bmatrix}
 		W^mP_{\langle e_{0}\rangle}-P_{\langle e_{0}\rangle}W^m&0\\0&0
 	\end{bmatrix}\begin{pmatrix}
 		e_{0}\\0
 	\end{pmatrix}=\begin{pmatrix} e_m \\ 0 \end{pmatrix}
 	\ne 0.$$
 	Thus we get the desired result.
 \end{example}

 The following proposition for weighted shift operators also shows that the condition $\ker (T^\ast)^n\subseteq\ker T^n$ in Theorem \ref{kolee} is not necessary.
 
 \begin{proposition}
 	Let $n\in\mathbb{N}$ and $T=\operatorname{shift}(\alpha_{0},\alpha_{1},\alpha_{2},\cdots)$. Then $T$ is $n$-power quasinormal if and only if $T^n$ is quasinormal.
 \end{proposition}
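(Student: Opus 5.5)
Since all weights are positive, the proof reduces to comparing the diagonal operators $T^\ast T$ and $(T^n)^\ast T^n$ with the shifts $T^n$. Throughout we use the formulas
\[
T e_k=\alpha_k e_{k+1},\qquad T^n e_k=\alpha_k\alpha_{k+1}\cdots\alpha_{k+n-1}\,e_{k+n}.
\]
They give
\[
T^\ast T e_k=\alpha_k^2 e_k,\qquad (T^n)^\ast T^n e_k=\beta_k^2 e_k,\quad\text{where } \beta_k:=\prod_{j=0}^{n-1}\alpha_{k+j}>0.
\]

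\emph{Step 1: $n$-power quasinormality.} We have
\[
[T^n,T^\ast T]e_k=\beta_k(\alpha_k^2-\alpha_{k+n}^2)\,e_{k+n}.
\]
Since $\beta_k>0$, $T$ is $n$-power quasinormal if and only if $\alpha_{k+n}=\alpha_k$ for all $k\in\mathbb N_0$. That is, the weight sequence is $n$-periodic.

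\emph{Step 2: quasinormality of $T^n$.} $T^n$ is quasinormal if and only if it commutes with $(T^n)^\ast T^n$. Computing,
\[
[T^n,(T^n)^\ast T^n]e_k=\beta_k(\beta_k^2-\beta_{k+n}^2)\,e_{k+n}.
\]
So $T^n$ is quasinormal if and only if $\beta_{k+n}=\beta_k$ for all $k$.

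\emph{Step 3: comparing the two conditions.} The direction ($\Rightarrow$) is immediate: $n$-periodic weights give $\beta_{k+n}=\beta_k$.

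For ($\Leftarrow$), note that $\beta_{k+1}/\beta_k=\alpha_{k+n}/\alpha_k$. Suppose $\beta_{k+n}=\beta_k$ for all $k$. Then the sequence of ratios $r_k:=\alpha_{k+n}/\alpha_k$ satisfies
\[
r_{k+n}=\frac{\beta_{k+n+1}}{\beta_{k+n}}=\frac{\beta_{k+1}}{\beta_k}=r_k,
\]
so $r$ is $n$-periodic. Moreover $\prod_{j=0}^{n-1} r_{k+j}=\beta_{k+n}/\beta_k=1$.

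Now fix a residue class $k_0+n\mathbb N_0$. There $\alpha_{k_0+mn}=\alpha_{k_0}\,r_{k_0}^m$. Boundedness of the weights (needed for $T\in\mathbb B(\mathcal H)$) forces $r_{k_0}\le1$ for every $k_0$. Combined with $\prod_{j=0}^{n-1} r_{j}=1$, this gives $r_{k_0}=1$ for all $k_0$, i.e.\ $\alpha_{k+n}=\alpha_k$. By Step 1, $T$ is $n$-power quasinormal.

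The main obstacle is this last step: quasinormality of $T^n$ only controls products of $n$ consecutive weights. The argument must recover periodicity of the individual weights, and it does so through boundedness of $(\alpha_k)$. Note that this boundedness is exactly what fails in the spirit of Example~\ref{pnnotimplynq}, which is why that example leaves the class of weighted shifts.
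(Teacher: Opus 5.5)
Your proof is correct and is essentially the paper's argument. The paper takes the two weight conditions from Ko and Lee's Proposition 3.7, which you derive directly from $[T^n,T^\ast T]e_k$ and $[T^n,(T^n)^\ast T^n]e_k$; both then use the ratios $r_k=\alpha_{k+n}/\alpha_k$, their $n$-periodicity, the bound $r_k\le 1$ from boundedness of the weights, and $r_0r_1\cdots r_{n-1}=1$. Your closing remark is misdirected: the operator in Example~\ref{pnnotimplynq} is bounded and violates ``$n$-power quasinormal $\Rightarrow$ $T^n$ quasinormal'', which for weighted shifts is the trivial direction, so boundedness does not explain that example.
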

 \begin{proof}
 	Note that $\ker (T^\ast)^n\not\subseteq\ker T^n$. Indeed, a computation similar to that in Example \ref{pnnotimplynq} yields $$\ker (T^\ast)^n=\Bigg\{\sum\limits_{k=0}^{n-1}c_{k}e_{k}:c_{k}\in\mathbb{C}\Bigg\}\quad\text{and}\quad\ker T^n=\{0\}.$$
 	By \cite[Proposition 3.7]{ko-filomat-2023}, $T$ is
 	$n$-power quasinormal if and only if
 	\begin{equation}\label{pq1}
 		\alpha_{k+n}=\alpha_{k},
 		\quad k\geq0,
 	\end{equation}
 	and $T^n$ is quasinormal if and only if
 	\begin{equation}\label{pq2}
 		\prod_{j=0}^{n-1}\alpha_{k+j}
 		=
 		\prod_{j=0}^{n-1}\alpha_{k+n+j},
 		\quad k\geq0.
 	\end{equation}
 	
 	\textit{Necessity}. Suppose that $T$ is $n$-power quasinormal. The desired result is immediate from \eqref{pq1} and \eqref{pq2}. 
 	
 	\textit{Sufficiency}. Suppose that $T^n$ is quasinormal. For $k\ge0$, define $r_k=\frac{\alpha_{k+n}}{\alpha_k}$. Since $\alpha_{k}>0$, it suffices to prove $r_k=1$ for all $k\ge0$. Indeed, \eqref{pq2} is equivalent to
 	\begin{equation}\label{pq4}
 		\prod_{j=0}^{n-1}r_{k+j}=1,
 		\qquad k\geq0.
 	\end{equation}
 	Hence, $$r_kr_{k+1}\cdots r_{k+n-1}=1\quad\text{and}\quad r_{k+1}r_{k+2}\cdots r_{k+n}=1,$$
 	from which we obtain 
 	\begin{equation}\label{pq3}
 		r_{k+n}=r_k.
 	\end{equation}
 	Now, for every $k\geq0$ and $m\ge0$, by the above equation we have
 	$$\alpha_{k+mn}=\frac{\alpha_{k+mn}}{\alpha_{k+(m-1)n}}\frac{\alpha_{k+(m-1)n}}{\alpha_{k+(m-2)n}}\cdots\frac{\alpha_{k+n}}{\alpha_{k}}\alpha_{k}=\alpha_{k}\prod_{j=0}^{m-1}r_{k+jn}=\alpha_{k}r_{k}^m.$$
 	Since $T$ is bounded, its weight sequence $\{\alpha_{k}\}_{k\ge0}$ is bounded. Therefore,
 	\begin{equation}\label{pq5}
 		r_k\leq1。
 	\end{equation}
 	On the other hand, taking $k=0$ in \eqref{pq4} yields $r_0r_1\cdots r_{n-1}=1$. It follows from \eqref{pq5} that $r_0=r_1=\cdots=r_{n-1}=1$. Then by \eqref{pq3}, we conclude that $r_{k}=1$ for $k\ge0$, as desired.
 \end{proof}
 
Note that in finite-dimensional spaces, $T^n$ is normal if and only if $T^n$ is quasinormal. To better understand the relationship between the quasinormal operator $T^n$ and the $n$-power quasinormal operator $T$, we first prove that $T^n$ is normal if and only if $T$ is $n$-power quasinormal and $\ker (T^\ast)^n\subseteq\ker T^n$ (Theorem \ref{coromain}). For this, we need the following operator matrix form. Denote by $\overline{A}$ the closure of a subset $A$ in a metric space.
	\begin{proposition}\label{MR}
		Let $n\in\mathbb{N}$ and $T\in\mathbb{B}(\mathcal{H})$. If $T$ is $n$-power quasinormal, then $T$ has the matrix representation 
		\begin{equation}\label{mr5}
			T=\begin{bmatrix}
				\widetilde{T}&X\\0&N
			\end{bmatrix}
		\end{equation}
		with respect to the space decomposition $\mathcal{H}=\overline{\operatorname{ran}T^n}\oplus\ker (T^\ast)^n$, where $\widetilde{T}$ is an injective $n$-power quasinormal operator satisfying $\widetilde{T}^\ast X=0$, and $N$ is nilpotent with $N^n=0$.
	\end{proposition}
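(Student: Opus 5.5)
The plan is to take $M=\overline{\operatorname{ran}T^n}$ and read everything off from the commutation relation $T^n(T^\ast T)=(T^\ast T)T^n$.

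\emph{Triangular form and the entry $N$.} Since $T$ commutes with $T^n$, we have $T\operatorname{ran}T^n=\operatorname{ran}T^{n+1}\subseteq\operatorname{ran}T^n$. Hence $M$ is invariant under $T$. Its orthogonal complement is $M^\perp=\ker (T^\ast)^n$. So with respect to $\mathcal H=M\oplus M^\perp$, $T$ has the upper triangular form \eqref{mr5}, with $\widetilde T=T|_M$ and $N=P_{M^\perp}T|_{M^\perp}$. Because $M$ is invariant, the $(2,2)$ entry of $T^n$ is $N^n$. On the other hand, it equals $P_{M^\perp}T^n|_{M^\perp}$, which is $0$ since $\operatorname{ran}T^n\subseteq M$. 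Thus $N^n=0$. Likewise the $(1,1)$ entry of $T^n$ is $\widetilde T^{\,n}$, so $\widetilde T^{\,n}=T^n|_M$.

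\emph{The relation $\widetilde T^\ast X=0$ and $n$-power quasinormality of $\widetilde T$.} From $T^\ast T\,T^nx=T^n\,T^\ast T x$ we get $T^\ast T(\operatorname{ran}T^n)\subseteq\operatorname{ran}T^n$. By continuity, $M$ is invariant under the self-adjoint operator $T^\ast T$, hence reduces it. Computing with \eqref{mr5} gives
$$T^\ast T=\begin{bmatrix}\widetilde T^\ast\widetilde T&\widetilde T^\ast X\\ X^\ast\widetilde T&X^\ast X+N^\ast N\end{bmatrix}.$$
Since $M$ reduces $T^\ast T$, the off-diagonal entries vanish, so $\widetilde T^\ast X=0$. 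It also follows that $T^\ast T|_M=\widetilde T^\ast\widetilde T$. Restricting the identity $[T^n,T^\ast T]=0$ to the common invariant subspace $M$ then yields $[\widetilde T^{\,n},\widetilde T^\ast\widetilde T]=0$, i.e.\ $\widetilde T$ is $n$-power quasinormal.

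\emph{Injectivity of $\widetilde T$.} I expect this to be the only step needing more than bookkeeping. Let $P$ be the orthogonal projection onto $\ker T^\ast T=\ker T$. This is the spectral projection $\chi_{\{0\}}(T^\ast T)$. By the spectral theorem, any operator commuting with $T^\ast T$ commutes with all its Borel functions. Hence $T^n$ commutes with $P$, and we get $PT^n=T^nP=T^{n-1}(TP)=0$, using $n\ge1$. Therefore $\operatorname{ran}T^n\perp\ker T$, and so $M\perp\ker T$. Now if $x\in M$ and $\widetilde Tx=Tx=0$, then $x\in M\cap\ker T=\{0\}$. So $\widetilde T$ is injective, which completes the plan.
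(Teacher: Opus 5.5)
Your proof is correct and follows the paper's argument step by step: invariance of $\overline{\operatorname{ran}T^n}$ under $T$ and under $T^\ast T$ gives the triangular form, $N^n=0$, $\widetilde T^\ast X=0$ and the $n$-power quasinormality of $\widetilde T$, and injectivity comes from $\overline{\operatorname{ran}T^n}\perp\ker T$. The only cosmetic difference is that the paper cites \cite[Proposition 2.6]{xu-aot-2018} to get $[T^n,P_{\overline{\operatorname{ran}(|T|)}}]=0$, whereas you obtain the equivalent fact $[T^n,P_{\ker T}]=0$ directly from the Borel functional calculus of $T^\ast T$.
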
	
	\begin{proof}
		Since $T(\operatorname{ran}T^n)\subseteq\operatorname{ran}T^n$ and $T$ is bounded, the subspace $\overline{\operatorname{ran}T^n}$ is invariant under $T$. Using $\overline{\operatorname{ran}T^n}^\perp=\ker (T^\ast)^n$, we obtain the block form
		\begin{equation}\label{mr3}
			T=
			\begin{bmatrix}
				\widetilde{T} & X\\
				0 & N
			\end{bmatrix}
		\end{equation}
		with respect to the decomposition $\mathcal{H}=\overline{\operatorname{ran}T^n}\oplus\ker (T^\ast)^n$. Consequently,
		\begin{equation}\label{mr}
			T^\ast T
			=
			\begin{bmatrix}
				\widetilde{T}^\ast\widetilde{T}
				&
				\widetilde{T}^\ast X
				\\[1mm]
				X^\ast\widetilde{T}
				&
				X^\ast X+N^\ast N
			\end{bmatrix}.
		\end{equation}
		Now, for any $y=T^nx\in\operatorname{ran}T^n$, we have $$ (T^\ast T)y
		=
		(T^\ast T)T^nx
		=
		T^n(T^\ast T)x
		\in
		\operatorname{ran}T^n,$$
		where the second equality holds since $T$ is $n$-power quasinormal. Hence, $\overline{\operatorname{ran}T^n}$ is invariant under $T^\ast T$, which implies the off-diagonal entries 
		\begin{equation}\label{mr1}
			\widetilde{T}^\ast X=X^\ast\widetilde{T}=0
		\end{equation}
		in \eqref{mr}. On the other hand, a straightforward induction gives
		\begin{equation}\label{mr2}
			T^k=\begin{bmatrix}
				\widetilde{T}^k&\sum\limits_{i=0}^{k-1}\widetilde{T}^{k-1-i}XN^{i}\\0&N^k
			\end{bmatrix}
		\end{equation} 
		for all $k\in\mathbb{N}$. Then we get $N^n=P_{\overline{\operatorname{ran}T^n}^\perp}T^n|_{\ker (T^\ast)^n}=0$. From \eqref{mr}, \eqref{mr1} and \eqref{mr2}, the subspace $\overline{\operatorname{ran}T^n}$ is invariant under both $T^\ast T$ and $T^n$. Therefore,
		$$(T^\ast T)|_{\overline{\operatorname{ran}T^n}}=\widetilde{T}^\ast\widetilde{T},\quad T^n|_{\overline{\operatorname{ran}T^n}}=\widetilde{T}^n.$$ Restricting the equality $T^n(T^\ast T)=(T^\ast T)T^n$ to $\overline{\operatorname{ran}T^n}$ yields $
		\widetilde{T}^{\,n}
		\bigl(\widetilde{T}^\ast \widetilde{T}\bigr)
		=
		\bigl(\widetilde{T}^\ast \widetilde{T}\bigr)
		\widetilde{T}^{\,n}$, so $\widetilde{T}$ is $n$-power quasinormal. It remains to show that $\widetilde{T}$ is injective. By $[T^n,T^\ast T]=0$ and \cite[Proposition 2.6]{xu-aot-2018}, we have $[T^n,P_{\overline{\operatorname{ran}(|T|)}}]=0$. Hence,  $$T^n=T^n(P_{\overline{\operatorname{ran}(|T|)}}+P_{\ker |T|})=T^n(P_{\overline{\operatorname{ran}(|T|)}}+P_{\ker T})=P_{\overline{\operatorname{ran}(|T|)}}T^n,$$ which gives $$\overline{\operatorname{ran}T^n}\subseteq\overline{\operatorname{ran}(|T|)}=(\ker|T|)^\perp=(\ker T)^\perp.$$ Suppose that $x\in\overline{\operatorname{ran}T^n}\subseteq(\ker T)^\perp$ and $\widetilde{T}x=0$. By \eqref{mr3}, we have $Tx=0$, and
		thus $x\in\ker T$. It follows that $
		x\in\ker T\cap(\ker T)^{\perp}=\{0\}$. Therefore $\widetilde{T}$ is injective. The proof is complete.
	\end{proof}
	
	Since every $n$-power normal operator is $n$-power quasinormal, the following result shows that $T$ is $n$-power normal if and only if it is $n$-power quasinormal and $X=P_{\overline{\operatorname{ran}T^n}}T|_{\ker(T^\ast)^n}=0$ in \eqref{mr5}. An operator $T$ is called a \textit{quasiaffinity} if it is injective and has dense range.
	
	\begin{theorem}\label{coromain}
		Let $n\in\mathbb N$, and let $T\in\mathbb{B}(\mathcal H)$ be an
		$n$-power quasinormal operator. Then the following conditions are
		equivalent:
		\begin{itemize}
			\item[(i)]
			$\overline{\operatorname{ran}T^n}=\overline{\operatorname{ran}T^{n+1}}$.
			
			\item[(ii)]
			$\ker (T^\ast)^n\subseteq\ker T^n$.
			
			\item[(iii)]
			$P_{\overline{\operatorname{ran}T^n}}
			T\big|_{\ker (T^\ast)^n}=0$.
			
			\item[(iv)]
			$T=\widetilde T\oplus N$, where $\widetilde T$ and $N$ are as in \eqref{mr5}; moreover, $\widetilde T$ is an $n$-power normal quasiaffinity.
			
			\item[(v)]
			$T^n$ is normal.
		\end{itemize}
	\end{theorem}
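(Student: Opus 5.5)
Throughout I use the block form \eqref{mr5} from Proposition \ref{MR}, with $\mathcal M=\overline{\operatorname{ran}T^n}$, $\mathcal M^\perp=\ker(T^\ast)^n$, $\widetilde T^\ast X=0$, $\widetilde T$ injective, $N^n=0$, and the power formula \eqref{mr2}. The implications will be proved in the cycle (i)$\Rightarrow$(iii)$\Rightarrow$(iv)$\Rightarrow$(v)$\Rightarrow$(ii)$\Rightarrow$(i).

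(i)$\Rightarrow$(iii). From \eqref{mr2} with $k=n$ and $N^n=0$ we get
\[
\operatorname{ran}T^{n+1}=\operatorname{ran}(TT^n)=T(\operatorname{ran}T^n)\subseteq \widetilde T\mathcal M .
\]
Hence (i) gives $\mathcal M=\overline{\operatorname{ran}\widetilde T}$. Since $\widetilde T^\ast X=0$, we have $\operatorname{ran}X\subseteq\ker\widetilde T^\ast=(\operatorname{ran}\widetilde T)^\perp\cap\mathcal M=\{0\}$. So $X=0$, which is (iii).

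(iii)$\Rightarrow$(iv). With $X=0$ we get $T=\widetilde T\oplus N$. It remains to show that $\widetilde T$ is an $n$-power normal quasiaffinity. Now $T^n=\widetilde T^n\oplus0$, so $\operatorname{ran}\widetilde T^n$ is dense in $\mathcal M$, and hence $\widetilde T$ has dense range. Injectivity comes from Proposition \ref{MR}. The key point is the normality of $\widetilde T^n$. Set $A=\widetilde T^\ast\widetilde T$, which commutes with $S=\widetilde T^n$. Then $S^\ast S=(\widetilde T^\ast)^{n}\widetilde T^{n}$, and one would like to write this as $A^n$. That identity requires quasinormality of $\widetilde T$, which we do not have, so I would argue differently. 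Using $[S,A]=0$ and taking adjoints gives $[S^\ast,A]=0$. Then
\[
\widetilde T^\ast(\widetilde T S^\ast)=AS^\ast=S^\ast A=S^\ast\widetilde T^\ast\widetilde T=\widetilde T^\ast(S^\ast\widetilde T).
\]
Since $\widetilde T$ has dense range, $\widetilde T^\ast$ is injective, and therefore $\widetilde T S^\ast=S^\ast\widetilde T$. Iterating this $n$ times gives $[S,S^\ast]=0$. (Equivalently, $[\widetilde T^n,\widetilde T^\ast]=0$, which is $n$-power normality.) This cancellation step is the main obstacle, and it is exactly where the dense range of $\widetilde T$ is used.

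(iv)$\Rightarrow$(v). This is immediate: $T^n=\widetilde T^n\oplus N^n=\widetilde T^n\oplus 0$, and $\widetilde T^n$ is normal.

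(v)$\Rightarrow$(ii). For normal $T^n$ we have $\ker T^n=\ker(T^n)^\ast=\ker(T^\ast)^n$, which gives (ii).

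(ii)$\Rightarrow$(i). For normal $T^n$ this is standard, but here we only assume (ii), so a different route is needed. Let $y\in\mathcal M^\perp=\ker(T^\ast)^n$. By (ii), $T^ny=0$, so from \eqref{mr2}
\[
0=P_{\mathcal M}T^ny=\sum_{i=0}^{n-1}\widetilde T^{\,n-1-i}XN^iy .
\]
I want to show $X=0$ by applying $\widetilde T^\ast$-type cancellations. For $n=1$ the identity reads $Xy=0$ directly. For general $n$, I would take the $i=n-1$ term, which is $XN^{n-1}y$. Every other term lies in $\operatorname{ran}\widetilde T$, while $\operatorname{ran}X\perp\operatorname{ran}\widetilde T$. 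Hence $XN^{n-1}y=0$ and $\sum_{i<n-1}\widetilde T^{\,n-1-i}XN^iy=0$. Factoring out an injective $\widetilde T$ and repeating the same argument inductively kills all terms. In particular the $i=0$ term gives $Xy=0$. So $X=0$.

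This shows (ii)$\Rightarrow$(iii), and (iii)$\Rightarrow$(i) then follows from $T^{n+1}=\widetilde T^{n+1}\oplus0$ together with the dense range of $\widetilde T$. The argument for that dense range is the one in the (iii)$\Rightarrow$(iv) step, which does not use (i). So the cycle should be closed as (ii)$\Rightarrow$(iii)$\Rightarrow$(i), and the same orthogonality-peeling argument is the crux here.
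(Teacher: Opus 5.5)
Your proof is correct, and its core ingredients are the paper's. You use the block form \eqref{mr5}. You use the fact that $\widetilde T^\ast X=0$ forces $X=0$ once $\operatorname{ran}\widetilde T$ is dense in $\overline{\operatorname{ran}T^n}$. And you use a cancellation that turns $[\widetilde T^{\,n},\widetilde T^\ast\widetilde T]=0$ into $[\widetilde T^{\,n},\widetilde T^\ast]=0$: the paper cancels $\widetilde T$ on the right using its dense range, while you cancel the injective $\widetilde T^\ast$ on the left, which is the adjoint of the same step.

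The genuine differences are the order of the cycle and the step (ii)$\Rightarrow$(iii).

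\textbf{The cycle.} The paper proves (v)$\Rightarrow$(i) directly. It uses $\ker T^n=\ker(T^\ast)^n$ and the $T$-invariance of $\ker T^n$ to get $X=0$, then uses dense range. Your (v)$\Rightarrow$(ii) is the one-line fact $\ker T^n=\ker (T^n)^\ast$, after which you return to (i) through (iii).

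\textbf{The step (ii)$\Rightarrow$(iii).} The paper first writes $T^n=\widetilde T^{\,n}\oplus 0$ and deduces that $\widetilde T$ has dense range. It then kills $X$ through the $(2,1)$ entry $X^\ast\widetilde T^{\,n+1}=0$ of $[T^n,T^\ast T]$. Your peeling argument avoids dense range entirely. It uses only $\operatorname{ran}X\perp\operatorname{ran}\widetilde T$ and the injectivity of $\widetilde T$, and as a by-product gives $XN^iy=0$ for every $i$.

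Both routes are valid, but both are longer than needed. Under (ii), the vanishing of the $(1,2)$ block of $T^n$ already gives $\operatorname{ran}T^n=\operatorname{ran}\widetilde T^{\,n}\subseteq\operatorname{ran}\widetilde T$. Hence $\overline{\operatorname{ran}\widetilde T}=\overline{\operatorname{ran}T^n}$, and $X=0$ follows exactly as in your (i)$\Rightarrow$(iii).
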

	\begin{proof}
		We shall prove the implications in the order
		$${\rm (i)}\Longrightarrow{\rm (ii)}
		\Longrightarrow{\rm (iii)}
		\Longrightarrow{\rm (iv)}
		\Longrightarrow{\rm (v)}
		\Longrightarrow{\rm (i)}.$$	
		(i) $\Rightarrow$ (ii). From \eqref{mr5} we have
		\begin{equation}\label{coro2.3eq1}
			\overline{\operatorname{ran}T^{n+1}}=\overline{T(\operatorname{ran}T^n)}=\overline{\widetilde T(\operatorname{ran}T^n)}=\overline{\operatorname{ran}\widetilde T}.
		\end{equation}
		The assumption then yields $\overline{\operatorname{ran}\widetilde T}=\overline{\operatorname{ran}T^n}$, so $\ker\widetilde T^*=\{0\}$. Since $\widetilde T^*X=0$, we obtain $X=0$, hence $T=\widetilde T\oplus N$. For any $x\in\ker (T^\ast)^n$, we have $T^nx=N^nx=0$ because $N^n=0$. Therefore $\ker(T^\ast)^n\subseteq\ker T^n$.
		
		(ii) $\Rightarrow$ (iii). From \eqref{mr5} and the assumption, we have $T^n={\widetilde{T}}^n\oplus0$, which yields 
		\begin{equation}\label{coro1.2}
			\overline{\operatorname{ran}T^n}=\overline{\operatorname{ran}{\widetilde T}^{n}}.
		\end{equation}
		Together with the inclusions $\operatorname{ran}{\widetilde{T}}^n\subseteq\operatorname{ran}\widetilde{T}\subseteq\overline{\operatorname{ran}T^n}$, we obtain
		\begin{equation}\label{coro1.3}
			\overline{\operatorname{ran}\widetilde{T}}=\overline{\operatorname{ran}T^n}.
		\end{equation}
		On the other hand, from $T^nT^\ast T=T^\ast TT^n$, a direct computation gives $\begin{bmatrix}
			\times&\times\\0&\times
		\end{bmatrix}=\begin{bmatrix}
			\times&\times\\X^\ast{\widetilde{T}}^{n+1}&\times
		\end{bmatrix}$, so that
		\begin{equation}\label{coro1.4}
			X^\ast{\widetilde{T}}^{n+1}=0.
		\end{equation}
		Moreover, from \eqref{coro1.2},  $$\operatorname{ran}\widetilde{T}=\widetilde{T}(\overline{\operatorname{ran}T^n})=\widetilde{T}(\overline{\operatorname{ran}{\widetilde{T}}^n})\subseteq\overline{\widetilde T(\operatorname{ran}\widetilde T^{n})}=\overline{\operatorname{ran}{\widetilde{T}}^{n+1}}\subseteq\overline{\operatorname{ran}\widetilde{T}}.$$ Consequently, \eqref{coro1.3} yields $\overline{\operatorname{ran}{\widetilde{T}}^{n+1}}=\overline{\operatorname{ran}T^n}$. Applying \eqref{coro1.4} then gives $X^\ast=0$ and thus $X=0$. Therefore $P_{\overline{\operatorname{ran}T^n}}
		T\big|_{\ker(T^\ast)^n}=0$.
		
		(iii) $\Rightarrow$ (iv).
		From $X=P_{\overline{\operatorname{ran}T^n}}
		T|_{\ker (T^\ast )^n}$ and \eqref{mr5} we have $T=\widetilde{T}\oplus N$. Hence $T^n = \widetilde{T}^n \oplus 0$, and consequently as in the proof of (ii) $\Rightarrow$ (iii) we obtain \eqref{coro1.3}; that is, $\widetilde{T}$ has dense range. Since $T$ is $n$-power quasinormal, we have
		$$\widetilde T^n\widetilde T^\ast\widetilde{T}=\widetilde T^\ast\widetilde{T}\widetilde T^n=\widetilde T^\ast \widetilde T^n\widetilde{T}.$$
		The denseness of the range of $\widetilde{T}$ allows right cancellation of $\widetilde{T}$, which yields
		$$\widetilde{T}^n\widetilde T^\ast=\widetilde T^\ast \widetilde{T}^n.$$
		Applying the above equality repeatedly gives $\widetilde{T}^n(\widetilde T^\ast)^n=(\widetilde T^\ast)^n \widetilde{T}^n$, i.e., $\widetilde{T}^n$ is normal, as desired.	
		
		(iv) $\Rightarrow$ (v).
		The desired result follows immediately from $T^n=\widetilde{T}^n\oplus0$.
		
		(v) $\Rightarrow$ (i).
		By the assumption, \begin{equation}\label{coro1.5}
			\ker T^n=\ker(T^\ast)^n.
		\end{equation}
		Since $T$ commutes with $T^n$, the subspace $\ker T^n$ is invariant under $T$. Therefore, from \eqref{coro1.5} it follows that $X=P_{\overline{\operatorname{ran}T^n}}
		T\big|_{\ker (T^\ast)^n}=0$ in
		\eqref{mr5}.
		Hence $T=\widetilde T\oplus N$, and consequently $T^n=\widetilde T^n\oplus0$. As in the proof of (ii) $\Rightarrow$ (iii), we obtain \eqref{coro1.3}; that is, $\widetilde{T}$ has dense range. Then, by
		\eqref{coro2.3eq1} we have $\overline{\operatorname{ran}T^{n+1}}
		=\overline{\operatorname{ran}\widetilde T}
		=\overline{\operatorname{ran}T^n}$. This proves (i) and completes the proof.
	\end{proof}
	\begin{remark}
		(i) The above proof shows that the implication $$T^n\ \text{is normal}\Rightarrow P_{\overline{\operatorname{ran}T^n}}
		T|_{\ker(T^\ast)^n}=0 $$does not require $T$ to be $n$-power quasinormal.
		
		(ii) In general, the closures in condition (i) cannot be omitted. Consider the diagonal operator $T$ on $l^2(\mathbb{N}_{0})$ defined by $Te_{k}=\frac{1}{k+1}e_{k}$ for $k\in\mathbb{N}_{0}$. Then $T$ is self-adjoint, hence $n$-power quasinormal, and $T^n$ is normal for every $n\in\mathbb{N}$. However, $y\in\operatorname{ran}T^n$ but $y\notin\operatorname{ran}T^{n+1}$, where $y=\sum\limits_{k=0}^\infty \frac{1}{(k+1)^{n+1}}e_{k}$. Indeed, one readily verifies that $T^n\Big(\sum\limits_{k=0}^\infty\frac{1}{k+1}e_{k}\Big)=y$. If $y\in\operatorname{ran}T^{n+1}$, then for some $x=\sum\limits_{k=0}^\infty x_{k}e_{k}\in l^2({\mathbb{N}_{0}})$, we have $T^{n+1}x=y$, which implies $x_{k}=1$ for all $k$. Thus $x=\sum\limits_{k=0}^\infty e_{k}\not\in l^2(\mathbb{N}_{0})$, contradiction. Hence $y\notin\operatorname{ran}T^{n+1}$.
		
		(iii) We give an example to show that, without the $n$-power quasinormality
		assumption, conditions (i), (ii) and (iii) in the above theorem do not imply that $T^n$ is normal. Let $T=
		\begin{bmatrix}
			1&1\\
			0&1
		\end{bmatrix}
		\in \mathbb{M}_{2}(\mathbb C)$. For every $n\geq1$, $T^n=
		\begin{bmatrix}
			1&n\\
			0&1
		\end{bmatrix}$. A direct computation gives
		$$T^n(T^*T)-(T^*T)T^n=
		\begin{bmatrix}
			n&n\\
			0&-n
		\end{bmatrix}
		\neq0.$$ Hence $T$ is not $n$-power quasinormal. Since $T$ is invertible, we have $\overline{\operatorname{ran}T^n}
		=\mathbb C^2$ and $\ker(T^*)^n=\{0\}$. Therefore, $P_{\overline{\operatorname{ran}T^n}}T|_{\ker(T^*)^n}=0$, $\ker (T^\ast)^n\subseteq\ker T^n$, and also $\overline{\operatorname{ran}T^n}=\overline{\operatorname{ran}T^{n+1}}=\mathbb C^2$. Moreover, a straightforward induction gives $(T^\ast)^nT^n=\begin{bmatrix}
			1&n\\
			n&n^2+1
		\end{bmatrix}$ and $T^n(T^\ast)^n=
		\begin{bmatrix}
			n^2+1&n\\
			n&1
		\end{bmatrix}$. Thus, $(T^\ast)^nT^n\neq T^n(T^\ast)^n$, so $T^n$ is not normal.	
	\end{remark}
	
	Recall that the {\em descent} $\operatorname{desc}(T)$ of an operator $T$ is the least $n\in\mathbb{N}_{0}$ such that $\operatorname{ran}T^n=\operatorname{ran}T^{n+1}$ (with $\operatorname{desc}(T)=\infty$ if no such $n$ exists), and that the {\em nilpotency index} $\operatorname{ind}(N)$ of a nilpotent operator $N$ is the least $m\in\mathbb{N}$ with $N^m=0$. In view of Proposition \ref{MR}, it is natural to consider the special case of \eqref{mr5} where $\widetilde{T}$ is invertible and $\operatorname{ind}(N)=n$. In fact, we have the following.
	\begin{corollary}\label{desc}
		Let $n\in\mathbb N$, and let $T\in\mathbb{B}(\mathcal H)$ be an
		$n$-power quasinormal operator. Then $\operatorname{desc}(T)=n$ if and only if $\widetilde{T}$ is invertible and $\operatorname{ind}(N)=n$, where $\widetilde T$ and $N$ are as in \eqref{mr5}. In this case, $T^n$ is normal.
	\end{corollary}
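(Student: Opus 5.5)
Both directions go through the block form \eqref{mr5} of Proposition \ref{MR} and its powers \eqref{mr2}. The key observation is that $\operatorname{desc}(T)=n$ makes $\operatorname{ran}T^n$ closed, so condition (i) of Theorem \ref{coromain} holds and $X=0$.

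\emph{Forward direction.} Assume $\operatorname{desc}(T)=n$, so $\operatorname{ran}T^n=\operatorname{ran}T^{n+1}$. By induction, $\operatorname{ran}T^m=\operatorname{ran}T^n$ for all $m\ge n$.

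First I show $\operatorname{ran}T^n$ is closed. Put $\mathcal M=\overline{\operatorname{ran}T^n}$. Since $\operatorname{ran}T^{n+1}=T(\operatorname{ran}T^n)$, the equality of ranges gives $T(\operatorname{ran}T^n)=\operatorname{ran}T^n$, so $\widetilde T$ maps the dense subspace $\operatorname{ran}T^n$ onto itself. This alone does not give closedness, so I use the structure instead.

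Condition (i) of Theorem \ref{coromain} holds trivially, so $X=0$, $T=\widetilde T\oplus N$ and $T^n=\widetilde T^n\oplus 0$. Hence $\operatorname{ran}T^n=\operatorname{ran}\widetilde T^n$ and $\operatorname{ran}T^{n+1}=\operatorname{ran}\widetilde T^{n+1}$.

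Because $\widetilde T$ is injective, $\widetilde T^{n+1}x=\widetilde T^n y$ forces $\widetilde T x=y$. So $\operatorname{ran}\widetilde T^n=\operatorname{ran}\widetilde T^{n+1}$ implies $\operatorname{ran}\widetilde T^n\subseteq\widetilde T^n(\operatorname{ran}\widetilde T)$, and injectivity of $\widetilde T^n$ then gives $\operatorname{ran}\widetilde T=\mathcal M$. Thus $\widetilde T$ is a bijection of $\mathcal M$, and by the open mapping theorem it is invertible.

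For the nilpotent part, $N^n=0$ by Proposition \ref{MR}. If $N^{n-1}=0$, then $T^{n-1}=\widetilde T^{n-1}\oplus 0$ and $\operatorname{ran}T^{n-1}=\mathcal M=\operatorname{ran}T^n$ (using invertibility of $\widetilde T$). This gives $\operatorname{desc}(T)\le n-1$, a contradiction, so $\operatorname{ind}(N)=n$.

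\emph{Converse.} Assume $\widetilde T$ is invertible and $\operatorname{ind}(N)=n$. Since $\widetilde T^*X=0$ and $\widetilde T^*$ is invertible, $X=0$, so $T=\widetilde T\oplus N$ and $T^k=\widetilde T^k\oplus N^k$. Then $\operatorname{ran}T^k=\mathcal M\oplus\operatorname{ran}N^k$.
\begin{itemize}
\item For $k\ge n$ this equals $\mathcal M$, so $\operatorname{ran}T^n=\operatorname{ran}T^{n+1}$.
\item For $k<n$, $\operatorname{ran}T^k=\operatorname{ran}T^{k+1}$ would force $\operatorname{ran}N^k=\operatorname{ran}N^{k+1}$. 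Iterating gives $\operatorname{ran}N^k=\operatorname{ran}N^n=0$, contradicting $\operatorname{ind}(N)=n>k$.
\end{itemize}
Hence $\operatorname{desc}(T)=n$. Since $X=0$, condition (iii) of Theorem \ref{coromain} holds, so $T^n$ is normal.

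\emph{Main obstacle.} The delicate step is extracting invertibility of $\widetilde T$ from a non-closure equality of ranges. The plan handles it by first using Theorem \ref{coromain} to get $X=0$, then using injectivity of $\widetilde T$ to cancel powers. The point to verify carefully is that $\operatorname{ran}\widetilde T^n=\mathcal M$ is really obtained, so that the open mapping theorem applies. It comes from $\operatorname{ran}\widetilde T=\mathcal M$, since $\widetilde T$ is a bijection of $\mathcal M$ and hence so is $\widetilde T^n$.
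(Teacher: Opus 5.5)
Your proof is correct and follows essentially the same route as the paper. Theorem \ref{coromain} gives $X=0$ and $T=\widetilde T\oplus N$; cancelling the injective $\widetilde T^{\,n}$ in $\widetilde T^{\,n}(\mathcal M)=\widetilde T^{\,n}(\widetilde T\mathcal M)$ gives surjectivity, hence invertibility, of $\widetilde T$; comparing $\operatorname{ran}T^{n-1}$ with $\operatorname{ran}T^n$ forces $N^{n-1}\neq 0$; and the converse uses $\widetilde T^*X=0$ to split $T$ and then compares ranges. The differences are cosmetic: your opening claim that closedness of $\operatorname{ran}T^n$ is needed is superfluous, since condition (i) holds trivially, as you note yourself. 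Also, checking every $k<n$ in the converse makes explicit the range-stabilization step that the paper leaves implicit.
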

	\begin{proof}
		{\em Necessity}. Suppose that $\operatorname{desc}(T)=n$. Then
		\begin{equation}\label{desequ}
			\operatorname{ran}T^{n-1}\ne\operatorname{ran}T^n
		\end{equation} and
		$\overline{\operatorname{ran}T^n}=\overline{\operatorname{ran}T^{n+1}}$. By Theorem \ref{coromain}, we have $T=\widetilde T\oplus N$, where $\widetilde T$ is an $n$-power normal quasiaffinity and $N^n=0$. Consequently, $\operatorname{ran}\widetilde{T}^n=\operatorname{ran}T^n=\operatorname{ran}T^{n+1}=\operatorname{ran}\widetilde{T}^{n+1}$. Hence, $$\widetilde{T}^n(\overline{\operatorname{ran}T^n})=\widetilde{T}^{n+1}(\overline{\operatorname{ran}T^n})=\widetilde{T}^n(\widetilde{T}\overline{\operatorname{ran}T^n}).$$
		Since $\widetilde T$ is a quasiaffinity, it is injective, and so is $\widetilde T^{n}$. Therefore, by the above equality, we obtain $\overline{\operatorname{ran}T^n}=\widetilde{T}\overline{\operatorname{ran}T^n}$, so $\widetilde T$ is surjective, and hence invertible. This implies 
		$$\operatorname{ran}T^{n-1}=\operatorname{ran}\widetilde{T}^{n-1}\oplus\operatorname{ran}N^{n-1}=\operatorname{ran}\widetilde{T}\oplus\operatorname{ran}N^{n-1}$$
		and $$\operatorname{ran}T^n=\operatorname{ran}\widetilde{T}^n=\operatorname{ran}\widetilde{T}.$$ By \eqref{desequ}, we have $\operatorname{ran}N^{n-1}\ne\{0\}$, so $N^{n-1}\ne0$. Together with $N^n=0$, this yields $\operatorname{ind}(N)=n$.
		
		{\em Sufficiency}. Since $\widetilde{T}$ is invertible, it follows from $\widetilde{T}^\ast X=0$ in Proposition \ref{MR} that $X=0$; hence $T=\widetilde{T}\oplus N$. Therefore, $\operatorname{ran}T^n=\operatorname{ran}\widetilde{T}^n=\operatorname{ran}\widetilde{T}^{n+1}=\operatorname{ran}T^{n+1}$. Thus $\operatorname{desc}(T)\le n$. On the other hand, $$\operatorname{ran}T^{n-1}=\operatorname{ran}\widetilde{T}^{n-1}\oplus\operatorname{ran} N^{n-1}=\operatorname{ran}\widetilde{T}^n\oplus\operatorname{ran}N^{n-1}.$$ Since $\operatorname{ind}(N)=n$, we have $\operatorname{ran}N^{n-1}\neq\{0\}$, and hence $\operatorname{ran}T^{n-1}
		\neq
		\operatorname{ran}\widetilde{T}^n=\operatorname{ran}T^n$.
		Therefore $\operatorname{desc}(T)\ge n$, and consequently $\operatorname{desc}(T)=n$.
		
		In this case, Theorem \ref{coromain} (i) immediately gives that $T^n$ is normal.
	\end{proof}
	
	 The unilateral shift $W=\operatorname{shift}(1,1,1,\cdots)$ is an isometry ($W^\ast W=I$), hence $n$-power quasinormal. Its spectrum $\sigma(W)$ is the closed unit disk, while its approximate point spectrum $\sigma_{ap}(W)$ is the unit circle. Ko and Lee proved the latter part of the result below in \cite[Corollary 3.14]{ko-filomat-2023} via the single-valued extension property of $n$-power normal operators; here we give an alternative proof and establish a stronger result.
	
	\begin{corollary}
		Let $n\in\mathbb N$, and let $T\in\mathbb{B}(\mathcal H)$ be an
		$n$-power quasinormal operator. If $\operatorname{ran}T=\operatorname{ran}T^{n+1}$, then $T$ is similar to a normal operator, so $\sigma(T)=\sigma_{ap}(T)$.
	\end{corollary}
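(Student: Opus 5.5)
The plan is to reduce to the case where $T=\widetilde T\oplus 0$ with $\widetilde T$ invertible and $\widetilde T^n$ normal. Then an explicit similarity built from the spectral data of $\widetilde T^n$ turns $\widetilde T$ into a normal operator.

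\emph{Step 1: the decomposition.} The hypothesis $\operatorname{ran}T=\operatorname{ran}T^{n+1}$ is squeezed by the chain $\operatorname{ran}T^{n+1}\subseteq\operatorname{ran}T^n\subseteq\operatorname{ran}T$, so all three ranges coincide. In particular $\overline{\operatorname{ran}T^n}=\overline{\operatorname{ran}T^{n+1}}$. Theorem \ref{coromain} then gives $T=\widetilde T\oplus N$ as in \eqref{mr5}, with $\widetilde T$ an $n$-power normal quasiaffinity and $N^n=0$.

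Now compare ranges. We have $\operatorname{ran}T=\operatorname{ran}\widetilde T\oplus\operatorname{ran}N$, while $\operatorname{ran}T^{n+1}=\operatorname{ran}\widetilde T^{n+1}\oplus\{0\}$. Equality forces $N=0$ and $\operatorname{ran}\widetilde T=\operatorname{ran}\widetilde T^{n+1}$.

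For any $y$ write $\widetilde Ty=\widetilde T^{n+1}z$. Injectivity of $\widetilde T$ gives $y=\widetilde T^nz$, so $\widetilde T^n$ is onto. Hence $\widetilde T$ is onto, and being injective it is invertible. So $T=\widetilde T\oplus 0$ with $\widetilde T$ invertible and $M:=\widetilde T^n$ normal and invertible.

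\emph{Step 2: splitting off the modulus of $M$.} This is the main step. Since $\widetilde T$ commutes with $M$, the Fuglede theorem shows that $\widetilde T$ commutes with $M^\ast$. Hence $\widetilde T$ commutes with $|M|$ and with the unitary $U:=M|M|^{-1}$. Put $S=|M|^{1/n}$, which is positive and invertible. Then $S$ commutes with $\widetilde T$, and, taking adjoints, also with $\widetilde T^\ast$.

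Set $V=\widetilde TS^{-1}$. Then $V$ is invertible, commutes with $S$ and with $V^\ast$'s partner $S$ as well, and
\[
V^n=\widetilde T^nS^{-n}=M|M|^{-1}=U,
\]
which is unitary.

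\emph{Step 3: making $V$ unitary.} Define
\[
Q=\sum_{r=0}^{n-1}V^{\ast r}V^r .
\]
This is positive and invertible, since $Q\ge I$. A direct telescoping computation gives
\[
V^\ast QV=Q-I+V^{\ast n}V^n=Q,
\]
because $V^n=U$ is unitary. Hence $W:=Q^{1/2}VQ^{-1/2}$ is an isometry, and it is invertible, so $W$ is unitary.

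Since $S$ commutes with $V$ and $V^\ast$, it commutes with $Q$ and $Q^{1/2}$, and therefore with $W$. Consequently
\[
Q^{1/2}\widetilde TQ^{-1/2}=Q^{1/2}VSQ^{-1/2}=WS
\]
is the product of a unitary and a positive operator that commute, and such a product is normal. Thus $T$ is similar, via $Q^{1/2}\oplus I$, to the normal operator $WS\oplus 0$.

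\emph{Step 4: the spectra.} For a normal operator the spectrum equals the approximate point spectrum. Both $\sigma$ and $\sigma_{ap}$ are invariant under similarity, because bounded-below-ness and invertibility are preserved by similarity. Hence $\sigma(T)=\sigma_{ap}(T)$.

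The only delicate point is Step 2, namely producing a positive invertible $S$ commuting with both $\widetilde T$ and $\widetilde T^\ast$; Fuglede's theorem is exactly what makes this possible.
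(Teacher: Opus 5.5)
Your proof is correct. It follows the paper through Step 1, but it takes a genuinely different route at the key step.

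Once the paper knows that $\widetilde T$ is invertible and $\widetilde T^n$ is normal, it simply cites Kurepa's theorem on $n$-th roots of normal operators: an operator whose $n$-th power is an invertible normal operator is similar to a normal operator. Only afterwards does it dispose of $N$, by noting that $\operatorname{ran}N\subseteq\operatorname{ran}T^n\cap\ker(T^\ast)^n=\{0\}$.

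You argue differently in two places:
\begin{itemize}
\item You kill $N$ first, by comparing the product sets $\operatorname{ran}\widetilde T\times\operatorname{ran}N$ and $\operatorname{ran}\widetilde T^{n+1}\times\{0\}$.
\item You then prove the needed case of Kurepa's theorem by hand. You factor $\widetilde T=VS$, where $S=|\widetilde T^n|^{1/n}$ commutes with both $\widetilde T$ and $\widetilde T^\ast$ and $V^n$ is unitary. You then renorm with $Q=\sum_{r=0}^{n-1}V^{\ast r}V^r$, so that $Q^{1/2}VQ^{-1/2}$ becomes unitary.
\end{itemize}
All the details check out: the telescoping identity $V^\ast QV=Q$, the fact that $W$ is an invertible isometry and hence unitary, the fact that $S$ commutes with $Q^{\pm1/2}$, and the normality of $WS$.

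The paper's version is shorter but rests on an external result. Yours is self-contained and gives an explicit similarity $Q^{1/2}\oplus I$ and an explicit normal model $WS\oplus 0$.

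Two small remarks:
\begin{itemize}
\item Fuglede's theorem is not actually needed here. Theorem \ref{coromain}(iv) already gives $[\widetilde T^n,\widetilde T^\ast]=0$, and taking adjoints yields $\widetilde T M^\ast=M^\ast\widetilde T$ directly. Fuglede is what you would need for the general Kurepa statement, where only normality of the power is assumed.
\item The phrase about ``$V^\ast$'s partner $S$'' is garbled. It should simply say that both $V$ and $V^\ast$ commute with $S$, since that is exactly what you use to get $[S,Q]=0$.
\end{itemize}
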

	\begin{proof} 
		Since $\operatorname{ran}T^{n+1} \subseteq \operatorname{ran}T^n \subseteq \cdots \subseteq \operatorname{ran}T$ and \(\operatorname{ran}T = \operatorname{ran}T^{n+1}\), it follows that
		\begin{equation}\label{rangepower}
			\operatorname{ran}T = \operatorname{ran}T^2 = \cdots = \operatorname{ran}T^{n+1}.
		\end{equation}Hence, \(\overline{\operatorname{ran}T^n} = \overline{\operatorname{ran}T^{n+1}}\). Therefore, Theorem \ref{coromain} yields \(T = \widetilde{T} \oplus N\), where $\widetilde T$ is an $n$-power normal quasiaffinity and $N^n=0$. By \eqref{rangepower}, we have
		$\operatorname{ran}\widetilde{T}^n=\operatorname{ran}T^n=\operatorname{ran}T^{n+1}=\operatorname{ran}\widetilde{T}^{n+1}$. Following the same argument as in the proof of necessity in Corollary \ref{desc}, we get that $\widetilde{T}$ is surjective; hence $\widetilde{T}^n$ is an invertible normal operator. Applying \cite[Theorem 1]{kur-mz-1962}, we obtain that $\widetilde{T}$ is similar to a normal operator. On the other hand, from \eqref{rangepower} we have $\operatorname{ran}T^n=\operatorname{ran}T=\operatorname{ran}\widetilde{T}\oplus\operatorname{ran}N$. Together with $\operatorname{ran}N\subseteq\ker (T^\ast)^n$, this yields $N=0$. Consequently, $T=\widetilde{T}\oplus0$ is similar to a normal operator $R$, which implies $\sigma(T)=\sigma(R)$ and $\sigma_{ap}(T)=\sigma_{ap}(R)$. The normality of $R$ gives $\sigma(R)=\sigma_{ap}(R)$, hence $\sigma(T)=\sigma_{ap}(T)$. The proof is complete.
	\end{proof}
	
	\begin{corollary}{\cite[Lemma 2.4]{stankovic-afa-2025}}\label{quaisnormal}
		Let $T\in\mathbb{B}(\mathcal{H})$. Then $T$ is normal if and only if $T$ is quasinormal and $\ker T^\ast\subseteq\ker T$.
	\end{corollary}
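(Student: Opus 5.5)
The statement is labeled as a corollary, so the plan is to read it off Theorem \ref{coromain} in the case $n=1$. Being $1$-power quasinormal means exactly $[T,T^\ast T]=0$, that is, quasinormal. Being $1$-power normal means $T$ is normal.

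\emph{Necessity.} Assume $T$ is normal. Then $T$ is quasinormal, since $T^\ast T=TT^\ast$ gives $T(T^\ast T)=(TT^\ast)T=(T^\ast T)T$. Also $\|Tx\|=\|T^\ast x\|$ for every $x$, so $\ker T^\ast=\ker T$, and in particular $\ker T^\ast\subseteq\ker T$. One could instead quote the implication (v)$\Rightarrow$(ii) of Theorem \ref{coromain} with $n=1$, but the direct check is shorter.

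\emph{Sufficiency.} Assume $T$ is quasinormal and $\ker T^\ast\subseteq\ker T$. Then $T$ is $1$-power quasinormal, and condition (ii) of Theorem \ref{coromain} holds with $n=1$. The implication (ii)$\Rightarrow$(v) gives that $T^1=T$ is normal.

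The only point to check is that the proof of Theorem \ref{coromain} (and of Proposition \ref{MR}) goes through when $n=1$. In that case the decomposition is $\mathcal H=\overline{\operatorname{ran}T}\oplus\ker T^\ast$, and $N$ satisfies $N=N^1=0$. The cancellation argument in (iii)$\Rightarrow$(iv) then yields $\widetilde T\widetilde T^\ast=\widetilde T^\ast\widetilde T$ directly, with no repetition needed. None of these steps uses $n\ge2$, so the specialization is legitimate.

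Alternatively, I would give a direct argument. Write $T=\begin{bmatrix}\widetilde T&X\\0&0\end{bmatrix}$ on $\overline{\operatorname{ran}T}\oplus\ker T^\ast$. The hypothesis $\ker T^\ast\subseteq\ker T$ forces $X=0$. Quasinormality then gives $\widetilde T\,\widetilde T^\ast\widetilde T=\widetilde T^\ast\widetilde T\,\widetilde T$. Since $\widetilde T$ has dense range, we can cancel $\widetilde T$ on the right, which gives normality of $\widetilde T$ and hence of $T=\widetilde T\oplus 0$.
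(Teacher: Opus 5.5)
Your proposal is correct and follows essentially the paper's route: the paper also derives the corollary by specializing the equivalence (ii)$\Leftrightarrow$(v) of Theorem \ref{coromain} to $n=1$, using that $1$-power quasinormal means quasinormal. Your direct check of necessity and your block-matrix argument on $\overline{\operatorname{ran}T}\oplus\ker T^\ast$ simply unpack the $n=1$ case of that proof.
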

	\begin{proof}
		Since $1$-power quasinormal operators are quasinormal, the desired result follows from Theorem \ref{coromain} (ii) and (v).	
	\end{proof}
	We now give the correct formulation of Theorem \ref{kolee} in the theorem below. We then conclude this section with a new proof of Theorem \ref{svep} \cite[Theorem 3.2]{ko-filomat-2023}, which avoids the use of \cite[Lemma 3.1]{ko-filomat-2023} and thereby removes the gap in the original argument.	
		\begin{theorem}\label{mainthm}
		Let $n\in\mathbb{N}$ and $T\in\mathbb{B}(\mathcal{H})$. Consider the following conditions:
		\begin{itemize}
			\item[(i)] $T^n$ is normal.
			\item[(ii)] $T^n$ is quasinormal.
			\item[(iii)] $T$ is $n$-power quasinormal.
		\end{itemize}
		Then $\mathrm{(i)}$ implies $\mathrm{(ii)}$ and $\mathrm{(iii)}$. Morever, if $$\ker (T^\ast)^n\subseteq\ker T^n,$$ then the three conditions are equivalent. In particular, if $\mathcal{H}$ is finite-dimensional, the above kernel inclusion is automatically satisfied whenever any one of $\mathrm{(i)}$--$\mathrm{(iii)}$ holds.
	\end{theorem}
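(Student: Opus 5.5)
We split the proof of Theorem \ref{mainthm} into three parts: the unconditional implications, the equivalence under the kernel inclusion, and the finite-dimensional claim.

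\textbf{(i) implies (ii) and (iii).} Every normal operator is quasinormal, so (i) gives (ii) at once. For (iii), Fuglede's theorem applied to the normal operator $T^n$ is the natural tool. Since $T$ commutes with $T^n$, it also commutes with $(T^n)^\ast=(T^\ast)^n$, so $T^\ast$ commutes with $T^n$. Therefore
$$T^n T^\ast T=T^\ast T^n T=T^\ast T T^n,$$
which is (iii).

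\textbf{Equivalence under $\ker(T^\ast)^n\subseteq\ker T^n$.} It remains to prove (ii)$\Rightarrow$(i) and (iii)$\Rightarrow$(i).
\begin{itemize}
\item For (iii)$\Rightarrow$(i), apply Theorem \ref{coromain}: $T$ is $n$-power quasinormal and condition (ii) there holds, so condition (v) there gives that $T^n$ is normal.
\item For (ii)$\Rightarrow$(i), set $S=T^n$. Then $S$ is quasinormal and $\ker S^\ast=\ker(T^\ast)^n\subseteq\ker T^n=\ker S$. Corollary \ref{quaisnormal} (the case $n=1$ of Theorem \ref{coromain}) then shows that $S=T^n$ is normal.
\end{itemize}

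\textbf{Finite-dimensional case.} Assume $\dim\mathcal H<\infty$ and that one of (i)--(iii) holds; we must show $\ker(T^\ast)^n\subseteq\ker T^n$.
\begin{itemize}
\item If (i) holds, then $\ker T^n=\ker(T^n)^\ast$ by normality.
\item If (ii) holds, a quasinormal matrix is normal, which reduces to the previous case. To see this, let $S$ be quasinormal. Then $S$ commutes with $S^\ast S$, so $\ker S$ reduces $S$ and $S$ restricted to $(\ker S)^\perp$ is injective, hence invertible in finite dimensions. An injective quasinormal operator $A$ with dense range satisfies $AA^\ast A=A^\ast AA$, and cancelling $A$ on the right gives $AA^\ast=A^\ast A$. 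Thus $S=A\oplus 0$ is normal.
\item If (iii) holds, use Proposition \ref{MR}: $T=\begin{bmatrix}\widetilde T&X\\0&N\end{bmatrix}$ with $\widetilde T$ injective. In finite dimensions $\widetilde T$ is therefore invertible, so $\widetilde T^\ast X=0$ forces $X=0$. Hence $T^n=\widetilde T^n\oplus 0$. It follows that $\ker(T^\ast)^n=\ker\bigl((\widetilde T^\ast)^n\bigr)\oplus\ker(T^\ast)^n=0\oplus\ker(T^\ast)^n$, which is contained in $\ker T^n$ because $\widetilde T^n$ is invertible.
\end{itemize}

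The main obstacle is the case (iii) in finite dimensions. It relies on the injectivity of $\widetilde T$ from Proposition \ref{MR}; once that is available, the rest is routine bookkeeping.
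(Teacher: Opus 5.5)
Your proof is correct and follows essentially the same route as the paper: (i)$\Rightarrow$(iii) via Fuglede's theorem (this is the content of the $n$-power normality fact the paper invokes), the conditional equivalences via Theorem~\ref{coromain} and Corollary~\ref{quaisnormal} applied to $S=T^n$, and the finite-dimensional case of (iii) by using the injectivity of $\widetilde T$ from Proposition~\ref{MR} to force $X=0$; the only difference is that you prove, rather than cite, the fact that quasinormal matrices are normal. One small slip in the last step: the inclusion $\ker(T^\ast)^n\subseteq\ker T^n$ follows from $X=0$ and $N^n=0$, so that $T^n=\widetilde T^n\oplus 0$ annihilates the second summand, which is $\ker(T^\ast)^n$ by construction; it does not follow from the invertibility of $\widetilde T^n$, and your displayed decomposition of $\ker(T^\ast)^n$ is tautological.
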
 
	\begin{proof}
		By Corollary \ref{quaisnormal} and the fact that every $n$--power normal operator is $n$-power quasinormal, it follows that (i) yields both (ii) and (iii). Under the condition $\ker (T^\ast)^n\subseteq\ker T^n$, Corollary \ref{quaisnormal} and Theorem \ref{coromain} show, respectively, that (ii) and (iii) each imply the normality of $T^n$. Moreover, as is well known, quasinormality and normality are equivalent in finite-dimensional spaces; hence, in this case, the kernel inclusion follows immediately from either (i) or (ii). Finally, let $T$ be as in \eqref{mr5}. Since $\widetilde{T}$	is injective on the finite-dimensional space $\operatorname{ran}T^n$, it is invertible. In view of $\widetilde{T}^\ast X=0$, this implies that $X=P_{\overline{\operatorname{ran}T^n}}T|_{\ker (T^\ast)^n}=0$. Theorem \ref{coromain} then immediately gives $\ker (T^\ast)^n\subseteq\ker T^n$. The proof is complete.
	\end{proof}
	
	To prove the next theorem, we need the following proposition.
	
	\begin{proposition}\label{svep1}
		Let $n\in\mathbb{N}$, and let $T$ be an $n$-power quasinormal operator. For distinct nonzero complex numbers $\alpha$ and $\beta$, we have $\ker (T^n-\alpha I)\perp\ker (T^n-\beta I)$.
	\end{proposition}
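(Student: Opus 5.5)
Let $x\in\ker(T^n-\alpha I)$ and $y\in\ker(T^n-\beta I)$. The plan is to show first that eigenvectors of $T^n$ for nonzero eigenvalues are also eigenvectors of $(T^n)^\ast$. Then the standard normal-operator computation gives orthogonality. Set $A=T^n$ and $P=T^\ast T$, so that $[A,P]=0$.

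\textbf{Step 1: reduce to the invariant subspace.} Since $\alpha\neq0$, we have $x=\alpha^{-1}T^nx\in\operatorname{ran}T^n\subseteq\overline{\operatorname{ran}T^n}$, and likewise for $y$. By Proposition \ref{MR}, $T=\begin{bmatrix}\widetilde T&X\\0&N\end{bmatrix}$ with $\widetilde T^\ast X=0$. Hence $x,y$ lie in $\overline{\operatorname{ran}T^n}$ and $\widetilde T^nx=\alpha x$, $\widetilde T^ny=\beta y$. It therefore suffices to work with $\widetilde T$, and on this subspace $T^\ast T$ restricts to $\widetilde T^\ast\widetilde T$.

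\textbf{Step 2: show $\widetilde T^\ast\widetilde T$ preserves eigenspaces.} Since $\widetilde T^n$ commutes with $Q=\widetilde T^\ast\widetilde T$, each eigenspace $E_\alpha=\ker(\widetilde T^n-\alpha)$ is invariant under $Q$. It is also invariant under $\widetilde T$. On $E_\alpha$ we have $\widetilde T^n=\alpha I$, so $\widetilde T|_{E_\alpha}$ is invertible on $E_\alpha$ with inverse $\alpha^{-1}\widetilde T^{n-1}|_{E_\alpha}$.

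The key computation is $\|\widetilde T^n x\|^2=\langle (\widetilde T^\ast)^n\widetilde T^nx,x\rangle$. I would try to show that $(\widetilde T^\ast)^n\widetilde T^n$ equals $Q^n$ on $E_\alpha$, or at least that $(T^n)^\ast T^n$ commutes appropriately. Since $Q$ commutes with $\widetilde T^n$, the goal is $(\widetilde T^n)^\ast x=\bar\alpha x$.

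Consider $\|(\widetilde T^n)^\ast x-\bar\alpha x\|^2=\|(\widetilde T^n)^\ast x\|^2-|\alpha|^2\|x\|^2$, where we used $\langle (\widetilde T^n)^\ast x,x\rangle=\bar\alpha\|x\|^2$. The difficulty is controlling $\|(\widetilde T^n)^\ast x\|$.

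\textbf{Step 3: avoid the adjoint estimate.} A cleaner route is to compute $\langle x,y\rangle$ directly. We have
\[\alpha\langle x,y\rangle=\langle T^nx,y\rangle=\langle x,(T^\ast)^ny\rangle.\]
So it suffices to show $(T^\ast)^n y=\bar\beta y$, i.e. $\ker(T^n-\beta)\subseteq\ker((T^\ast)^n-\bar\beta)$ for $\beta\ne0$. For this I would use the structure on $E_\beta$.

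From $T^nQ=QT^n$ and $Q=T^\ast T$, we get $T^\ast T^{n+1}=T^nT^\ast T$. Apply this to $y$:
\[T^\ast T(\beta y)=\beta T^\ast Ty \quad\text{and}\quad T^\ast T^{n+1}y=T^\ast T(T^ny)=\beta T^\ast Ty,\]
which are consistent but give no new information. So I would instead use the polar decomposition $T=U|T|$ together with Xu's result \cite[Proposition 2.6]{xu-aot-2018} that $T^n$ commutes with $|T|$ and with $P_{\overline{\operatorname{ran}|T|}}$. The aim is to show $\|T^{k}y\|$ behaves multiplicatively. Restricted to the finite-type invariant subspace $E_\beta$, $Q$ commutes with $\widetilde T^n=\beta I$, which is trivially true, so that alone is not enough.

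This is the main obstacle: the commutation is degenerate on eigenspaces, so more structure is needed. I would try the following. The operator $Q$ commutes with $T^n$, and hence with $\ker(T^n-\beta)$ viewed through $T^{n\ast}$ as well. Let $M$ be the closed span of all $E_\beta$ with $\beta\neq0$, and show that $T^n|_M$ is normal using Theorem \ref{coromain}. On $M$, $T$ acts, $T|_M$ is $n$-power quasinormal (because $Q$ leaves $M$ invariant and $Q|_M=(T|_M)^\ast T|_M$ provided $M$ reduces $Q$), and $\overline{\operatorname{ran}(T|_M)^n}=M=\overline{\operatorname{ran}(T|_M)^{n+1}}$, since $T^n$ is invertible on each $E_\beta$.

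Theorem \ref{coromain}(i)$\Rightarrow$(v) then gives that $(T|_M)^n$ is normal. For a normal operator, eigenspaces for distinct eigenvalues are orthogonal, which yields the claim. The delicate point is verifying that $Q$ leaves $M$ invariant (true, since $Q$ commutes with $T^n$) and that $Q|_M=(T|_M)^\ast(T|_M)$. The latter requires $P_MT^\ast T|_M=T^\ast T|_M$, which follows because $Q$ maps $M$ into $M$.
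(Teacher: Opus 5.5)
Your final argument via $M$ is correct, but it takes a different and much heavier route than the paper. The steps check out. $T$ and $Q=T^\ast T$ commute with $T^n$, so both leave every $E_\beta$, and hence $M$, invariant. From $QM\subseteq M$ you get $(T|_M)^\ast(T|_M)=P_MQ|_M=Q|_M$, so $T|_M$ is $n$-power quasinormal. As you noted, $T|_{E_\beta}$ is invertible on $E_\beta$ with inverse $\beta^{-1}T^{n-1}$, so $E_\beta\subseteq\operatorname{ran}(T|_M)^{n+1}$. Thus condition (i) of Theorem \ref{coromain} holds for $T|_M$, $(T|_M)^n$ is normal, and its eigenspaces for $\alpha\neq\beta$ are orthogonal.

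The paper instead shows directly that $\mathcal S_z=\ker(T^n-zI)$, $z\neq0$, is $T^\ast$-invariant. For $x\in\mathcal S_z$, the vector $w=z^{-1}T^{n-1}x$ lies in $\mathcal S_z$ and satisfies $Tw=x$, so $T^\ast x=T^\ast Tw=Qw\in\mathcal S_z$. Hence $(T^n)^\ast=\bar z I$ on $\mathcal S_z$, and the computation at the start of your Step 3 finishes. You already had both ingredients in Step 2: invariance of $E_\alpha$ under $Q$, and the inverse $\alpha^{-1}T^{n-1}$ of $T|_{E_\alpha}$. Your remark that applying $T^\ast T^{n+1}=T^nT^\ast T$ to $y$ gives no new information missed the point: $Q$ should be applied to the preimage $w$ of $y$, not to $y$ itself.

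The paper's route is two lines and self-contained. Yours is legitimate, since Theorem \ref{coromain} does not depend on this proposition, so there is no circularity. But it pulls in Proposition \ref{MR} and Theorem \ref{coromain} without gaining anything. The direct argument already shows each $\mathcal S_z$ reduces $T$, so $T^n$ is an orthogonal sum of scalars, hence normal, on the closed span of these eigenspaces. In a write-up, remove Step 1, which is never used, and the abandoned attempts in Steps 2--3.
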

	\begin{proof}
		If we can prove that $\mathcal{S}_{z}\triangleq\ker(T^n-zI)$ is invariant under $T^\ast$ for every nonzero $z\in\mathbb{C}$, then since $T^n=zI$ on $\mathcal{S}_{z}$, it follows that $(T^n)^\ast=\bar{z}I$ on $\mathcal{S}_{z}$. Indeed, from $T^nT^\ast T=T^\ast TT^n$, we have
		\begin{equation}\label{inv1}
			T^\ast T\mathcal{S}_{z}\subseteq\mathcal{S}_{z}.
		\end{equation} 
		Since $T^n=zI$ on $\mathcal{S}_{z}$, for every $x\in\mathcal{S}_{z}$, there exists $y=\frac{1}{z}T^{n-1}x\in\mathcal{S}_{z}$ such that $Ty=x$. Then \eqref{inv1} gives $T^\ast x=T^\ast Ty\in\mathcal{S}_{z}$. Hence, $T^\ast\mathcal{S}_{z}\subseteq\mathcal{S}_{z}$, as desired. Therefore, for any $x\in\mathcal{S}_{\alpha}$ and $y\in \mathcal{S}_{\beta}$, we get
		$$\alpha\langle x,y\rangle=\langle T^nx,y\rangle=\langle x,(T^n)^\ast y\rangle=\langle x,\bar{\beta}y\rangle=\beta\langle x,y\rangle.$$
		Since $\alpha\ne\beta$, it follows that $\langle x,y\rangle=0$. Hence, $\mathcal{S}_{\alpha}\perp\mathcal{S}_{\beta}$.
	\end{proof}
	
	An operator $T\in\mathbb{B}(\mathcal{H})$ is said to have the {\em single-valued extension property} if, for every open set $U\subseteq\mathbb{C}$, the only analytic function $f:U\to\mathcal{H}$ satisfying $(T-z)f(z)=0$ for all $z\in U$ is the zero function. 
	
	\begin{theorem}\label{svep}
		Let $n\in\mathbb{N}$, and let $T\in\mathbb{B}(\mathcal{H})$ be an $n$-power quasinormal operator. Then $T$ has the single-valued extension property.
	\end{theorem}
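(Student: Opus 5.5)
The plan is to prove that $T^n$ has the single-valued extension property and then pull it back to $T$. The pull-back uses the standard fact that $T$ has SVEP whenever $p(T)$ does, for a nonconstant polynomial $p$. I would check this fact directly rather than cite it. Let $f\colon U\to\mathcal H$ be analytic with $(T-z)f(z)=0$ on $U$. Then $T^nf(z)=z^nf(z)$, so $(T^n-z^n)f(z)=0$.

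On any open disc $D\subseteq U$ avoiding $0$, choose a local analytic branch $w=z^n$ with analytic inverse. The function $g(w)=f(z(w))$ is then analytic and satisfies $(T^n-w)g(w)=0$ on the open set $\{z^n : z\in D\}$. If $T^n$ has SVEP, then $g\equiv0$, so $f\equiv0$ on $D$. By the identity theorem, $f\equiv0$ on each component of $U$ that meets $\mathbb C\setminus\{0\}$, and every component does, since components are open. So it suffices to show that $T^n$ has SVEP.

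For $T^n$, I would use Proposition \ref{svep1}. Let $h$ be analytic on an open set $V$ with $(T^n-w)h(w)=0$. Then $h(w)\in\mathcal S_w=\ker(T^n-wI)$. Fix a connected component $V_0$ of $V$ and remove the point $0$. For distinct nonzero $w,w'\in V_0$, Proposition \ref{svep1} gives $\langle h(w),h(w')\rangle=0$.

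Now fix $w_0\in V_0\setminus\{0\}$. The map $w\mapsto\langle h(w),h(w_0)\rangle$ is analytic and vanishes for every $w\neq w_0$ near $w_0$. By continuity it also vanishes at $w_0$, so $\|h(w_0)\|^2=0$. Hence $h\equiv0$ on $V_0\setminus\{0\}$, and by continuity $h\equiv 0$ on all of $V_0$. This gives SVEP for $T^n$ without any appeal to \cite[Lemma 3.1]{ko-filomat-2023}.

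The main obstacle is the change of variables $w=z^n$ in the first step, since $z\mapsto z^n$ is not injective and fails to be locally invertible at $0$. This is why I work on small discs avoiding the origin, where a branch exists, and then invoke the identity theorem. Alternatively, I could simply cite the known result (Laursen–Neumann) that SVEP of $p(T)$ implies SVEP of $T$, but I would write out the local argument to keep the proof self-contained.
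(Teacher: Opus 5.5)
Your proof is correct, and its core is the same as the paper's. The relevant vector lies in $\ker(T^n-wI)$, Proposition \ref{svep1} makes values at distinct nonzero $w$ orthogonal, and continuity at a non-isolated point forces $\|h(w_0)\|^2=0$. The difference is only in how you pass between $T$ and $T^n$. You first prove SVEP for $T^n$ and then pull it back to $T$ through a local inverse of $z\mapsto z^n$, using the open mapping and identity theorems. The paper skips this detour. It applies Proposition \ref{svep1} directly to $f(z)\in\ker(T^n-z^nI)$, choosing nonzero $z_k\in U$ with $z_k\to z_0$ and $z_k^n\neq z_0^n$. This is possible because $z^n=z_0^n$ has only finitely many solutions. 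That choice removes what you call the main obstacle, so no branches are needed and the proof takes a few lines. What your route buys is modularity: it isolates SVEP of $T^n$ and the general transfer principle that SVEP of $p(T)$ implies SVEP of $T$, both of which can be reused.

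One wording fix. For $n\ge 3$, the map $z\mapsto z^n$ need not be injective on an arbitrary disc avoiding $0$, since a large such disc can contain both $z$ and $e^{2\pi i/n}z$. So take $D$ to be a small disc about some $z_0\neq 0$, for example of radius at most $|z_0|\sin(\pi/n)$. Such a disc lies in an open sector of angle $2\pi/n$ where $z\mapsto z^n$ is injective. Since you then invoke the identity theorem, one such disc per component suffices, and the rest of your argument goes through unchanged.
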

	\begin{proof}
		Let $U\subseteq\mathbb{C}$ be open, and suppose $f:U\to\mathcal{H}$ is analytic and satisfies $(T-zI)f(z)=0$ for all $z\in U$. Then $Tf(z)=zf(z)$, and hence $T^nf(z)=z^nf(z)$. Therefore, $f(z)\in\ker(T^n-z^n I)$. For any nonzero $z_{0}\in U$, we can choose a sequence $\{z_{k}\}_{k\in\mathbb{N}}$ of nonzero complex numbers in $U$ such that $\lim\limits_{k\to\infty}z_{k}=z_{0}$ and $z_{k}^n\ne z_{0}^n$ for all $k$. By Proposition \ref{svep1}, we have $f(z_{k})\perp f(z_{0})$, i.e., $\langle f(z_{k}), f(z_{0})\rangle=0$. Since $f$ is analytic, it is continuous. Hence, $$\|f(z_{0})\|^2=\lim\limits_{k\to\infty}\langle f(z_{k}),f(z_{0})\rangle=0\Rightarrow f(z_{0})=0.$$By the arbitrariness of $z_{0}\in U\backslash\{0\}$ and the continuity of $f$, it follows that $f(z)=0$ for all $z\in U$. The proof is complete.
	\end{proof}
	
		\section{Power-quasinormality index sets}
		
In this section, we use the operator matrix representation from Proposition \ref{MR} to determine the positive integers $n$ for which $T$ is $n$-power quasinormal. For this purpose, we define the {\em power-quasinormality index set} of $T\in\mathbb{B}(\mathcal{H})$ as 
	$$\mathcal{PQ}(T)=\{n\in\mathbb{N}:[T^n,T^\ast T]=0\}.$$
	 It is not surprising that $\mathcal{PQ}(T)$ can be empty (see Example \ref{pqsetempty}), since not every operator is $n$-power quasinormal for some $n\in\mathbb{N}$. We begin with the basic properties of $\mathcal{PQ}(T)$.
	
	\begin{proposition}\label{baspqset}
		Let $T\in\mathbb{B}(\mathcal{H})$. Then the following properties hold:
		\begin{itemize}
			\item[(i)] If $\lambda$ is a nonzero complex number, then $\mathcal{PQ}(\lambda T)=\mathcal{PQ}(T)$.
			\item[(ii)] If $U\in\mathbb{B}(\mathcal{H})$ is unitary, then $\mathcal{PQ}(UTU^\ast)=\mathcal{PQ}(T)$.
			\item[(iii)] If $S\in\mathbb{B}(\mathcal{H})$, then $\mathcal{PQ}(T\oplus S)=\mathcal{PQ}(T)\bigcap\mathcal{PQ}(S).$
			\item[(iv)] If $m,n\in\mathcal{PQ}(T)$, then $m+n\in\mathcal{PQ}(T)$.
			\item[(v)] If $T$ is injective and $m,n\in\mathcal{PQ}(T)$ with $m-n\in\mathbb{N}$, then $m-n\in\mathcal{PQ}(T)$.
		\end{itemize}
	\end{proposition}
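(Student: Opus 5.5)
Items (i)--(iv) are direct computations with the defining identity $[T^n,T^\ast T]=0$; item (v) is the only one that needs an argument.

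For (i), note that $(\lambda T)^n=\lambda^nT^n$ and $(\lambda T)^\ast(\lambda T)=|\lambda|^2T^\ast T$. Hence $[(\lambda T)^n,(\lambda T)^\ast(\lambda T)]=\lambda^n|\lambda|^2[T^n,T^\ast T]$. Since $\lambda\neq0$, the two commutators vanish together.

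For (ii), use $(UTU^\ast)^n=UT^nU^\ast$ and $(UTU^\ast)^\ast(UTU^\ast)=UT^\ast TU^\ast$. These give $[(UTU^\ast)^n,(UTU^\ast)^\ast(UTU^\ast)]=U[T^n,T^\ast T]U^\ast$, and conjugation by a unitary is injective.

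For (iii), compute blockwise on $\mathcal H\oplus\mathcal H$. We have $(T\oplus S)^n=T^n\oplus S^n$ and $(T\oplus S)^\ast(T\oplus S)=T^\ast T\oplus S^\ast S$, so the commutator is $[T^n,T^\ast T]\oplus[S^n,S^\ast S]$. A direct sum vanishes exactly when both summands do.

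For (iv), write $A=T^\ast T$. If $T^m$ and $T^n$ both commute with $A$, then so does their product $T^{m+n}=T^mT^n$:
$$T^{m+n}A=T^mAT^n=AT^{m+n}.$$

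For (v), set $k=m-n\in\mathbb N$, so $T^m=T^kT^n$. Using $n,m\in\mathcal{PQ}(T)$ gives
$$T^kAT^n=T^kT^nA=T^mA=AT^m=AT^kT^n,$$
that is, $(T^kA-AT^k)T^n=0$. From this identity we can only cancel $T^n$ on the right when $\operatorname{ran}T^n$ is dense. Injectivity of $T$ does not give that directly, so the plan is to move the cancellation to the left side via adjoints.

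Taking adjoints, and using $A^\ast=A$, we get $(T^\ast)^n(AT^{\ast k}-T^{\ast k}A)=0$. This tells us $\operatorname{ran}[A,T^{\ast k}]\subseteq\ker(T^\ast)^n$, which is again not obviously trivial. So instead we go back to the relation $T^kAT^n=AT^kT^n$ and look for an equation with $T^n$ on the left.

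Since $n\in\mathcal{PQ}(T)$, $A$ commutes with $T^n$. Then
$$T^n(T^kA)=T^{k}T^nA=T^kAT^n=AT^kT^n=AT^nT^k=T^nAT^k,$$
where the middle equality is the relation just obtained and the last two use $[T^n,T^k]=0$ and $[T^n,A]=0$. Thus $T^n(T^kA-AT^k)=0$. Because $T$ is injective, $T^n$ is injective, so $T^kA=AT^k$, which means $k=m-n\in\mathcal{PQ}(T)$.

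The main care point is exactly this step: rewriting the identity so that $T^n$ appears on the left. That placement is what lets injectivity of $T$ be used, rather than the density of $\operatorname{ran}T^n$, which would be needed if $T^n$ stayed on the right.
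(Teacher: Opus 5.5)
Your proof is correct and matches the paper's: items (i)--(iv) are the direct computations the paper leaves to the definition, and for (v) the paper likewise writes $T^n(T^{m-n}T^\ast T-T^\ast TT^{m-n})=T^mT^\ast T-T^nT^\ast TT^{m-n}=T^mT^\ast T-T^\ast TT^m=0$ and cancels the injective $T^n$ on the left. Your detour through the right-factored relation and adjoints is not needed: commuting $T^n$ past $T^\ast T$ gives the left-factored identity in one line.
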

	\begin{proof}
		Only (v) requires proof; the others follow directly from the definition of $\mathcal{PQ}(T)$. From $m,n\in\mathcal{PQ}(T)$, it follows that 
		$$T^n(T^{m-n}T^\ast T-T^\ast TT^{m-n})=T^mT^\ast T-T^nT^\ast TT^{m-n}=T^mT^\ast T-T^\ast TT^{m}=0.$$
		Since $T$ is injective, so is $T^n$. Thus, from the above equation, $T^{m-n}T^\ast T-T^\ast TT^{m-n}=0$, as desired.
	\end{proof}
	\begin{remark}\label{squarezero}
		The injectivity of $T$ in (v) cannot be removed. Indeed, for the non-invertible matrix $T=\begin{bmatrix}
			0&1\\0&0
		\end{bmatrix}$, we have $T^2=0$, so $\mathcal{PQ}(T)=\{2,3,4,\cdots\}$, but $1=3-2\not\in\mathcal{PQ}(T)$.
	\end{remark}
	We now provide an explicit description of $\mathcal{PQ}(T)$.
	\begin{theorem}\label{mr4}
		Let $n\in\mathbb{N}$, and let $T\in\mathbb{B}(\mathcal{H})$ be an $n$-power quasinormal operator. Then there exist unique positive integers $d$ and $q$ such that 
		$$\mathcal{PQ}(T)=\{d(q+k):k\in\mathbb{N}_{0}\}=\{dq,d(q+1),d(q+2),\cdots\}.$$
		More precisely, $d=\min\mathcal{PQ}(\widetilde{T})$ and $q=\frac{m}{d}$, where $m=\min\mathcal{PQ}(T)$ and $\widetilde{T}$ is given by \eqref{pqset}.	
	\end{theorem}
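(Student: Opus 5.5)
The plan is to work with the matrix representation \eqref{mr5} from Proposition \ref{MR}, taken with respect to the decomposition $\mathcal H=\overline{\operatorname{ran}T^m}\oplus\ker(T^\ast)^m$, where $m=\min\mathcal{PQ}(T)$. This set is nonempty because $n\in\mathcal{PQ}(T)$. I expect this to be the decomposition referred to in \eqref{pqset}. Proposition \ref{MR} then gives
$$T=\begin{bmatrix}\widetilde T&X\\0&N\end{bmatrix},$$
where $\widetilde T$ is injective and $m$-power quasinormal, $\widetilde T^\ast X=0$, and $N^m=0$. In particular $\mathcal{PQ}(\widetilde T)\neq\emptyset$, so $d=\min\mathcal{PQ}(\widetilde T)$ is well defined.

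\emph{Step 1: $\mathcal{PQ}(\widetilde T)=d\mathbb N$.} By Proposition \ref{baspqset}(iv), $\mathcal{PQ}(\widetilde T)$ is closed under addition, so $d\mathbb N\subseteq\mathcal{PQ}(\widetilde T)$. Because $\widetilde T$ is injective, Proposition \ref{baspqset}(v) shows that $\mathcal{PQ}(\widetilde T)$ is also closed under positive differences. Take $k\in\mathcal{PQ}(\widetilde T)$ and write $k=ad+r$ with $0\le r<d$. Subtracting $d$ repeatedly stays inside the set. If $r>0$, this eventually gives $r\in\mathcal{PQ}(\widetilde T)$ with $r<d$, contradicting the minimality of $d$. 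Hence $r=0$ and $d\mid k$.

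\emph{Step 2: the block form of the commutator.} Since $\widetilde T^\ast X=0$, we have
$$T^\ast T=\widetilde T^\ast\widetilde T\oplus A,\qquad A=X^\ast X+N^\ast N.$$
By \eqref{mr2}, $T^k=\begin{bmatrix}\widetilde T^k&Y_k\\0&N^k\end{bmatrix}$ with $Y_k=\sum_{i=0}^{k-1}\widetilde T^{k-1-i}XN^i$. For $k\ge m$ we have $N^k=0$, and therefore $Y_k=\widetilde T^{k-m}Y_m$. A direct computation then gives, for $k\ge m$,
$$[T^k,T^\ast T]=\begin{bmatrix}[\widetilde T^k,\widetilde T^\ast\widetilde T]& \widetilde T^{k-m}Y_mA-\widetilde T^\ast\widetilde T\,\widetilde T^{k-m}Y_m\\0&0\end{bmatrix}.$$
Taking $k=m\in\mathcal{PQ}(T)$ yields two facts: $m\in\mathcal{PQ}(\widetilde T)$, so $d\mid m$, and $Y_mA=\widetilde T^\ast\widetilde T\,Y_m$.

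\emph{Step 3: conclusion.} Suppose first that $k\in\mathcal{PQ}(T)$. Then $k\ge m$ by minimality, and the $(1,1)$ entry above shows $k\in\mathcal{PQ}(\widetilde T)$, so $d\mid k$. Conversely, suppose $k\ge m$ and $d\mid k$. Then $k\in\mathcal{PQ}(\widetilde T)$, so the $(1,1)$ entry vanishes. If $k=m$ the $(1,2)$ entry vanishes by Step 2. If $k>m$, then $d\mid k-m$, so $k-m\in\mathcal{PQ}(\widetilde T)$ by Step 1. This lets us commute $\widetilde T^{k-m}$ past $\widetilde T^\ast\widetilde T$, and the $(1,2)$ entry becomes $\widetilde T^{k-m}(Y_mA-\widetilde T^\ast\widetilde T Y_m)=0$.

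Hence $\mathcal{PQ}(T)=\{k\ge m: d\mid k\}$, and with $q=m/d$ this is $\{dq,d(q+1),\dots\}$. Uniqueness of $d$ and $q$ is immediate: $dq$ is the minimum of the set and $d$ is the gap between its two smallest elements.

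The main obstacle is the off-diagonal entry in Step 3. The key observations are that $Y_k$ factors as $\widetilde T^{k-m}Y_m$ once $N^k=0$, and that the condition $Y_mA=\widetilde T^\ast\widetilde T\,Y_m$ inherited from $k=m$ propagates to all larger multiples of $d$.
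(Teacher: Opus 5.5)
Your proposal is correct and follows the paper's proof essentially step for step. You use the same decomposition with respect to $m=\min\mathcal{PQ}(T)$, the same division-algorithm argument via Proposition~\ref{baspqset}(iv),(v) to get $\mathcal{PQ}(\widetilde T)=d\mathbb N$, and the same factorization $Y_k=\widetilde T^{k-m}Y_m$ to carry the off-diagonal identity from $k=m$ to larger multiples of $d$; your explicit uniqueness remark and compact block form of $[T^k,T^\ast T]$ are only presentational refinements.
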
	
	\begin{proof}
		Since $T$ is $n$-power quasinormal, we have $n\in\mathcal{PQ}(T)$; hence $\mathcal{PQ}(T)$ is nonempty. Let $m=\min\mathcal{PQ}(T)$. Then $T$ is $m$-power quasinormal, and by Proposition \ref{MR}, \begin{equation}\label{pqset}
			T=\begin{bmatrix}
				\widetilde{T}&X\\0&N
			\end{bmatrix}
		\end{equation}
		with respect to the space decomposition $\mathcal{H}=\overline{\operatorname{ran}T^m}\oplus\ker (T^\ast)^m$. Since $\widetilde{T}$ is an injective $m$-power quasinormal operator, we have $m\in\mathcal{PQ}(\widetilde{T})$, so $\mathcal{PQ}(\widetilde{T})$ is nonempty. Let $d=\min\mathcal{PQ}(\widetilde{T})$. If $p\in\mathcal{PQ}(\widetilde{T})$, then by the division algorithm, $p=ad+r$ for some integers $a$ and $r$ with $0\le r<d$. Proposition \ref{baspqset} (v) implies $r\in\mathcal{PQ}(\widetilde{T})$, since $r=p-ad$ and $p,d\in\mathcal{PQ}(\widetilde{T})$. Since $d$ is the minimum of $\mathcal{PQ}(\widetilde{T})$, we must have $r=0$; otherwise, a contradiction. Thus $p=ad$, so by Proposition \ref{baspqset} (iv), $\mathcal{PQ}(\widetilde{T})=\{d,2d,3d,\cdots\}$. In particular, since $m\in\mathcal{PQ}(\widetilde{T})$, we have $m=dq$ for some $q\in\mathbb{N}$. Therefore, if we can prove $\mathcal{PQ}(T)=\mathcal{PQ}(\widetilde{T})\bigcap\{n\in\mathbb{N}:n\ge m\}$, then the desired result follows. Indeed, from $\widetilde{T}^\ast X=0$, together with \eqref{mr} and \eqref{mr2}, a direct computation shows that for every $k\in\mathbb{N}$, $k\in\mathcal{PQ}(T)$ if and only if $T^k T^\ast T=T^\ast TT^k$ if and only if
		\begin{equation}\label{pqset1}
			\begin{cases}
				\widetilde{T}^k\widetilde{T}^\ast\widetilde{T}=\widetilde{T}^\ast\widetilde{T}\widetilde{T}^k\\
				\widetilde{T}^\ast\widetilde{T}\sum\limits_{i=0}^{k-1}\widetilde{T}^{k-1-i}XN^{i}=\sum\limits_{i=0}^{k-1}\widetilde{T}^{k-1-i}XN^{i}(X^\ast X+N^\ast N)\\
				N^k(X^\ast X+N^\ast N)=(X^\ast X+N^\ast N)N^k
			\end{cases}.
		\end{equation}Consequently, if $k_{1}\in\mathcal{PQ}(T)$, then the first equation above together with  $m=\min\mathcal{PQ}(T)$ yields $k_{1}\in\mathcal{PQ}(\widetilde{T})\bigcap\{n\in\mathbb{N}:n\ge m\}$. Thus $\mathcal{PQ}(T)\subseteq\mathcal{PQ}(\widetilde{T})\bigcap\{n\in\mathbb{N}:n\ge m\}$. For the reverse inclusion, take $k_{2}\in\mathcal{PQ}(\widetilde{T})\bigcap\{n\in\mathbb{N}:n\ge m\}$ (If $k_{2}=m$, then clearly $k_{2}\in\mathcal{PQ}(T)$; hence we assume $k_{2}>m$). Then
		\begin{equation}\label{pqset2}
			\widetilde{T}^{k_2}\widetilde{T}^\ast\widetilde{T}=\widetilde{T}^\ast\widetilde{T}\widetilde{T}^{k_2}.
		\end{equation} Since $N^m=0$ and $k_{2}> m$, it follows that 
		\begin{equation}\label{pqset3}
			N^{k_{2}}(X^\ast X+N^\ast N)=(X^\ast X+N^\ast N) N^{k_{2}}.
		\end{equation} Moreover, because $m\in\mathcal{PQ}(T)$, the second equation in \eqref{pqset1} gives \begin{equation}\label{commute3}
			\widetilde{T}^\ast\widetilde{T}\sum\limits_{i=0}^{m-1}\widetilde{T}^{m-1-i}XN^{i}=\sum\limits_{i=0}^{m-1}\widetilde{T}^{m-1-i}XN^{i}(X^\ast X+N^\ast N).
		\end{equation} 
 Since $m\in\mathcal{PQ}(\widetilde{T})$, Proposition \ref{baspqset} (v) yields  $k_{2}-m\in\mathcal{PQ}(\widetilde{T})$, i.e.,
		\begin{equation}\label{commute1}
			\widetilde{T}^{k_{2}-m}\widetilde{T}^\ast\widetilde{T}=\widetilde{T}^\ast\widetilde{T}\widetilde{T}^{k_{2}-m}.
		\end{equation}
		Also, from $N^m=0$, we have \begin{equation}\label{commute2}
			\sum\limits_{i=0}^{k_{2}-1}\widetilde{T}^{k_{2}-1-i}XN^{i}=\sum\limits_{i=0}^{m-1}\widetilde{T}^{k_{2}-1-i}XN^{i}+\sum\limits_{i=m}^{k_{2}-1}\widetilde{T}^{k_{2}-1-i}XN^{i}=\widetilde{T}^{k_{2}-m}\sum\limits_{i=0}^{m-1}\widetilde{T}^{m-1-i}XN^{i}.
		\end{equation}
		Using \eqref{commute3}, \eqref{commute1} and \eqref{commute2}, we obtain
		\begin{align*}
			\widetilde{T}^\ast\widetilde{T}\sum\limits_{i=0}^{k_{2}-1}\widetilde{T}^{k_{2}-1-i}XN^{i}&=\widetilde{T}^\ast\widetilde{T}\widetilde{T}^{k_{2}-m}\sum\limits_{i=0}^{m-1}\widetilde{T}^{m-1-i}XN^{i}=\widetilde{T}^{k_{2}-m}\widetilde{T}^\ast\widetilde{T}\sum\limits_{i=0}^{m-1}\widetilde{T}^{m-1-i}XN^{i}\\&=\widetilde{T}^{k_{2}-m}\sum\limits_{i=0}^{m-1}\widetilde{T}^{m-1-i}XN^{i}(X^\ast X+N^\ast N)\\&=\sum\limits_{i=0}^{m-1}\widetilde{T}^{k_{2}-1-i}XN^{i}(X^\ast X+N^\ast N).
		\end{align*}
		Together with $N^m=0$, this gives
		$$\widetilde{T}^\ast\widetilde{T}\sum\limits_{i=0}^{k_{2}-1}\widetilde{T}^{k_{2}-1-i}XN^{i}=\sum\limits_{i=0}^{k_{2}-1}\widetilde{T}^{k_{2}-1-i}XN^{i}(X^\ast X+N^\ast N).$$
		Combining this identity with \eqref{pqset2}, \eqref{pqset3} and \eqref{pqset1}, we conclude $k_{2}\in\mathcal{PQ}(T)$. This proves the reverse inclusion, and hence the desired equality follows.
	\end{proof}	
	
	A direct consequence of Theorem \ref{mr4} is a characterization of the set of all $n\in\mathbb{N}$ for which $T^n$ is normal.
	\begin{corollary}
	Let $n\in\mathbb{N}$, and let $T\in\mathbb{B}(\mathcal{H})$ be an $n$-power normal operator. Then there exist unique positive integers $d$ and $q$ such that $$\{k\in\mathbb{N}:[T^k,T^\ast]=0\}=\{d(q+k_{0}),d(q+k_{0}+1),d(q+k_{0}+2),\cdots\},$$ where $k_{0}=\min\{k\in\mathbb{N}_{0}:\overline{\operatorname{ran}T^{d(q+k)}}=\overline{\operatorname{ran}T^{d(q+k)+1}}\}$.
	\end{corollary}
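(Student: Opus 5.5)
The plan is to derive the corollary from Theorem \ref{mr4} together with Theorem \ref{coromain}, after expressing the normality set $\mathcal{N}(T):=\{k\in\mathbb{N}:[T^k,T^\ast]=0\}$ through $\mathcal{PQ}(T)$ and the range condition. Since $T$ is $n$-power normal, $[T^n,T^\ast]=0$ gives $[T^n,T^\ast T]=T^\ast[T^n,T]+[T^n,T^\ast]T=0$. Thus $T$ is $n$-power quasinormal, and Theorem \ref{mr4} supplies unique $d,q$ with $\mathcal{PQ}(T)=\{d(q+k):k\in\mathbb{N}_0\}$. The same identity shows $\mathcal{N}(T)\subseteq\mathcal{PQ}(T)$. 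Now recall Jibril's result that $[T^k,T^\ast]=0$ if and only if $T^k$ is normal. Applying Theorem \ref{coromain} with $k$ in place of $n$, for $k\in\mathcal{PQ}(T)$ we get that $T^k$ is normal if and only if $\overline{\operatorname{ran}T^k}=\overline{\operatorname{ran}T^{k+1}}$. Hence
\[
\mathcal{N}(T)=\{d(q+k):k\in\mathbb{N}_0,\ \overline{\operatorname{ran}T^{d(q+k)}}=\overline{\operatorname{ran}T^{d(q+k)+1}}\}.
\]
This set is nonempty because it contains $n$, so $k_0$ is well defined.

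It remains to show that this set is upward closed in $k$, which reduces to the following claim: if $\overline{\operatorname{ran}T^j}=\overline{\operatorname{ran}T^{j+1}}$, then $\overline{\operatorname{ran}T^{i}}=\overline{\operatorname{ran}T^{i+1}}$ for all $i\ge j$. By induction it suffices to pass from $j$ to $j+1$. Using continuity of $T$,
\[
\overline{\operatorname{ran}T^{j+1}}=\overline{T(\operatorname{ran}T^j)}=\overline{T(\overline{\operatorname{ran}T^j})}=\overline{T(\overline{\operatorname{ran}T^{j+1}})}=\overline{\operatorname{ran}T^{j+2}}.
\]
The only care needed is the middle equality, $\overline{T(\overline{A})}=\overline{T(A)}$, which holds for any subset $A$ since $T$ is continuous. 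With the claim in hand, $d(q+k_0)\in\mathcal{N}(T)$ implies $d(q+k)\in\mathcal{N}(T)$ for every $k\ge k_0$. Combined with the minimality of $k_0$, this gives exactly the set displayed in the corollary.

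For uniqueness of $d$ and $q$, I would argue as follows. The corollary's right-hand side is written in terms of $d$ and $q$ only through the integers $d(q+k_0+j)$. So uniqueness means the pair is the one from Theorem \ref{mr4}, with $d$ the common difference and $q$ fixed by $k_0$'s defining minimum over $\mathbb{N}_0$. If the set is $\{a,a+d,a+2d,\dots\}$, then $d$ is recovered as the difference of its two smallest elements. The starting point is $a=d(q+k_0)$, and with $k_0$ defined relative to $q$ as in the statement, this is the uniqueness inherited from Theorem \ref{mr4}. The main obstacle is only bookkeeping: the expression $d(q+k_0)$ alone does not determine $q$ and $k_0$ separately, so the uniqueness must be read as for the pair $(d,q)$ coming from Theorem \ref{mr4}.
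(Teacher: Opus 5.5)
Your proposal is correct and takes essentially the same route as the paper. Both use Theorem \ref{mr4} to place $\{k\in\mathbb{N}:[T^k,T^\ast]=0\}$ inside $\mathcal{PQ}(T)=\{d(q+k):k\in\mathbb{N}_0\}$, Theorem \ref{coromain} to characterize membership by $\overline{\operatorname{ran}T^{k}}=\overline{\operatorname{ran}T^{k+1}}$, and an induction showing that this range condition persists once it holds. Your extra details (the commutator identity giving the inclusion, the nonemptiness that makes $k_0$ well defined, the continuity step $\overline{T(\overline{A})}=\overline{T(A)}$, and reading uniqueness as inherited from Theorem \ref{mr4}) fill in or match what the paper leaves implicit.
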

	\begin{proof}
Since every $n$-power normal operator is $n$-power quasinormal, by Theorem \ref{mr4} there exist unique positive integers $d$ and $q$ such that$$\{k\in\mathbb{N}:[T^k,T^\ast]=0\}\subseteq\{dq,d(q+1),d(q+2),\cdots\}.$$
By Theorem \ref{coromain}, we have $k_{0}=\min\{k\in\mathbb{N}_{0}:\overline{\operatorname{ran}T^{d(q+k)}}=\overline{\operatorname{ran}T^{d(q+k)+1}}\}$. Hence,$$\{k\in\mathbb{N}:[T^k,T^\ast]=0\}\subseteq\{d(q+k_{0}),d(q+k_{0}+1),d(q+k_{0}+2),\cdots\}.$$ 
Therefore, by Theorem \ref{coromain}, it suffices to prove that $\overline{\operatorname{ran}T^{d(q+k_{0}+j)}}=\overline{\operatorname{ran}T^{d(q+k_{0}+j)+1}}$ for $j\in\mathbb{N}$. Indeed, from $\overline{\operatorname{ran}T^{d(q+k_{0})}}=\overline{\operatorname{ran}T^{d(q+k_{0})+1}}$ we readily obtain $$\overline{\operatorname{ran}T^{d(q+k_{0})}}=\overline{\operatorname{ran}T^{d(q+k_{0})+1}}=\overline{\operatorname{ran}T^{d(q+k_{0})+2}}=\cdots=\overline{\operatorname{ran}T^{d(q+k_{0}+j)}}=\overline{\operatorname{ran}T^{d(q+k_{0}+j)+1}}=\cdots$$by induction, as desired.
	\end{proof}
	
The next two corollaries extend \cite[Proposition 2.6]{ahmed-bmaa-2011}.	
	
	\begin{corollary}
		Let $T\in\mathbb{B}(\mathcal{H})$ and $s,t\in \mathcal{PQ}(T)$ with $s>t$. Then $T$ is $t+k(s-t)$-power quasinormal for $k\in\mathbb{N}_{0}$. 
	\end{corollary}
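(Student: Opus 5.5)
The plan is to read the statement off Theorem \ref{mr4}. Since $t\in\mathcal{PQ}(T)$, the operator $T$ is $t$-power quasinormal. Theorem \ref{mr4} therefore applies and gives unique positive integers $d$ and $q$ with
$$\mathcal{PQ}(T)=\{dq,d(q+1),d(q+2),\cdots\}.$$
All that remains is to check that every number $t+k(s-t)$, $k\in\mathbb{N}_0$, lies in this set.

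Since $s,t\in\mathcal{PQ}(T)$, write $t=d(q+a)$ and $s=d(q+b)$ with $a,b\in\mathbb{N}_0$. The hypothesis $s>t$ forces $b>a$, so $s-t=d(b-a)$ with $b-a\in\mathbb{N}$. Then
$$t+k(s-t)=d\bigl(q+a+k(b-a)\bigr),$$
and $a+k(b-a)\in\mathbb{N}_0$ for every $k\in\mathbb{N}_0$. Hence $t+k(s-t)\in\mathcal{PQ}(T)$, which is exactly the claim.

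An alternative route avoids the full structure theorem. We have $t\in\mathcal{PQ}(T)$, and $s-t$ need not itself lie in $\mathcal{PQ}(T)$ (cf.\ Remark \ref{squarezero}), so one cannot simply add $s-t$ repeatedly using Proposition \ref{baspqset} (iv). One could instead argue by induction on $k$, writing $T^{t+(k+1)(s-t)}=T^{s}T^{k(s-t)}$ and using that $t+k(s-t)\ge t$. Theorem \ref{mr4} already packages exactly this difficulty, since it handles the non-injective part through the decomposition \eqref{pqset}. The only point needing care is therefore the bookkeeping that $s$ and $t$ are both multiples of $d$ lying at or above $dq$; I expect no real obstacle.
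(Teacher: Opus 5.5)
Your argument is correct and is the paper's proof: you invoke Theorem \ref{mr4} (applicable because $t\in\mathcal{PQ}(T)$), write $t=d(q+a)$ and $s=d(q+b)$ with $b>a$, and note that $t+k(s-t)=d\bigl(q+a+k(b-a)\bigr)$ with $a+k(b-a)\in\mathbb{N}_0$. Your side sketch of a direct induction also works and avoids Theorem \ref{mr4}: $t$ and $t+k(s-t)$ in $\mathcal{PQ}(T)$ force $[T^{k(s-t)},T^\ast T]$ to vanish on $\operatorname{ran}T^t\supseteq\operatorname{ran}T^s$, and combining this with $[T^s,T^\ast T]=0$ gives $t+(k+1)(s-t)\in\mathcal{PQ}(T)$; the paper does not need this route.
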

\begin{proof}
From Theorem \ref{mr4}, we obtain positive integers $d$, $q$ and non-negative integers $k_{1}$, $k_{2}$ such that $d(q+k_{1})=s$ and $d(q+k_{2})=t$. Then, for every $k\in\mathbb{N}_{0}$, choosing $k_{3}=k_{2}+k(k_{1}-k_{2})$ gives$$d(q+k_{3})=t+k(s-t)\in\mathcal{PQ}(T),$$
as desired.
\end{proof}

	\begin{corollary}
		Let $T\in\mathbb{B}(\mathcal{H})$. If $s,t\in \mathcal{PQ}(T)$ are coprime with $s>t$, then $T$ is $n$-power quasinormal for every positive integer $n\ge t$. 
	\end{corollary}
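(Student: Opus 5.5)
Since $\mathcal{PQ}(T)$ is nonempty, Theorem \ref{mr4} applies: there are positive integers $d$ and $q$ with
$$\mathcal{PQ}(T)=\{dq,d(q+1),d(q+2),\cdots\}.$$
The plan has three steps. First, show that $d=1$. Second, identify $m=\min\mathcal{PQ}(T)=q$ and show $q\le t$. Third, read off the conclusion from the description of $\mathcal{PQ}(T)$.

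For the first step, both $s$ and $t$ lie in $\mathcal{PQ}(T)$, so each is a multiple of $d$. Then $d$ divides $\gcd(s,t)=1$, which forces $d=1$. Hence $\mathcal{PQ}(T)=\{q,q+1,q+2,\cdots\}$, where $q=m=\min\mathcal{PQ}(T)$.

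For the second step, $t\in\mathcal{PQ}(T)$ gives $q=\min\mathcal{PQ}(T)\le t$. Therefore every integer $n\ge t$ satisfies $n\ge q$, so $n\in\mathcal{PQ}(T)$. That is, $T$ is $n$-power quasinormal for every $n\ge t$, which is the claim.

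There is essentially no obstacle once Theorem \ref{mr4} is available. The only point to check is that Theorem \ref{mr4} is applied correctly. Its hypothesis only requires $T$ to be $n$-power quasinormal for some $n$, and $t\in\mathcal{PQ}(T)$ supplies this. The divisibility of every element of $\mathcal{PQ}(T)$ by $d$ is exactly the content of that theorem, so the coprimality assumption immediately collapses the arithmetic progression to a tail of $\mathbb{N}$.
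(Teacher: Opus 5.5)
Your proposal is correct and matches the paper's proof: apply Theorem \ref{mr4}, observe that $d$ divides both $s$ and $t$, so $d=1$, and conclude that $\mathcal{PQ}(T)=\{q,q+1,q+2,\dots\}$ with $q=\min\mathcal{PQ}(T)\le t$. You also spell out the two short steps the paper leaves to the reader, namely why $d=1$ and why $q\le t$.
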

\begin{proof}
From Theorem \ref{mr4}, there exist unique positive integers $d$ and $q$ such that $$\mathcal{PQ}(T)=\{dq,d(q+1),d(q+2),\cdots\}.$$ 
Since $s$ and $t$ are coprime, we have $d=1$. Consequently,$$\mathcal{PQ}(T)=\{q,q+1,q+2,\cdots,t-1,t,t+1,t+2,\cdots\},$$
as desired.
\end{proof}	
	The following generalizes \cite[Theorem 2.17]{ahmed-bmaa-2011}.
	\begin{corollary}
		Let $T\in\mathbb{B}(\mathcal{H})$ with $\ker T\subseteq\ker T^\ast$. If $s,t\in\mathcal{PQ}(T)$ are coprime, then $T$ is quasinormal.
	\end{corollary}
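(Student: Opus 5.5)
The plan is to split off the kernel of $T$ and then run the Euclidean algorithm inside $\mathcal{PQ}$ of the injective part, using Proposition \ref{baspqset}. Theorem \ref{mr4} is not needed.

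\textbf{Step 1: $\ker T$ reduces $T$.} Decompose $\mathcal{H}=\ker T\oplus(\ker T)^\perp$.
\begin{itemize}
\item Trivially $T(\ker T)=\{0\}\subseteq\ker T$, so $\ker T$ is invariant.
\item The hypothesis $\ker T\subseteq\ker T^\ast$ gives, on taking orthogonal complements, $\overline{\operatorname{ran}T}=(\ker T^\ast)^\perp\subseteq(\ker T)^\perp$.
\item Hence $T\mathcal{H}\subseteq(\ker T)^\perp$, so $(\ker T)^\perp$ is invariant as well.
\end{itemize}
Therefore $T=0\oplus T_1$ with $T_1=T|_{(\ker T)^\perp}$. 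The operator $T_1$ is injective, since any vector of $(\ker T)^\perp$ killed by $T$ lies in $\ker T\cap(\ker T)^\perp=\{0\}$.

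\textbf{Step 2: transfer to $T_1$.} The zero operator commutes with everything, so $\mathcal{PQ}(0)=\mathbb{N}$. By Proposition \ref{baspqset} (iii),
$$\mathcal{PQ}(T)=\mathcal{PQ}(0)\cap\mathcal{PQ}(T_1)=\mathcal{PQ}(T_1).$$
In particular $s,t\in\mathcal{PQ}(T_1)$.

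\textbf{Step 3: Euclidean algorithm.} Since $T_1$ is injective, Proposition \ref{baspqset} (v) says $\mathcal{PQ}(T_1)$ is closed under positive differences. Starting from the pair $s,t$, repeatedly replace the larger element by the difference of the two whenever they differ. Every number produced stays in $\mathcal{PQ}(T_1)$. This subtractive algorithm terminates at $\gcd(s,t)=1$, so $1\in\mathcal{PQ}(T_1)$, and hence $1\in\mathcal{PQ}(T)$ by Step 2. That is, $[T,T^\ast T]=0$, so $T$ is quasinormal.

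If $s=t$, coprimality forces $s=t=1$, and the conclusion is immediate.

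\textbf{Main obstacle.} The only substantive point is Step 1, namely that the kernel inclusion makes $\ker T$ reducing. This is exactly what is needed to remove the injectivity requirement in Proposition \ref{baspqset} (v); Remark \ref{squarezero} shows that requirement cannot simply be dropped.
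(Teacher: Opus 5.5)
Your proof is correct. Its first step is the same as the paper's: the paper also splits $T=0\oplus S$ on $\ker T\oplus(\ker T)^\perp$, with $S$ injective. Its remark $P_{\ker T}T=0$ is exactly your inclusion $\operatorname{ran}T\subseteq(\ker T^\ast)^\perp\subseteq(\ker T)^\perp$.

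The two arguments differ at the arithmetic step. The paper invokes Theorem~\ref{mr4}: coprimality of $s,t$ forces $d=1$, so $\mathcal{PQ}(T)=\{q,q+1,q+2,\dots\}$. One application of Proposition~\ref{baspqset}~(v) to the consecutive pair $q,q+1\in\mathcal{PQ}(S)$ then gives $1$. You instead apply (v) repeatedly to $s,t$ themselves through the subtractive Euclidean algorithm, using only Proposition~\ref{baspqset}~(iii) and (v).

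Your route is more elementary and self-contained. Theorem~\ref{mr4} rests on the block form of Proposition~\ref{MR} and on the identity $\mathcal{PQ}(T)=\mathcal{PQ}(\widetilde T)\cap\{k\in\mathbb{N}:k\ge m\}$. None of that is needed once splitting off $\ker T$ has restored injectivity; the division-algorithm step inside the proof of Theorem~\ref{mr4} is essentially your Step 3.

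Your argument also shows slightly more. Under $\ker T\subseteq\ker T^\ast$, the set $\mathcal{PQ}(T)=\mathcal{PQ}(T_1)$ is closed under sums and positive differences. So whenever it is nonempty it equals $\{d,2d,3d,\dots\}$ with $d=\min\mathcal{PQ}(T)$, i.e.\ $q=1$ in Theorem~\ref{mr4}. In exchange, the paper's route presents the corollary as a direct consequence of the general description of $\mathcal{PQ}(T)$ that organizes that section.
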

	\begin{proof}		
	Since $\ker T\subseteq\ker T^\ast$, we readily obtain $P_{\ker T}T=0$. Consequently, with respect to the space decomposition $\mathcal{H}=\ker T\oplus \ker T^\perp$, $T$ has the matrix representation $T=\begin{pmatrix}
	0&0\\0&S
	\end{pmatrix}$. Moreover, since $s,t\in\mathcal{PQ}(T)$ are coprime, Theorem \ref{mr4} yields a positive integer $q$ such that $\mathcal{PQ}(T)=\{q,q+1,q+2,\cdots\}$. Hence $q,q+1\in\mathcal{PQ}(S)$. Since $S$ is injective, Proposition \ref{baspqset} (v) gives $1=q+1-q\in\mathcal{PQ}(S)$. Therefore $1\in\mathcal{PQ}(0\oplus S)=\mathcal{PQ}(T)$, i.e., $T$ is quasinormal.
	\end{proof}
	\begin{remark}
	If the condition $\ker T\subseteq\ker T^\ast$ is removed, the result generally fails. Indeed, in Example \ref{exdq}, taking $d=1$ and $q=2$, we readily see that the coprime numbers $2,3\in\mathcal{PQ}(T_{1,2})$, but $1\notin\mathcal{PQ}(T_{1,2})$. That is, $T_{1,2}$ is not quasinormal. 
	\end{remark}
	
	At the end of this section, we observe that, given any positive integers $d$ and $q$, one can find a matrix $T_{d,q}$ satisfying $$\mathcal{PQ}(T_{d,q})=\{dq,d(q+1),d(q+2),\cdots\}.$$
This allows us to readily obtain some facts. For example, there exists an operator $T_{2,1}$ which is $n$-power quasinormal precisely for even $n$, and fails to be $m$-power quasinormal for every odd $m$; there exists an operator $T_{3,1}$ which is $3$-power quasinormal but not $4$-power quasinormal, and so on.
	
	\begin{example}\label{exdq}
		Suppose $d,q\in\mathbb{N}$. Let $S_{d}=\begin{bmatrix}
			1&0\\0&1
		\end{bmatrix}$ if $d=1$, and $S_{d}=\begin{bmatrix}
			1&1\\0&\omega_{d}
		\end{bmatrix}$ if $d\ge2$, where $\omega_{d}=e^{\frac{2\pi\mathrm{i}}{d}}$. For $d\ge 2$ and $k\in\mathbb{N}$, a direct calculation gives $S_{d}^k=\begin{bmatrix}
			1&\frac{1-(\omega_{d})^k}{1-\omega_{d}}\\0&(\omega_{d})^k
		\end{bmatrix}$ and $S_{d}^\ast S_{d}=\begin{bmatrix}
			1&1\\1&2
		\end{bmatrix}$. Moreover, it is easy to verify that $[S_d^k, S_d^\ast S_d]=0$ if and only if $(\omega_d)^k=1$, which is equivalent to \(d \mid k\). The equality \(\mathcal{PQ}(S_1)=\{1,2,3,\dots\}\) is immediate. Consequently, for every $d\in\mathbb{N}$, $$\mathcal{PQ}(S_{d})=\{k\in\mathbb{N}:d\mid k\}=\{d,2d,3d,\cdots\}.$$ On the other hand, let $\{e_{i}\}_{i=\{1,2,\cdots,dq\}}$ be the standard basis of $\mathbb{C}^{dq}$, and let $J_{dq}$ be the nilpotent Jordan block of order $dq$. Then $J_{dq}e_{1}=0$ and $J_{dq}e_{i}=e_{i-1}$ for $i=2,3,\cdots,dq$. For $1\le k<dq$, one can readily get that $J_{dq}^k J_{dq}^* J_{dq} e_{k+1}=J_{dq}^k e_{k+1}=e_1$ but $J_{dq}^* J_{dq} J_{dq}^k e_{k+1} = 0$. Thus $k \notin\mathcal{PQ}(J_{dq})$. For $k \ge dq$, $J_{dq}^k = 0$, so $$\mathcal{PQ}(J_{dq})=\{k\in\mathbb{N}:k\ge dq\}=\{dq, dq+1, dq+2, \dots\}.$$
		Now let $T_{d,q} = S_d \oplus J_{dq}$. Proposition \ref{baspqset} (iii) then gives
		$$
		\mathcal{PQ}(T_{d,q})=\mathcal{PQ}(S_{d})\bigcap\mathcal{PQ}(J_{dq})=\{dq,d(q+1),d(q+2),\cdots\},
		$$
		as desired.	
	\end{example}
	
	\section{On the $n$-th root problem}
	
	Let $T=U|T|$ be the polar decomposition of $T$, and let $\lambda\in[0,1]$. Ch\={o} and Tanahashi \cite{Cho-Tanahashi-SMJ-2002} called the operator $\Delta_{\lambda}(T)=|T|^{\lambda}U|T|^{1-\lambda}$ the {\em $\lambda$-Aluthge transform} of $T$. In particular, $\Delta_{0}(T)=T$, $\Delta_{\frac{1}{2}}(T)$ is the  {\em Aluthge transform} of $T$ \cite{Aluthge-IEOT-1990} and $\Delta_{1}(T)=|T|U$ is the {\em Duggal transform} of $T$ \cite{Foias-PJM-2003}. The power quasinormality of an operator can be characterized in terms of the powers of its $\lambda$-Aluthge transform. 
	
	\begin{theorem}\label{char}
		Let $\lambda\in(0,1]$, $n\in\mathbb{N}$ and $T\in\mathbb{B}(\mathcal{H})$. Then $T$ is $n$-power quasinormal if and only if $(\Delta_{\lambda}(T))^n=T^n$.
	\end{theorem}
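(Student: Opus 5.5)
The plan is to reduce both directions to one intertwining identity between $\Delta_{\lambda}(T)$ and $T$, and then use continuous functional calculus for the positive operator $|T|$.

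Let $T=U|T|$ be the polar decomposition, so that $\ker U=\ker|T|=\ker T$. First I will record that
$$\Delta_{\lambda}(T)\,|T|^{\lambda}=|T|^{\lambda}U|T|^{1-\lambda}|T|^{\lambda}=|T|^{\lambda}U|T|=|T|^{\lambda}T .$$
By induction this gives $(\Delta_{\lambda}(T))^n|T|^{\lambda}=|T|^{\lambda}T^n$ for every $n\in\mathbb{N}$. I also need the standard functional-calculus fact: if $B\in\mathbb{B}(\mathcal{H})$ commutes with a positive operator $A$, then $B$ commutes with $f(A)$ for every continuous $f$ on $\sigma(A)$. This holds because $B$ commutes with every polynomial in $A$, and polynomials converge uniformly to $f$ on $\sigma(A)$, so $p(A)\to f(A)$ in norm. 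Applying this with $A=|T|^2$, $f(t)=t^{\lambda/2}$, and with $A=|T|^{\lambda}$, $f(t)=t^{2/\lambda}$, gives
$$[T^n,T^\ast T]=0\iff[T^n,|T|^{\lambda}]=0 \qquad(\lambda>0).$$

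\emph{Sufficiency.} Suppose $(\Delta_{\lambda}(T))^n=T^n$. The identity above then yields $T^n|T|^{\lambda}=(\Delta_{\lambda}(T))^n|T|^{\lambda}=|T|^{\lambda}T^n$. By the equivalence, $[T^n,T^\ast T]=0$, so $T$ is $n$-power quasinormal.

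\emph{Necessity.} Suppose $[T^n,T^\ast T]=0$, so $T^n$ commutes with $|T|^{\lambda}$. Then $(\Delta_{\lambda}(T))^n|T|^{\lambda}=|T|^{\lambda}T^n=T^n|T|^{\lambda}$. Hence $(\Delta_{\lambda}(T))^n$ and $T^n$ agree on $\operatorname{ran}|T|^{\lambda}$. Since $\overline{\operatorname{ran}|T|^{\lambda}}=\overline{\operatorname{ran}|T|}=(\ker T)^{\perp}$, they agree on $(\ker T)^\perp$. It remains to check $\ker T$. For $x\in\ker T=\ker|T|$, clearly $T^nx=0$. On the other side:
\begin{itemize}
\item if $\lambda<1$, then $|T|^{1-\lambda}x=0$, so $\Delta_{\lambda}(T)x=0$;
\item if $\lambda=1$, then $\Delta_1(T)x=|T|Ux=0$, because $\ker U=\ker|T|$.
\end{itemize}
In both cases $(\Delta_{\lambda}(T))^nx=0=T^nx$, and therefore $(\Delta_{\lambda}(T))^n=T^n$ on all of $\mathcal{H}$.

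The only delicate point is the functional-calculus equivalence between commuting with $|T|^2$ and commuting with $|T|^{\lambda}$. This needs $\lambda>0$, which is exactly why $\lambda=0$ is excluded; it is also where the hypothesis $\lambda\in(0,1]$ is really used, besides the separate treatment of $\ker T$ in the endpoint case $\lambda=1$.
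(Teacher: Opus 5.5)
Your proof is correct and follows essentially the same route as the paper: the intertwining identity $(\Delta_{\lambda}(T))^n|T|^{\lambda}=|T|^{\lambda}T^n$, the functional-calculus equivalence between commuting with $|T|^2$ and commuting with $|T|^{\lambda}$, and a separate check on $\ker|T|$ with the case split $\lambda<1$ versus $\lambda=1$. You also spell out the functional-calculus step via polynomial approximation, which the paper leaves implicit.
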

	\begin{proof}
		Let $T=U|T|$ be the polar decomposition of $T$. Since $\Delta_{\lambda}(T)|T|^\lambda=|T|^\lambda U|T|^{1-\lambda}|T|^\lambda=|T|^\lambda U|T|=|T|^\lambda T$, we obtain
		\begin{equation}\label{indentity}
			(\Delta_{\lambda}(T))^n|T|^\lambda=|T|^\lambda T^n
		\end{equation}
		for all $n\in\mathbb{N}$ by a simple induction.
		
		{\em Necessity}. Since $T$ is $n$-power quasinormal, we have $T^n|T|^2=|T|^2T^n$. By the continuous functional calculus, this implies $T^n|T|^\lambda=|T|^\lambda T^n$. Consequently, \eqref{indentity} yields $(\Delta_{\lambda}(T))^n|T|^\lambda=T^n|T|^\lambda$, that is, $$\Big((\Delta_{\lambda}(T))^n-T^n\Big)|T|^\lambda=0.$$
		Since $\overline{\operatorname{ran}|T|^\lambda}=(\ker |T|^\lambda)^\perp=(\ker |T|)^\perp$, the above identity gives  $(\Delta_{\lambda}(T))^n=T^n$ on $(\ker |T|)^\perp$. On the other hand, if $x\in\ker |T|$, then $T^nx=T^{n-1}U|T|x=0$. For $\lambda\in(0,1)$, we also have $(\Delta_{\lambda}(T))^nx=(\Delta_{\lambda}(T))^{n-1}|T|^\lambda U|T|^{1-\lambda}x=0$. Moreover, $(\Delta_{1}(T))^nx=(\Delta_{1}(T))^{n-1}|T|Ux=0$, since $\ker U=\ker |T|$. Hence $(\Delta_{\lambda}(T))^n=T^n$ holds on $\ker |T|$ as well. Therefore, we conclude that \((\Delta_{\lambda}(T))^n = T^n\) on $\mathcal{H}=\ker|T|\oplus(\ker|T|)^\perp$.
		
		{\em Sufficiency}. From the assumption we get $(\Delta_{\lambda}(T))^n|T|^\lambda=T^n|T|^\lambda$, and using \eqref{indentity} we obtain $$T^n|T|^\lambda=|T|^\lambda T^n.$$ By the continuous functional calculus, this implies $T^n|T|^2=|T|^2 T^n$, as desired.
	\end{proof}
	
	\begin{corollary}\cite[Proposition 3.5 (i)]{ko-filomat-2023}
	Let $n\in\mathbb{N}$, and let $\{T_{k}\}$ be a sequence of $n$-power quasinormal operators. If $T_{k}$ converges to $T$ in norm, then $T$ is also $n$-power quasinormal.
	\end{corollary}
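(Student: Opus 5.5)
The statement is that the class of $n$-power quasinormal operators is norm-closed. The direct route works straight from the definition. The commutator map $S\mapsto [S^n,S^\ast S]=S^nS^\ast S-S^\ast SS^n$ is a polynomial in $S$ and $S^\ast$. It is therefore continuous in the operator norm, and the plan is simply to exhibit this continuity with an explicit estimate.

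First, I will show that $T_k\to T$ in norm gives $T_k^n\to T^n$. This uses the telescoping identity
$$S^n-T^n=\sum_{j=0}^{n-1}S^{j}(S-T)T^{n-1-j},$$
which yields $\|T_k^n-T^n\|\le nM^{n-1}\|T_k-T\|$, where $M=\sup_k\|T_k\|\vee\|T\|<\infty$; the supremum is finite because convergent sequences are bounded. Next, since $\|T_k^\ast-T^\ast\|=\|T_k-T\|$, I get $T_k^\ast T_k\to T^\ast T$ in norm, via
$$\|T_k^\ast T_k-T^\ast T\|\le \|T_k^\ast\|\,\|T_k-T\|+\|T_k^\ast-T^\ast\|\,\|T\|.$$
Products of norm-convergent bounded sequences converge to the product of the limits. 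Hence
$$0=[T_k^n,T_k^\ast T_k]\longrightarrow [T^n,T^\ast T],$$
so $[T^n,T^\ast T]=0$, which is exactly $n$-power quasinormality of $T$.

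Alternatively, to match its placement after Theorem \ref{char}, I could argue via $(\Delta_\lambda(T_k))^n=T_k^n$ with $\lambda=1$. This would require continuity of $S\mapsto |S|$ and of the polar factor $U$, but $U$ is not norm-continuous. The direct commutator argument avoids that issue, so I will use it.

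I do not expect a genuine obstacle here. The only point needing care is the uniform bound $M$, which is what makes the product estimates work.
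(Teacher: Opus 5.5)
Your proof is correct, and it takes a different, more elementary route than the paper.

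The paper derives the corollary from Theorem~\ref{char}. It considers $f(S)=(\Delta_{1/2}(S))^n-S^n$, whose zero set is exactly the class of $n$-power quasinormal operators. Continuity of $f$ then comes from the norm continuity of the Aluthge transform $S\mapsto\Delta_{1/2}(S)$, a nontrivial external result that the paper cites.

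You instead work with the defining commutator $S\mapsto[S^n,S^\ast S]$. It is a noncommutative polynomial in $S$ and $S^\ast$, and its norm continuity follows from the elementary product estimates you write down. Your argument is therefore self-contained and uses nothing beyond the definition. Applied vectorwise, the same estimates should also give closedness under strong-$\ast$ limits of (automatically bounded) sequences, which the Aluthge-transform argument does not obviously give. The paper's route mainly serves to illustrate Theorem~\ref{char}.

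On your aside: your caution about $\lambda=1$ is justified, since the Duggal transform is not norm continuous. However, the discontinuity of the polar factor $U$ does not rule out the Aluthge approach. For $\lambda=\tfrac12$ the map $S\mapsto\Delta_{1/2}(S)$ is norm continuous even though $U$ is not, and that is exactly the fact the paper relies on.
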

	\begin{proof}
	Consider the map $f$ on $\mathbb{B}(\mathcal{H})$ defined by $f(T)=(\Delta_{\frac{1}{2}}(T))^n-T^n$. Since the map $T\mapsto\Delta_{\frac{1}{2}}(T)$ is continuous \cite[Corollary 2.6]{zhou-glma-2023}, $f$ is continuous. By Theorem \ref{char}, we have $f(T_{k})=0$ for each $k$. Since $\lim\limits_{k\to\infty} T_{k}=T$ in norm, it follows from the continuity of $f$ that$$f(T)=\lim\limits_{k\to\infty}f(T_{k})=0.$$
	Hence $(\Delta_{\frac{1}{2}}(T))^n=T^n$, which means that $T$ is $n$-power quasinormal.
	\end{proof}
	
 The following result describes those $n$-power quasinormal operators for which $T^n$ is quasinormal in terms of the compatibility of the $\lambda$-Aluthge transform with taking $n$-th powers.
	
	\begin{corollary}\label{erro}
		Let $\lambda\in(0,1]$, $n\in\mathbb{N}$ and $T\in\mathbb{B}(\mathcal{H})$. If $T$ is $n$-power quasinormal, then $T^n$ is quasinormal if and only if $(\Delta_{\lambda}(T))^n=\Delta_{\lambda}(T^n)$. 
	\end{corollary}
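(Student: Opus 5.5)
By Theorem \ref{char}, the hypothesis that $T$ is $n$-power quasinormal gives $(\Delta_{\lambda}(T))^n=T^n$. So the statement reduces to showing, for this $T$, that $T^n$ is quasinormal if and only if $\Delta_{\lambda}(T^n)=T^n$. This is exactly the case ``power $1$'' of Theorem \ref{char}, applied to the operator $S=T^n$. Indeed, Theorem \ref{char} with $n=1$ says that $S$ is $1$-power quasinormal, i.e.\ quasinormal, if and only if $\Delta_{\lambda}(S)=S$. No hypothesis on $S$ is needed, and $\lambda\in(0,1]$ is as in the corollary.

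The argument then runs as follows.

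First, given that $T$ is $n$-power quasinormal, Theorem \ref{char} (with exponent $n$) yields $(\Delta_{\lambda}(T))^n=T^n$.

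Second, apply Theorem \ref{char} with exponent $1$ to the operator $T^n$. This gives that $T^n$ is quasinormal if and only if $\Delta_{\lambda}(T^n)=T^n$.

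Third, substitute $T^n=(\Delta_{\lambda}(T))^n$ into the right-hand side. The condition becomes $\Delta_{\lambda}(T^n)=(\Delta_{\lambda}(T))^n$, which is the claimed equivalence.

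There is essentially no obstacle. The only point to check is that Theorem \ref{char} is stated for arbitrary $T\in\mathbb{B}(\mathcal H)$ and arbitrary $n\in\mathbb N$, including $n=1$. Its proof uses only the polar decomposition $T=U|T|$ and the functional calculus, so it applies verbatim to $T^n$ with its own polar decomposition $T^n=V|T^n|$. It is also worth stating explicitly that $\Delta_{\lambda}(T^n)$ means $|T^n|^{\lambda}V|T^n|^{1-\lambda}$, formed from the polar decomposition of $T^n$ and not from that of $T$.
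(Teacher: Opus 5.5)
Your proof is correct and follows the paper's argument: Theorem \ref{char} with exponent $n$ gives $(\Delta_{\lambda}(T))^n=T^n$, and this is combined with the fact that $T^n$ is quasinormal if and only if $\Delta_{\lambda}(T^n)=T^n$. The one difference is that the paper cites \cite[Theorem 3.3]{zhou-glma-2023} for this fixed-point characterization, whereas you obtain it as the $n=1$ case of Theorem \ref{char} applied to $T^n$, which makes the argument self-contained.
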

	\begin{proof}
		This result follows immediately from Theorem \ref{char} and the fact that quasinormal operators are fixed points of the map $T\mapsto\Delta_{\lambda}(T)$ on $\mathbb{B}(\mathcal{H})$ for $\lambda\in(0,1]$ \cite[Theorem 3.3]{zhou-glma-2023}.
	\end{proof}
	\begin{remark}
		The case $n=1$ is trivial. For $n\ge2$, the condition $(\Delta_{\lambda}(T))^n=\Delta_{\lambda}(T^n)$ does not generally imply quasinormality of $T^n$. Indeed, let $P=\frac{1}{2}\begin{bmatrix}
			1&1\\1&1
		\end{bmatrix}$ be the projection matrix, and consider $T=\begin{bmatrix}
			1&1\\0&0
		\end{bmatrix}$ 
		with the polar decomposition $T=U|T|$, where $U=\frac{\sqrt{2}}{2}\begin{bmatrix}
			1&1\\0&0
		\end{bmatrix}$ and $|T|=\sqrt{2}P$. A direct computation gives $\Delta_{\lambda}(T)=P$. Since $T^2=T$, we have
		$$\Delta_{\lambda}(T^n)=\Delta_{\lambda}(T)=P=P^2=P^n=(\Delta_{\lambda}(T))^n.$$However, $T^n=T$ is not quasinormal, since $TT^\ast T\ne T^\ast TT$.
	\end{remark}
	Let $n\in\{2,3,\cdots\}$. Taking the matrix in Remark \ref{squarezero}, one readily sees that there exists an $n$-power quasinormal operator $T$ such that $T^n$ is quasinormal, but $T$ itself is not quasinormal. We are now in a position to use Corollary \ref{erro} and the following theorem to characterize when an $n$-power quasinormal operator $T$ with quasinormal $T^n$ is actually quasinormal. We shall need the following lemma on paranormal operators.
	\begin{lemma}\label{parapro}
		Let $n\in\mathbb N$ and let $T$ be a paranormal operator. Then for every $x\in\mathcal H$,
		\begin{itemize}
			\item[(i)] $\|Tx\|^n \le \|T^nx\|\,\|x\|^{n-1}$.
			\item[(ii)] $\|T^nx\|^{n+1} \le \|T^{n+1}x\|^n\,\|x\|$.
			\item[(iii)] $\|Tx\|\,\|T^nx\| \le \|T^{n+1}x\|\,\|x\|$.
		\end{itemize}
	\end{lemma}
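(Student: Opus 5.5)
The plan is to prove the three inequalities in the order (iii), then (i) and (ii), using only the defining paranormality inequality $\|Ty\|^2\le\|T^2y\|\,\|y\|$. The main device is the sequence $a_k=\|T^kx\|$ for a fixed $x$. Applying paranormality to $y=T^{k}x$ gives
\[
a_{k+1}^2\le a_{k+2}\,a_k \qquad (k\ge 0),
\]
so $\{a_k\}$ is log-convex wherever it is positive. All three statements are consequences of this log-convexity. First I will dispose of degenerate cases. If $a_j=0$ for some $j\ge1$, then paranormality gives, by backward induction, $a_{j-1}=0$ as long as $j-1\ge1$. Indeed, $a_{j-1}^2\le a_j a_{j-2}=0$. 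Hence $Tx=0$ and all $a_k=0$ for $k\ge1$. The case $x=0$ is trivial, so in the degenerate situation every inequality reads $0\le 0$ or $0\le$ something nonnegative. From now on I assume all $a_k>0$.

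With all $a_k>0$, the ratios $r_k=a_{k+1}/a_k$ satisfy $r_k\le r_{k+1}$, i.e.\ they are nondecreasing. Then (iii) says $a_1a_n\le a_{n+1}a_0$, i.e.\ $r_0\le a_{n+1}/a_n=r_n$, which is immediate from monotonicity. For (i), I will write $a_n/a_0=r_0r_1\cdots r_{n-1}\ge r_0^n=(a_1/a_0)^n$, which gives $a_1^n\le a_n a_0^{n-1}$. For (ii), I will write $a_n=a_0\,r_0\cdots r_{n-1}\le a_0\,r_n^{\,n}$, since each $r_k\le r_n$ for $k<n$. Hence $a_n\le a_0(a_{n+1}/a_n)^n$, which rearranges to $a_n^{n+1}\le a_{n+1}^n a_0$, exactly (ii).

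Alternatively, (i) and (iii) can be proved by induction on $n$ directly from paranormality without introducing ratios, but the ratio argument handles all three uniformly. The only real care point is the degenerate case where some $T^kx$ vanishes, since the ratios are then undefined. That is settled by the backward-induction observation above, which shows that vanishing at any positive power forces $Tx=0$. I expect no further obstacle.
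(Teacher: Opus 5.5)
Your proof is correct. For (ii) and (iii) it is the paper's argument: apply paranormality to $y=T^kx$, get the nondecreasing chain of ratios $\|T^{k+1}x\|/\|T^kx\|$, then multiply or compare the ratios.

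The one genuine difference is (i). The paper does not derive it from this chain; it cites the external theorem that every paranormal operator is $n$-paranormal. You instead read (i) off the same chain via $a_n/a_0=r_0\cdots r_{n-1}\ge r_0^{\,n}$, which makes the whole lemma self-contained and elementary.

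Doing (i) this way requires handling the case $T^nx=0$ for (i) as well. The paper's case split ($T^nx=0$ versus $T^nx\neq 0$) only handles that case for (ii) and (iii), where it is trivial; for (i) one also needs $Tx=0$. Your backward-induction observation, that $T^jx=0$ for some $j\ge 1$ forces $Tx=0$, supplies exactly this.
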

	
	\begin{proof}
		(i) The case $n=1$ is trivial. For $n\in\{2,3,\cdots\}$, since every paranormal operator is $n$-paranormal \cite[Theorem 1]{cho-jmr-2013}, (i) also holds.
		
		(ii) Since $T$ is paranormal, we have $\|Ty\|^2 \le \|T^2y\|\,\|y\|$ for all $y\in\mathcal H$. Setting $y = T^k x$ with $k\in\mathbb N_0$ and $x\in\mathcal H$, we get
		\begin{equation}\label{para3}
			\|T^{k+1}x\|^2 \le \|T^k x\|\,\|T^{k+2}x\|.
		\end{equation}
		If $T^n x = 0$, then the desired inequality is trivial. If $T^n x \ne 0$, then from \eqref{para3} we obtain
		\begin{equation}\label{paraineq1}
			\frac{\|Tx\|}{\|x\|}
			\le \frac{\|T^2x\|}{\|Tx\|}
			\le \frac{\|T^3x\|}{\|T^2x\|}
			\le \cdots
			\le \frac{\|T^n x\|}{\|T^{n-1}x\|}
			\le \frac{\|T^{n+1}x\|}{\|T^n x\|},
		\end{equation}
		and hence
		\[
		\frac{\|T^nx\|}{\|x\|}
		=
		\prod_{i=1}^{n} \frac{\|T^i x\|}{\|T^{i-1}x\|}
		\le
		\left(\frac{\|T^{n+1}x\|}{\|T^n x\|}\right)^n.
		\]
		Therefore the desired inequality follows.
		
		(iii) If $T^n x = 0$, inequality (iii) is trivial. If $T^n x \ne 0$, then from \eqref{paraineq1} we have
		\[
		\frac{\|Tx\|}{\|x\|} \le \frac{\|T^{n+1}x\|}{\|T^n x\|},
		\]
		that is, $\|Tx\|\,\|T^nx\| \le \|T^{n+1}x\|\,\|x\|$.
	\end{proof}
	
	\begin{theorem}\label{main1}
		Let $n\in\mathbb{N}$, and let $T\in\mathbb{B}(\mathcal{H})$ be an $n$-power quasinormal operator. If $T$ is paranormal, then $T$ is quasinormal.
	\end{theorem}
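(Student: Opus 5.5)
The plan is to reduce to the block form of Proposition~\ref{MR} and treat the injective part and the nilpotent part separately, using paranormality in each. Write
\[
T=\begin{bmatrix}\widetilde T&X\\0&N\end{bmatrix}
\quad\text{on}\quad \mathcal H=\overline{\operatorname{ran}T^n}\oplus\ker(T^\ast)^n ,
\]
with $\widetilde T$ injective and $n$-power quasinormal, $\widetilde T^\ast X=0$ and $N^n=0$. Two standard facts about paranormal operators will be used. First, the restriction of a paranormal operator to an invariant subspace is paranormal, so $\widetilde T$ is paranormal. Second, $\ker T^2=\ker T$: if $T^2x=0$, paranormality gives $\|Tx\|^2\le\|T^2x\|\,\|x\|=0$. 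Iterating the second fact gives $\ker T^n=\ker T$.

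\textbf{Step 1: $\widetilde T$ is quasinormal.} This is the heart of the argument. Since $\widetilde T^n$ commutes with $|\widetilde T|^2$, it commutes with every spectral projection $E(\Delta)$ of $|\widetilde T|$. Fix a narrow band $\Delta=[a,b]$ and take $0\neq x\in E(\Delta)$. Then $a\|x\|\le\|\widetilde Tx\|\le b\|x\|$, and $T^nx$ again lies in $E(\Delta)$.

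Consider the ratios $\|\widetilde T^{k}x\|/\|\widetilde T^{k-1}x\|$ from \eqref{paraineq1}; they are nondecreasing in $k$ by paranormality. The first ratio is at least $a$. Applying the band estimate to $\widetilde T^nx\in E(\Delta)$ shows that the $(n+1)$-st ratio is at most $b$. Hence every ratio, for $1\le k\le n+1$, lies in $[a,b]$. In particular $y=\widetilde Tx$ satisfies $\|\widetilde Ty\|/\|y\|\in[a,b]$, so the quadratic form $\langle|\widetilde T|^2y,y\rangle/\|y\|^2$ lies in $[a^2,b^2]$.

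To upgrade this to ``$\widetilde T$ maps $E(\Delta)$ essentially into $E(\Delta)$'', I would use Lemma~\ref{parapro}(i)--(iii) together with the fact that the $n$-step product of the ratios is pinned down by the $|\widetilde T|$-spectral data of $x$ and of $\widetilde T^nx$. Letting the band width tend to $0$ should then give $|\widetilde T|\widetilde T=\widetilde T|\widetilde T|$ on each spectral piece, and hence everywhere.

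I expect this to be the main obstacle. A bound on the norm ratio only controls an average of the spectral measure of $y$, not its support. If this route stalls, the fallback is Theorem~\ref{char} with $\lambda=1$, which gives $(|\widetilde T|\widetilde U)^n=\widetilde T^n$. One would then aim to show $\widetilde U$ commutes with $|\widetilde T|$ by combining this identity with the equality cases of the paranormal inequalities.

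\textbf{Step 2: $N=0$.} Take $v\in\ker(T^\ast)^n$ and look at the vector $(0,v)$. The identity $\ker T^n=\ker T$ and the triangular formula \eqref{mr2} for $T^k$ should force $\ker N^{k}$ to stabilize immediately. The plan is to show that $N\neq0$ would produce a vector $w$ with $T^2w\in\operatorname{ran}$ of the $(1,2)$ entry but $Tw\notin\ker T$. Concretely, I would pick $v$ with $Nv\neq0$ and $N^2v=0$ and test paranormality on a suitable combination $(u,v)$, where $u$ is chosen to cancel the top component via the injectivity and quasinormality of $\widetilde T$.

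\textbf{Step 3: conclusion.} With $N=0$ we have $T^\ast T=\widetilde T^\ast\widetilde T\oplus X^\ast X$, using $\widetilde T^\ast X=0$. The commutator $[T,T^\ast T]$ then vanishes if and only if
\[
[\widetilde T,\widetilde T^\ast\widetilde T]=0 \quad\text{and}\quad \widetilde T^\ast\widetilde T\,X=XX^\ast X .
\]
The first identity is Step 1. For the second, the relation from \eqref{pqset1} with $k=n$, combined with the injectivity of $\widetilde T$ and Proposition~\ref{baspqset}(v), should give $\widetilde T^\ast\widetilde T\,\widetilde T^{n-1}X=\widetilde T^{n-1}XX^\ast X$. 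Quasinormality of $\widetilde T$ lets us move $\widetilde T^{n-1}$ to the left, and injectivity then cancels it, yielding the desired equation.
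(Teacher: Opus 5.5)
\textbf{Step 2 is false, so the architecture fails.} Step 2 asserts $N=0$, and this does not hold for paranormal $n$-power quasinormal operators. Take the unilateral shift $W=\operatorname{shift}(1,1,1,\cdots)$ with $n\ge 2$. It is an isometry, hence quasinormal, paranormal and $n$-power quasinormal. Yet $\ker(W^\ast)^n=\operatorname{span}\{e_0,\dots,e_{n-1}\}$, and the compression $N$ satisfies $Ne_k=e_{k+1}$ for $0\le k\le n-2$ and $Ne_{n-1}=0$. So $N$ is the nilpotent Jordan block of order $n$, not $0$ (and $Xe_{n-1}=e_n\neq 0$). Your heuristic that $\ker T^n=\ker T$ should make $\ker N^k$ stabilize cannot work: $N$ is a compression of $T$ to a non-invariant subspace, not a restriction. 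Consequently Step 3 only treats a special case. In general, using $\widetilde T^\ast X=0$, quasinormality of $T$ amounts to three conditions: $[\widetilde T,\widetilde T^\ast\widetilde T]=0$, $\widetilde T^\ast\widetilde T X=X(X^\ast X+N^\ast N)$, and $[N,X^\ast X+N^\ast N]=0$. Your plan gives no route to the last two.

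\textbf{Step 1 is left open.} As you flag yourself, this is where the real content lies, and no argument is given. Your band estimate is correct and is exactly the paper's starting point: for $x\in E([a,b])\mathcal H$ the nondecreasing ratios are squeezed into $[a,b]$, so $a^n\|x\|\le\|T^nx\|\le b^n\|x\|$. What is missing is how to pass from this single-ratio (average) control to control of spectral supports. The paper does this in two moves.

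First, $|T^n|$ commutes with $|T|$, which follows from $T^n|T|=|T|T^n$. The band estimate together with L\"owner--Heinz then sandwiches $|T^n|$ and $|T|^n$ between the same step functions of $|T|$. Letting the mesh go to $0$ gives $|T^n|=|T|^n$. Hence $T^n$ is quasinormal and $\|T^{nm}y\|=\||T|^{nm}y\|$ for all $y$ and all $m\ge 0$.

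Second, apply this identity with $y=Tx$ and $y=|T|x$, and use $T^n|T|=|T|T^n$. This yields $\||T|^{nm}Tx\|=\||T|^{nm}|T|x\|$ for every $m$. In other words, all the moments $\int t^{2nm}$ of the spectral measures of $Tx$ and $|T|x$ with respect to $E$ coincide. Polynomials in $t^{2n}$ are dense on $[0,\|T\|]$, so $\|E(\Delta)Tx\|=\|E(\Delta)|T|x\|$ for every Borel set $\Delta$. This forces each $E(\Delta)$ to reduce $T$, and hence $T|T|=|T|T$.

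This argument runs on $T$ itself over all of $\mathcal H$. The decomposition of Proposition~\ref{MR} and the injectivity of $\widetilde T$ are never needed, so you should discard Steps 2 and 3 and carry out Step 1 directly for $T$.
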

			\begin{proof}
				The case $n=1$ is trivial, since $1$-power quasinormal operators are quasinormal operators. Assume now that $n\in\{2,3,\dots\}$. Since $T$ is $n$-power quasinormal, we have $T^n T^*T = T^*T T^n$. By the continuous functional calculus applied to $T^*T$, we obtain
				\begin{equation}\label{paracom1}
					T^n |T| = |T| T^n.
				\end{equation}
				Taking adjoints in the above equality yields $(T^*)^n T^n |T| = |T| (T^*)^n T^n$, and hence
				\begin{equation}\label{paracom2}
					|T^n|\,|T| = |T|\,|T^n|.
				\end{equation}
Consider the spectral decomposition $|T| = \int_{[0,\infty)} \lambda\, dE(\lambda)$. Let $b>a\ge 0$. We claim that
				\begin{equation}\label{keyineq}
					a^n \|x\| \le \||T^n|x\| \le b^n \|x\|,\qquad x\in E([a,b])\mathcal H.
				\end{equation}
				Indeed, take $x\in E([a,b])\mathcal H$. Then
				\begin{equation}\label{paraine1}
					\||T|x\| \ge a\|x\|,
				\end{equation}
				and by \eqref{paracom1}, we have $T^nE([a,b])=E([a,b])T^n$, so $T^n x \in E([a,b])\mathcal H$. Hence
				\begin{equation}\label{paraine2}
					\|T^{n+1}x\|=\|TT^nx\|=\||T|T^nx\| \le b\|T^nx\|.
				\end{equation}
				If $x\ne0$, then by \eqref{paraine1} and Lemma \ref{parapro} (i),
				\[
				\||T^n|x\| = \|T^nx\| \ge \frac{\|Tx\|^n}{\|x\|^{n-1}} = \frac{\||T|x\|^n}{\|x\|^{n-1}} \ge a^n \|x\|.
				\]
				If $x=0$, the left-hand side of \eqref{keyineq} is trivial. Now suppose $T^n x \ne 0$. By Lemma \ref{parapro} (ii),
				\[
				\|T^nx\| \le \left(\frac{\|T^{n+1}x\|}{\|T^nx\|}\right)^n \|x\|,
				\]
				which, together with \eqref{paraine2}, gives
				\[
				\||T^n|x\| = \|T^nx\| \le b^n \|x\|.
				\]
				If $T^n x = 0$, the right-hand side of \eqref{keyineq} is trivial. Thus the claim follows. Moreover, by \eqref{paracom2}, the subspace $E([a,b])\mathcal H$ is invariant under $|T^n|$. Combining \eqref{keyineq} with the Löwner–Heinz inequality \cite{low-mz-1934}, we obtain on $E([a,b])\mathcal H$ that
				\begin{equation}\label{keyineq2}
					a^n E([a,b]) = a^n I \le |T^n|\big|_{E([a,b])\mathcal H} \le b^n I = b^n E([a,b]).
				\end{equation}
				
				If $\|T\|=0$, then $T=0$, and the theorem is trivial. Assume now that $\|T\|>0$. Take a partition $\pi: 0 = t_0 < t_1 < \cdots < t_m = \|T\|$. Define
				\[
				A_\pi = \sum_{i=2}^m t_{i-1}^n E((t_{i-1}, t_i]), \qquad
				B_\pi = t_1^n E([0,t_1]) + \sum_{i=2}^m t_i^n E((t_{i-1}, t_i]).
				\]
				From \eqref{keyineq2}, we have
				\[
				\sum_{i=2}^m t_{i-1}^n E((t_{i-1}, t_i])
				\le |T^n|\Big|_{\Big(\bigoplus_{i=2}^m E((t_{i-1}, t_i])\mathcal H\Big)\oplus E([0,t_{1}])\mathcal{H}}
				\le \sum_{i=2}^m t_i^n E((t_{i-1}, t_i])+t_{1}^nE([0,t_{1}]),
				\]
				and consequently
				\begin{equation}\label{exp1}
					A_\pi \le |T^n| \le B_\pi.
				\end{equation}
				Since for all $\lambda\in[0,\infty)$,
				\[
				\sum_{i=2}^m t_{i-1}^n \chi_{(t_{i-1}, t_i]}(\lambda)
				\le \lambda^n
				\le t_1^n \chi_{[0,t_1]}(\lambda)
				+ \sum_{i=2}^m t_i^n \chi_{(t_{i-1}, t_i]}(\lambda),
				\]
				where $\chi_\Delta$ denotes the characteristic function of a measurable set $\Delta\subseteq[0,\infty)$, we get
				\[
				A_\pi \le |T|^n \le B_\pi.
				\]
				Combining with \eqref{exp1}, we have
				$A_{\pi}-B_{\pi}\le|T^n|-|T^n|\le B_{\pi}-A_{\pi}$, and hence
				\[
				\bigl\||T^n| - |T|^n\bigr\| \le \|B_\pi - A_\pi\|.
				\]
				Moreover,
				\[
				\begin{aligned}
					\|B_\pi - A_\pi\|
					&= \Bigl\| t_1^n E([0,t_1]) + \sum_{i=2}^m (t_i^n - t_{i-1}^n)E((t_{i-1}, t_i]) \Bigr\| \\
					&\le \max_{1\le i\le m} (t_i^n - t_{i-1}^n) \\
					&= \max_i \Bigl( (t_i - t_{i-1}) \sum_{k=0}^{n-1} t_i^{n-1-k} t_{i-1}^k \Bigr) \\
					&\le n\|T\|^{n-1} \max_i (t_i - t_{i-1}).
				\end{aligned}
				\]
				Letting $|\pi| = \max\limits_i (t_i - t_{i-1}) \to 0$, we have
				\begin{equation}\label{keyineq3}
					|T^n| = |T|^n.
				\end{equation}
				Then by \eqref{paracom1}, we obtain
				\[
				T^n (T^n)^* T^n = T^n |T^n|^2
				= T^n |T|^n |T|^n
				= |T|^n |T|^n T^n
				= |T^n|^2 T^n
				= (T^n)^* T^n T^n,
				\]
				that is, $T^n$ is quasinormal.
				
				Set $S = T^n$. From \cite[Theorem 2.2 (ii)]{stankovic-afa-2025} and \eqref{keyineq3}, for every $m\in\mathbb{N}_{0}$,
				\[
				(S^*)^m S^m = (S^*S)^m = |T^n|^{2m} = |T|^{2nm}.
				\]
				Hence, for all $y\in\mathcal H$,
				\[
				\|T^{nm} y\| = \|S^m y\| = \||T|^{nm} y\|.
				\]
				Taking $y = Tx$ in the above equation, and using \eqref{paracom1}, we get
				\begin{equation}\label{normeq}
			\||T|^{nm} T x\|
			= \|T (T^n)^m x\|
			= \||T| (T^n)^m x\|
			= \|T^{nm} |T| x\|
			= \||T|^{nm+1} x\|.
				\end{equation}
				Define measures $\mu_x$ and $\nu_x$ on the Borel sets of $[0,\|T\|]$ by
				\[
				\mu_x(\Delta) = \langle E(\Delta) Tx, Tx\rangle,\qquad
				\nu_x(\Delta) = \langle E(\Delta) |T|x, |T|x\rangle.
				\]
				Then \eqref{normeq} gives
				\[
				\int_{[0,\|T\|]} t^{2nm}\, d\mu_x
				= \langle |T|^{2nm} T x, T x\rangle
				= \langle |T|^{2nm} |T|x, |T|x\rangle
				= \int_{[0,\|T\|]} t^{2nm}\, d\nu_x
				\]
				for every $m\in\mathbb{N}_{0}$. Thus, for every real polynomial $p$,
				\begin{equation}\label{approine}
					\int_{[0,\|T\|]} p(t^{2n})\, d\mu_x
					= \int_{[0,\|T\|]} p(t^{2n})\, d\nu_x.
				\end{equation}
Let $C([0,\|T\|])$ be the set of continuous real-valued functions on $[0,\|T\|]$. Now take $f\in C([0,\|T\|])$ and define $g(s)=f(s^{1/(2n)})$ on $[0,\|T\|^{2n}]$. By the Weierstrass approximation theorem, there exists a sequence of real polynomials $\{p_k\}$ converging uniformly to $g$ on $[0,\|T\|^{2n}]$. Putting $s=t^{2n}$, the polynomials $\{p_k(t^{2n})\}$ converge uniformly to $g(t^{2n})=f(t)$ on $[0,\|T\|]$. By \eqref{approine},
				\[
				\int_{[0,\|T\|]} f\, d\mu_x
				= \int_{[0,\|T\|]} f\, d\nu_x
				\]
				for every $f\in C([0,\|T\|])$. The uniqueness in the Riesz-Markov representation theorem yields $\mu_x = \nu_x$, and hence for every Borel set $\Delta$,
				\begin{equation}\label{keyineq4}
					\|E(\Delta) T x\| = \|E(\Delta) |T| x\|.
				\end{equation}
				
				Now take $u\in E(\Delta)\mathcal H$. By \eqref{keyineq4},
				\[
				\begin{aligned}
					\|(1-E(\Delta)) T u\|^2
					&= \|T u\|^2 - \|E(\Delta) T u\|^2 \\
					&= \||T| u\|^2 - \|E(\Delta) T u\|^2 \\
					&= \|E(\Delta) |T| u\|^2 - \|E(\Delta) T u\|^2 \\
					&= 0,
				\end{aligned}
				\]
				so $T u \in E(\Delta)\mathcal H$. For $v\in (1-E(\Delta))\mathcal H$, by \eqref{keyineq4},
				\[
				\|E(\Delta) T v\| = \|E(\Delta) |T| v\| = \||T| E(\Delta) v\| = 0,
				\]
				so $T v \in (1-E(\Delta))\mathcal H$. Therefore, $E(\Delta)\mathcal H$ reduces $T$ for every Borel set $\Delta$. Consequently, for any $x\in\mathcal H$,
				\[
				E(\Delta) T x
				= E(\Delta) (T E(\Delta) x + T(1-E(\Delta)) x)
				= E(\Delta) T E(\Delta) x
				= T E(\Delta) x,
				\]
				so $E(\Delta) T = T E(\Delta)$ for every Borel set $\Delta\subseteq[0,\|T\|]$. Since $E$ is the spectral measure of $|T|$, we get $|T|T = T|T|$. Hence
				\[
				T^* T T = |T|^2 T = T |T|^2 = T T^* T,
				\]
				which means that $T$ is quasinormal. The proof is complete.
			\end{proof}
	
	\begin{corollary}\label{powerquasi}
		Let $\lambda\in(0,1]$, $n\in\{2,3,4,\cdots\}$ and let $T\in\mathbb{B}(\mathcal{H})$ be an $n$-power quasinormal operator. If $T^n$ is quasinormal, then the following are equivalent:
		\begin{itemize}
			\item[(i)] $T$ is quasinormal.
			\item[(ii)] $\|Tx\|^2\le\|T^2x\|\|x\|+\|(\Delta_{\lambda}(T))^n-\Delta_{\lambda}(T^n)\|\|x\|^2$ for all $x\in\mathcal{H}$.
			\item[(iii)]
			$|T|^2\le|T^2|+\|(\Delta_{\lambda}(T))^n-\Delta_{\lambda}(T^n)\|I$.
		\end{itemize}
	\end{corollary}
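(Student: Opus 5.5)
The key observation is that, under the standing hypotheses ($T$ is $n$-power quasinormal and $T^n$ is quasinormal), Corollary \ref{erro} gives $(\Delta_{\lambda}(T))^n=\Delta_{\lambda}(T^n)$. Hence the perturbation term $\|(\Delta_{\lambda}(T))^n-\Delta_{\lambda}(T^n)\|$ appearing in (ii) and (iii) vanishes. Condition (ii) then reads $\|Tx\|^2\le\|T^2x\|\|x\|$ for all $x$, that is, $T$ is paranormal. Condition (iii) reads $|T|^2\le|T^2|$, that is, $T$ is of class A. I will therefore prove the cycle (i) $\Rightarrow$ (iii) $\Rightarrow$ (ii) $\Rightarrow$ (i), after first recording this reduction.

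For (i) $\Rightarrow$ (iii), I will use the inclusions in \eqref{normalclass}: a quasinormal operator is of class A. This can also be seen directly. If $T$ is quasinormal, then $T$ commutes with $|T|^2$, so $|T^2|^2=T^\ast|T|^2T=T^\ast T|T|^2=|T|^4$. Hence $|T^2|=|T|^2$, and (iii) holds with the zero term.

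For (iii) $\Rightarrow$ (ii), I will use that class A operators are paranormal, again by \eqref{normalclass}. Since the extra term is zero, this gives exactly (ii). If one wants this step self-contained, it is the standard argument
\[
\|Tx\|^2=\langle |T|^2x,x\rangle\le\langle|T^2|x,x\rangle\le\||T^2|x\|\,\|x\|=\|T^2x\|\,\|x\|,
\]
which uses Cauchy--Schwarz and $\||T^2|x\|=\|T^2x\|$.

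For (ii) $\Rightarrow$ (i), condition (ii) with the vanishing term says $T$ is paranormal. Since $T$ is also $n$-power quasinormal, Theorem \ref{main1} gives that $T$ is quasinormal.

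There is no real obstacle here beyond making sure that Corollary \ref{erro} applies. It applies for every $\lambda\in(0,1]$ precisely because $T$ is $n$-power quasinormal and $T^n$ is quasinormal. All the substantive work is contained in Theorem \ref{main1}.
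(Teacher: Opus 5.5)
Your proposal is correct and follows essentially the same route as the paper. It uses the cycle through (i), (ii), (iii) with the perturbation term killed by $\Delta_{\lambda}(T^n)=T^n$ together with Theorem \ref{char} (exactly the content of Corollary \ref{erro}), paranormality plus Theorem \ref{main1} for (ii) $\Rightarrow$ (i), and the fact that quasinormal operators are of class A (by \eqref{normalclass}) for (i) $\Rightarrow$ (iii). The differences are cosmetic: you remove the perturbation term at the outset rather than only in (ii) $\Rightarrow$ (i) (the paper notes its (iii) $\Rightarrow$ (ii) step needs no hypotheses), and you verify $|T^2|=|T|^2$ for quasinormal $T$ directly instead of citing \eqref{normalclass}.
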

	\begin{proof}
		We shall prove the implications in the order
	$${\rm (iii)}\Longrightarrow{\rm (ii)}
	\Longrightarrow{\rm (i)}
	\Longrightarrow{\rm (iii)}.$$		
		
	(iii) $\Rightarrow$ (ii) requires no hypotheses. Indeed, from (iii) we have 	$$\langle |T|^2x,x\rangle\le\langle|T^2|x,x\rangle+\langle\|(\Delta_{\lambda}(T))^n-\Delta_{\lambda}(T^n)\|x,x\rangle$$ for $x\in\mathcal{H}$. Applying the Cauchy-Schwarz inequality, which yields $$\langle |T^2|x,x \rangle\le\||T^2|x\|\|x\|=\|T^2x\|\|x\|,$$
	and using the identity $\langle|T|^2x,x\rangle=\||T|x\|^2=\|T\|^2$, we arrive at (ii).
	
	(ii) $\Rightarrow$ (i). Since $T^n$	is quasinormal, we have $\Delta_{\lambda}(T^n)=T^n$. By Theorem \ref{char}, we also have $(\Delta_{\lambda}(T))^n=T^n$. Therefore, from (ii), we obtain $\|Tx\|^2\le\|T^2x\|\|x\|$ for all $x\in\mathcal{H}$, which means that $T$ is paranormal. Then by Theorem \ref{main1}, we get (i).
	
	(i) $\Rightarrow$ (iii). From \eqref{normalclass}, $T$ is a class A operator, so $|T|^2\le|T^2|$, and hence (iii) follows immediately.
	\end{proof} 
	
The following example shows that if $T^n$ is quasinormal and satisfies (iii) (and hence, as can be seen from the proof of Corollary \ref{powerquasi}, also satisfies (ii)), but $T$ itself is not $n$-power quasinormal, then one cannot conclude that $T$ is quasinormal.		
	\begin{example}
		Let $n\in\{3,4,\cdots\}$. Let $R = \begin{bmatrix}
			S & I \\
			0 & \omega S
		\end{bmatrix}$ be the operator on $l^2(\mathbb N_0) \oplus l^2(\mathbb N_0)$, where $\omega = e^{2\pi i/n}$ and $S = \operatorname{shift}(1,1,1,\dots)$. A direct induction gives
		$
		R^n =
		\begin{bmatrix}
			S^n & S^{n-1} \sum_{j=0}^{n-1} \omega^j \\
			0 & \omega^n S^n
		\end{bmatrix}
		=
		\begin{bmatrix}
			S^n & 0 \\
			0 & S^n
		\end{bmatrix}$,
		since $\sum_{j=0}^{n-1} \omega^j = 0$ and $\omega^n = 1$. Moreover, since $S^*S = I$, we have $(R^n)^*R^n = I$, and hence $R^n$ is quasinormal. On the other hand,
		\[
		R^n R^* R - R^* R R^n =
		\begin{bmatrix}
			\times & S^n S^* - S^* S^n \\
			\times & \times
		\end{bmatrix}
		=
		\begin{bmatrix}
			\times & -S^{n-1} P_{\langle e_0\rangle} \\
			\times & \times
		\end{bmatrix}
		\neq 0,
		\]
		so $R$ is not $n$-power quasinormal, and hence not quasinormal.
		
		We claim that $d\triangleq\|(\Delta_{\lambda}(R))^n - \Delta_{\lambda}(R^n)\| > 0$. Indeed, if $d = 0$, then 
		\[
		(\Delta_{\lambda}(R))^n = \Delta_{\lambda}(R^n) = R^n,
		\]
		which would imply, by Theorem \ref{char}, that $R$ is $n$-power quasinormal, a contradiction.	Now we can choose $\alpha > 0$ such that
		\[
		\alpha^{n-2} d \ge \|R\|^2.
		\]
		
		Set $T = \alpha R$. Then $T^n$ is quasinormal, while $T$ is not $n$-power quasinormal (and hence not quasinormal). Moreover, by the homogeneity of the $\lambda$-Aluthge transform, we have
		\[
		\|(\Delta_{\lambda}(T))^n - \Delta_{\lambda}(T^n)\|
		= \|\alpha^n ((\Delta_{\lambda}(R))^n - \Delta_{\lambda}(R^n))\|
		= \alpha^n d,
		\]
		and consequently
		\[
		\begin{aligned}
			|T|^2
			&\le \||T|^2\| I
			= \|T\|^2 I
			= \alpha^2 \|R\|^2 I \\
			&\le \alpha^n dI
			= \|(\Delta_{\lambda}(T))^n - \Delta_{\lambda}(T^n)\|I \\
			&\le |T^2| + \|(\Delta_{\lambda}(T))^n - \Delta_{\lambda}(T^n)\|I,
		\end{aligned}
		\]
		that is, $T$ satisfies condition (iii).
\end{example}
In what follows, we construct an operator $S$ using the direct sum of operators, showing that $S$ is $n$-power quasinormal and satisfies condition (ii) of Corollary \ref{powerquasi}, while $S^n$ is not quasinormal and (iii) fails to hold. Consequently, $S$ is not quasinormal either.
		\begin{example}
			Let $n\in\{3,4,\cdots\}$. Take the operator
			\[
			T = \begin{bmatrix}
				W & 0 \\
				\sqrt{1-c^2} P_{\langle e_0 \rangle} & 0
			\end{bmatrix}
			\]
			from Example \ref{pnnotimplynq}. It is easy to verify that \(W^n\) has the polar decomposition \(W^n = U|W^n|\), where $|W^n| e_k =
			\begin{cases}
				c e_0 & k = 0, \\
				e_k & k \ge 1,
			\end{cases}$
		and
			$U e_k = e_{k+n} \quad (k \ge 0).$
			Thus
			\[
			\Delta_\lambda(W^n) e_k
			= |W^n|^\lambda U |W^n|^{1-\lambda} e_k
			=
			\begin{cases}
				c^{1-\lambda} e_n & k = 0, \\
				e_{n+k} & k \ge 1.
			\end{cases}
			\]
Moreover, by \eqref{poma}, the polar decomposition of \(T^n\) is $T^n =\begin{bmatrix}
				U & 0 \\
				0 & 0
			\end{bmatrix}
			\begin{bmatrix}
				|W^n| & 0 \\
				0 & 0
			\end{bmatrix}$,
so $$\Delta_\lambda(T^n) =
			\begin{bmatrix}
				\Delta_\lambda(W^n) & 0 \\
				0 & 0
			\end{bmatrix}.$$
		Example \ref{pnnotimplynq} gives $T$ $n$-power quasinormal, so Theorem \ref{char} yields $$(\Delta_\lambda(T))^n = T^n =
			\begin{bmatrix}
				W^n & 0 \\
				0 & 0
			\end{bmatrix}.$$
			Combining this with $
			(W^n - \Delta_\lambda(W^n)) e_k =
			\begin{cases}
				(c - c^{1-\lambda}) e_n & k = 0, \\
				0 & k \ge 1,
			\end{cases}$
			we obtain
			\begin{equation}\label{aluthgeeq2}
			\|(\Delta_\lambda(T))^n - \Delta_\lambda(T^n)\|
			= \|W^n - \Delta_\lambda(W^n)\|
			= c^{1-\lambda} - c.
			\end{equation}
			
	Now take any $x = \begin{pmatrix} x_1 \\ x_2 \end{pmatrix}
			\in l^2(\mathbb N_0) \oplus l^2(\mathbb N_0)$.
			From \eqref{aluthgema}, we have 
			\begin{equation}\label{aluthgeeq1}
			\|Tx\| = \||T|x\| = \|x_1\|.
			\end{equation}
			 By \eqref{poma} and \eqref{poma1},
			\[
			\begin{aligned}
				\|T^2x\|^2
				&= \|W^2 x_1\|^2\\&= \left\| W^2 \sum_{k=0}^\infty \langle x_1, e_k \rangle e_k \right\|^2 \\
				&= \left\| c \langle x_1, e_0 \rangle e_2 + \sum_{k=1}^\infty \langle x_1, e_k \rangle e_{k+2} \right\|^2 \\
				&= c^2 \|\langle x_1, e_0 \rangle e_2\|^2
				+ \sum_{k=1}^\infty \|\langle x_1, e_k \rangle e_{k+2}\|^2 \\
				&\ge c^2 \sum_{k=0}^\infty \|\langle x_1, e_k \rangle e_{k+2}\|^2
				= c^2 \|x_1\|^2.
			\end{aligned}
			\]
			Hence, from \eqref{aluthgeeq1} together with the above equation, we get $$\|Tx\|^2 - \|T^2x\|\,\|x\|
			\le \|x_1\|^2 - c\,\|x_1\|\,\|x\|
			\le (1-c)\|x\|^2.$$
			Let $\alpha = \left( \frac{1-c}{c^{1-\lambda} - c} \right)^{1/(n-2)}$. Replacing \(x\) by \(\alpha x\) in the above inequality gives
			\begin{equation}\label{alutheeq5}
				\|\alpha T x\|^2
				\le \|(\alpha T)^2 x\|\,\|x\|
				+ \alpha^2 (1-c)\|x\|^2.
			\end{equation}
			
			On the other hand, take the \(3 \times 3\) matrix $
			M = \begin{pmatrix}
				0 & 2 & -1 \\
				0 & 0 & 2 \\
				0 & 0 & 0
			\end{pmatrix}$.
		Clearly \(M^3 = 0\), so \(M\) is \(n\)-power quasinormal, and hence $(\Delta_\lambda(M))^n = M^n = 0 = \Delta_\lambda(M^n)$, so 
		\begin{equation}\label{aluthgeq3}
			\|(\Delta_\lambda(M))^n - \Delta_\lambda(M^n)\| = 0.
		\end{equation}
	 For any unit vector $y = \begin{pmatrix} y_1 \\ y_2 \\ y_3 \end{pmatrix} \in \mathbb C^3$, we have
			\[
			\begin{aligned}
				\|My\|^2 - \|M^2 y\|
				&= |2y_2 - y_3|^2 + 4|y_3|^2 - 4|y_3| \\
				&\le 4|y_2|^2 + 4|y_2||y_3| + 5|y_3|^2 - 4|y_3| \\
				&\le 4(1 - |y_3|^2) + 4|y_3|\sqrt{1 - |y_3|^2} + 5|y_3|^2 - 4|y_3| \\
				&= 4 + |y_3|^2 + 4|y_3|\sqrt{1 - |y_3|^2} - 4|y_3| \\
				&\le 4 + |y_3|^2 + 4|y_3|\left(1 - \frac{|y_3|^2}{2}\right) - 4|y_3| \\
				&= 4 + |y_3|^2 - 2|y_3|^3 \le \frac{109}{27}.
			\end{aligned}
			\]
			Thus, for every \(x \in \mathbb C^3\),
			\[
			\|Mx\|^2 \le \|M^2 x\|\,\|x\| + \frac{109}{27}\|x\|^2.
			\]
Let $\beta = \sqrt{\frac{27}{109}}\, \alpha \sqrt{1-c}$. Replacing \(x\) by \(\beta x\) in the above inequality gives
\begin{equation}\label{aluthgeeq6}
\|\beta M x\|^2
\le \|(\beta M)^2 x\|\,\|x\|
+ \frac{109}{27}\beta^2 \|x\|^2
= \|(\beta M)^2 x\|\,\|x\|
+ \alpha^2(1-c)\|x\|^2.
\end{equation}

Write \(\mathcal K^2 = l^2(\mathbb N_0) \oplus l^2(\mathbb N_0)\). Now consider the operator $$S = \alpha T \oplus \beta M$$ on \(\mathcal K^2 \oplus \mathbb C^3\). Since \(T\) and \(M\) are \(n\)-power quasinormal and \(T^n\) is not quasinormal, we have that \(S\) is \(n\)-power quasinormal and \(S^n\) is not quasinormal. Moreover,
			\[
			\begin{aligned}
				&\|(\Delta_\lambda(S))^n - \Delta_\lambda(S^n)\| \\
				&= \|(\Delta_\lambda(\alpha T \oplus \beta M))^n - \Delta_\lambda(\alpha^n T^n \oplus \beta^n M^n)\| \\
				&= \|(\Delta_\lambda(\alpha T))^n \oplus (\Delta_\lambda(\beta M))^n
				- \Delta_\lambda(\alpha^n T^n) \oplus \Delta_\lambda(\beta^n M^n)\| \\
				&= \max \bigl\{ \|(\Delta_\lambda(\alpha T))^n - \Delta_\lambda(\alpha^n T^n)\|,
				\|(\Delta_\lambda(\beta M))^n - \Delta_\lambda(\beta^n M^n)\| \bigr\} \\
				&= \max \bigl\{ \|\alpha^n ((\Delta_\lambda(T))^n - \Delta_\lambda(T^n))\|,
				\|\beta^n ((\Delta_\lambda(M))^n - \Delta_\lambda(M^n))\| \bigr\}.
			\end{aligned}
			\]
			Combining this with \eqref{aluthgeeq2} and \eqref{aluthgeq3}, we obtain
			\[
			\|(\Delta_\lambda(S))^n - \Delta_\lambda(S^n)\|
			= \alpha^n (c^{1-\lambda} - c)
			= \alpha^2 (1-c).
			\]
Now take any $x = \begin{pmatrix} u \\ v \end{pmatrix}
			\in \mathcal K^2 \oplus \mathbb C^3$. Then \eqref{alutheeq5} and \eqref{aluthgeeq6} yield
			\[
			\begin{aligned}
				\|Sx\|^2
				&= \|\alpha T u\|^2 + \|\beta M v\|^2 \\
				&\le \|(\alpha T)^2 u\|\,\|u\|
				+ \|(\beta M)^2 v\|\,\|v\|
				+ \alpha^2(1-c)(\|u\|^2 + \|v\|^2) \\
				&\le \sqrt{\|(\alpha T)^2 u\|^2 + \|(\beta M)^2 v\|^2}
				\sqrt{\|u\|^2 + \|v\|^2}
				+ \alpha^2(1-c)\|x\|^2 \\
				&= \|S^2 x\|\,\|x\|
				+ \alpha^2(1-c)\|x\|^2 \\
				&= \|S^2 x\|\,\|x\|
				+ \|(\Delta_\lambda(S))^n - \Delta_\lambda(S^n)\|\,\|x\|^2,
			\end{aligned}
			\]
			where the second inequality follows from the Cauchy–Schwarz inequality. Hence $S$ satisfies condition (ii). Finally, take $\widetilde v = \begin{pmatrix} 0 \\ v \end{pmatrix}
			\in \mathcal K^2 \oplus \mathbb C^3$, where $v = \frac{\sqrt{5}}{5}
			\begin{pmatrix} 0 \\ 2 \\ -1 \end{pmatrix}$. A direct computation gives
			\[
			\begin{aligned}
				&\langle |S|^2 \widetilde v, \widetilde v \rangle
				- \langle |S^2| \widetilde v, \widetilde v \rangle \\
				&= \langle |\beta M|^2 v, v \rangle
				- \langle |(\beta M)^2| v, v \rangle \\
				&= \beta^2 \langle Mv, Mv \rangle
				- \beta^2
				\left\langle
				\begin{pmatrix} 0 & 0 & 0 \\ 0 & 0 & 0 \\ 0 & 0 & 4 \end{pmatrix}
				v, v
				\right\rangle \\
				&= 5\beta^2
				= \frac{135}{109} \alpha^2(1-c)
				> \alpha^2(1-c)
				= \|(\Delta_\lambda(S))^n - \Delta_\lambda(S^n)\|\langle\widetilde{v},\widetilde{v}\rangle.
			\end{aligned}
			\]
			Hence \(S\) does not satisfy condition (iii).
		\end{example}
		\begin{remark}
			The operator \(S\) in the example is not paranormal either; hence the class of operators satisfying condition (ii) is larger than the class of paranormal operators. Indeed, using the above notation, a direct computation gives
			\[
			\|\beta M v\|^2 = \frac{29}{5}\beta^2
			> \frac{4\sqrt{5}}{5}\beta^2
			= \|(\beta M)^2 v\|\,\|v\|,
			\]
			and therefore $\|S\widetilde v\|^2
			= \|\beta M v\|^2
			> \|(\beta M)^2 v\|\,\|v\|
			= \|S^2 \widetilde v\|\,\|\widetilde v\|$, so \(S\) is not paranormal.
		\end{remark}
		
		From Theorem \ref{main1}, it follows that if condition (ii) in Corollary \ref{powerquasi} is strengthened to $\|Tx\|^2\le\|T^2x\|\|x\|$ for all $x\in\mathcal{H}$, i.e., $T$ is paranormal, then the quasinormality of $T$ can be inferred even without assuming that $T^n$ is quasinormal. A parallel question is whether, under the strengthened condition that $T$ is paranormal, the hypothesis that $T$ is $n$-power quasinormal can be omitted, while retaining the assumption that $T^n$is quasinormal. This is exactly the question posed by Stanković and Kubrusly (Question \ref{question1}). We give a positive answer to this question at the end of the paper. The following key lemma is first established.
		
	\begin{lemma}\label{paralem}
		Let $n\in\mathbb N$ and let $T\in\mathbb B(\mathcal H)$ be paranormal. If $T^n$ is quasinormal, then $T|T^n|=|T^n|T$.
	\end{lemma}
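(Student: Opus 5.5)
The plan is to show that every spectral subspace of $|T^n|$ reduces $T$. Set $S=T^n$ and let $F$ be the spectral measure of $|S|=|T^n|$. Since $S$ is quasinormal, $S$ commutes with $|S|$ and hence with every $F(\Delta)$. By \cite[Theorem 2.2 (ii)]{stankovic-afa-2025} we also have $(S^\ast)^mS^m=|S|^{2m}$, so
\[
\|S^m y\|=\||S|^m y\|\qquad (y\in\mathcal H,\ m\in\mathbb N_0).
\]
It suffices to prove $F(\Delta)T=TF(\Delta)$ for the intervals $\Delta=[0,b]$, $b\ge0$. This gives commutation with the whole spectral measure, and therefore $T|S|=|S|T$. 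We use throughout that $T$ commutes with $S$.

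\emph{Step 1: $F([0,b])\mathcal H$ is invariant under $T$.} Take $x\in F([0,b])\mathcal H$. Then
\[
\||S|^mTx\|=\|S^mTx\|=\|TS^mx\|\le\|T\|\,\|S^mx\|\le\|T\|\,b^m\|x\|.
\]
Suppose $Tx$ had a nonzero component in $F([b',\infty))\mathcal H$ for some $b'>b$. Then $\||S|^mTx\|\ge b'^m\|F([b',\infty))Tx\|$, which eventually exceeds $\|T\|b^m\|x\|$. Hence $Tx\in F([0,b])\mathcal H$. In block form with respect to $\mathcal H=M\oplus M^\perp$, where $M=F([0,b])\mathcal H$, this gives
\[
T=\begin{bmatrix}A&B\\0&C\end{bmatrix},\qquad S=S_1\oplus S_2,
\]
with $S_1,S_2$ quasinormal, $\|S_1\|\le b$, and $|S_2|$ supported on $(b,\infty)$. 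The relation $TS=ST$ gives $BS_2=S_1B$, and iterating, $BS_2^m=S_1^mB$. The goal is $B=0$.

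\emph{Step 2: kill $B$ using paranormality.} Fix $b'>b$ and $y\in F([b',\infty))\mathcal H\subseteq M^\perp$. The lower bound comes from Lemma \ref{parapro}(iii) applied with $nm$ in place of $n$:
\[
\|Ty\|\,\|S^my\|\le\|TS^my\|\,\|y\|=\|S^mTy\|\,\|y\|.
\]
Write $Ty=By+Cy$. Then $S^mTy=S_1^mBy\oplus S_2^mCy$. The first summand has norm at most $b^m\|By\|$, and $\|S^my\|\ge b'^m\|y\|$. Dividing by $b'^m$ and letting $m\to\infty$, the $B$-part becomes negligible. The aim is to extract an inequality of the form $\|By\|^2+\|Cy\|^2=\|Ty\|^2\le\|Cy\|^2$, which forces $By=0$. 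Here one would compare with $\|S_2^mCy\|$ via the same paranormal monotonicity, using \eqref{para3} applied to vectors $S^my$ and the chain of ratios \eqref{paraineq1}. Since the union of the subspaces $F([b',\infty))\mathcal H$ over $b'>b$ is dense in $M^\perp$, this yields $B=0$. Then $M$ reduces $T$, which is exactly $F([0,b])T=TF([0,b])$.

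\emph{Main obstacle.} The hard step is Step 2. The naive lower bound controls $\|Ty\|$ only through the asymptotic growth rate, not the size of the $M$-component of $Ty$. My first attempt will be to apply the paranormal ratio monotonicity \eqref{paraineq1} along the orbit $y, Ty, \dots, T^{nm+1}y$. This gives $\|Ty\|/\|y\|\le(\|S^mTy\|/\|S^m y\|)$-type bounds. I would then use that $\|S_2^mCy\|\le\|C\|\,\|S_2^my\|$, obtained from the commutation $CS_2=S_2C$, which follows from $TS=ST$ in the $(2,2)$ entry. If this direct comparison does not close, I would instead use the polar form $S_2=U_2|S_2|$, with $U_2$ an isometry commuting with $|S_2|$, to transfer $BS_2^m=S_1^mB$ to the bound $\|B|S_2|^m\|\le b^m\|B\|$. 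This would force $B F([b',\infty))=0$ directly.
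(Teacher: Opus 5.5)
Step~1 is correct. It is essentially the argument the paper uses for the bottom band $E([0,h])\mathcal H$, via $A^m\widetilde T=\widetilde T B^m$. But it uses only $TS=ST$ and the quasinormality of $S$, never paranormality. So the whole content of the lemma is in Step~2, and Step~2 is not proved.

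\textbf{First route.} Your first route gives only
\[
\frac{\|Ty\|}{\|y\|}\le\liminf_{m\to\infty}\frac{\|S_2^mCy\|}{\|S_2^my\|}\le\|C\|,
\]
which says nothing about $By$. Sharpening it does not help. Apply Step~1 at every level, so that for $y\in F([c,c+\delta])\mathcal H$ with $c>b$ you know $Cy\in F((b,c+\delta])\mathcal H$. Then Lemma~\ref{parapro}(iii) with $nm$ in place of $n$ gives
\[
\Bigl(1-\bigl(\tfrac{b}{c}\bigr)^{2m}\Bigr)\|By\|^2\le\Bigl(\bigl(\tfrac{c+\delta}{c}\bigr)^{2m}-1\Bigr)\|Cy\|^2 .
\]
The right-hand factor blows up in exactly the limit $m\to\infty$ that you use to kill the $B$-term. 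At fixed $m$ you get only $\|By\|=O(\sqrt{\delta})\,\|y\|$ on each band of width $\delta$. Summing over the $O(1/\delta)$ bands then gives no decay.

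\textbf{Fallback route.} The polar-form route is wrong, not just incomplete. It never uses paranormality, and its conclusion fails without it. Take Example~\ref{pqsetempty} with $n\ge 2$. There $T^n=W^n\oplus 0$ is quasinormal and $|T^n|=I\oplus 0$. For $b\in[0,1)$ you get $M=0\oplus\mathcal K$, with blocks $A=0$, $B=P_{\langle e_0\rangle}$, $C=W$, $S_1=0$, $S_2=U_2=W^n$ and $|S_2|=I$. Then $BS_2^m=P_{\langle e_0\rangle}W^{nm}=0=S_1^mB$ for $m\ge 1$, yet $B|S_2|^m=P_{\langle e_0\rangle}\neq 0$. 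The transfer breaks because $U_2^m$ sits between $B$ and $|S_2|^m$. The bound $\|BU_2^m|S_2|^m\|\le b^m\|B\|$ controls $B$ only on $\operatorname{ran}U_2^m$, which is a proper subspace when $U_2$ is a non-unitary isometry.

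\textbf{Missing idea.} What you need is a \emph{second-order} use of paranormality at fixed small powers. The paper takes $m=0,1$ in \eqref{paralem2}--\eqref{paralem3} on thin bands $(kh,(k+1)h]$ with $k\ge 1$. This yields the ratio chain \eqref{paralem4} and a variance bound, and hence \eqref{paralem7}:
\[
\|(|T^n|-c_x)Tx\|\le\tfrac{3}{\sqrt2}h\|Tx\|,
\]
with $c_x$ in the same band as $x$. This is $O(h)$ per band rather than $O(\sqrt h)$, and that is what survives the Cauchy--Schwarz summation over $N$ bands in \eqref{conkeyine3}.

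With that estimate your reducing-subspace plan could also be finished. It gives $\|By\|\le\frac{3h}{\sqrt2\,(b'-b)}\|T\|\,\|y\|$ on bands above $b'>b$, and these bounds do sum to zero as $h\to 0$. Without it, Step~2 only states the goal; it does not prove it.
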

	\begin{proof}
		The cases $n=1$ or $\|T^n\|=0$ are trivial. Assume now that $n\ge2$ and $\|T^n\|>0$. Our aim is to derive estimates for \eqref{conkeyine2} and \eqref{conparaine2}, and consequently to estimate \eqref{conkeyine3}, which will complete the proof. 
		
		Since $T^n$ is quasinormal, it follows from \cite[Theorem 2.2 (ii)]{stankovic-afa-2025} that
		\[
		(T^*)^{nm}T^{nm} = ((T^*)^n T^n)^m = |T^n|^{2m}
		\]
		for every $m\in\mathbb N_{0}$, and hence for every $x\in\mathcal H$,
		\begin{equation}\label{conkeyineq}
			\|T^{nm}x\| = \|(T^n)^m x\| = \||T^n|^m x\|.
		\end{equation}
		Moreover, by Lemma \ref{parapro} (iii), for every $m\in\mathbb N_{0}$,
		\[
		\|T^{nm+1}x\|\,\|T^{nm+n}x\|
		\le \|T^{nm+n+1}x\|\,\|T^{nm}x\|.
		\]
		Using \eqref{conkeyineq}, we obtain
		\begin{equation}\label{paralem8}
			\||T^n|^m T x\|\,\||T^n|^{m+1} x\|
			\le \||T^n|^{m+1} T x\|\,\||T^n|^m x\|.
		\end{equation}
		
		Let $N$ be a positive integer and set $h=\frac{\|T^n\|}{N}$. Let $E$ be the spectral measure of $|T^n|$. Since $T^n$ is quasinormal, i.e., $T^n|T^n|=|T^n|T^n$, we have $T^n E(\Delta)=E(\Delta)T^n$ for every Borel set $\Delta\subseteq[0,\|T^n\|]$; that is, $E(\Delta)$ is invariant under $T^n$. Take a nonzero vector $x\in E((kh,(k+1)h])\mathcal H$ with $k\in\{1,2,\dots,N-1\}$. By \eqref{conkeyineq},
		\[
		\|T^{nm+n}x\| = \||T^n|^{m+1}x\|
		= \bigl\||T^n|^{m+1}\big|_{E((kh,(k+1)h])\mathcal H} x\bigr\|
		\ge (kh)^{m+1}\|x\| \ne 0.
		\]
		Therefore, from \eqref{conkeyineq} and the above equation, we have $\||T^n|^m T^j x\| = \|T^{nm+j}x\|\ne 0$ for every $j\in\{0,1,\dots,n\}$. Repeatedly applying \eqref{paralem8} gives
		\begin{equation}\label{paralem2}
			\begin{aligned}
				\frac{\||T^n|^{m+1}x\|}{\||T^n|^m x\|}
				&\le \frac{\||T^n|^{m+1}Tx\|}{\||T^n|^m T x\|}
				\le \frac{\||T^n|^{m+1}T^2x\|}{\||T^n|^m T^2 x\|}
				\le \cdots \\
				&\le \frac{\||T^n|^{m+1}T^n x\|}{\||T^n|^m T^n x\|}
				= \frac{\|T^n |T^n|^{m+1}x\|}{\|T^n |T^n|^m x\|}
				= \frac{\||T^n|^{m+2}x\|}{\||T^n|^{m+1}x\|}.
			\end{aligned}
		\end{equation}
	Furthermore,
		\begin{equation}\label{paralem3}
			\begin{aligned}
				\frac{\||T^n|^{m+2}x\|^2}{\||T^n|^{m+1}x\|^2}
				- \frac{\||T^n|^{m+1}x\|^2}{\||T^n|^m x\|^2}
				&= \frac{\langle |T^n|^4 y_m, y_m\rangle}{\langle |T^n|^2 y_m, y_m\rangle}
				- \langle |T^n|^2 y_m, y_m\rangle \\
				&\le \frac{(1+2k)^2 h^4}{4\langle |T^n|^2 y_m, y_m\rangle}
				\le \frac{(1+2k)^2 h^4}{4k^2 h^2}
				\le\frac{9}{4}h^2,
			\end{aligned}
		\end{equation}
		where $y_m = \frac{|T^n|^m x}{\||T^n|^m x\|}$, and the first inequality follows from inequality (1.3) in \cite{zuo-jmi-2013}. Taking $m=0,1$ in \eqref{paralem2} and \eqref{paralem3}, together with the fact that $x\in E((kh,(k+1)h])\mathcal H$ is nonzero, we get
		\begin{equation}\label{paralem4}
			kh \le \frac{\||T^n|x\|}{\|x\|}
			\le \frac{\||T^n|Tx\|}{\|Tx\|}
			\le \frac{\||T^n|^2x\|}{\||T^n|x\|}
			\le \frac{\||T^n|^2Tx\|}{\||T^n|Tx\|}
			\le \frac{\||T^n|^3x\|}{\||T^n|^2x\|}
			\le (k+1)h,
		\end{equation}
		and
		\begin{equation}\label{paralem5}
			\begin{aligned}
				\frac{\||T^n|^3x\|^2}{\||T^n|^2x\|^2}
				- \frac{\||T^n|x\|^2}{\|x\|^2}
				&=
				\left( \frac{\||T^n|^3x\|^2}{\||T^n|^2x\|^2}
				- \frac{\||T^n|^2x\|^2}{\||T^n|x\|^2} \right)
				+
				\left( \frac{\||T^n|^2x\|^2}{\||T^n|x\|^2}
				- \frac{\||T^n|x\|^2}{\|x\|^2} \right) \\
				&\le \frac{9}{4}h^2 + \frac{9}{4}h^2
				= \frac{9}{2}h^2.
			\end{aligned}
		\end{equation}
From \eqref{paralem4} and \eqref{paralem5}, we have
$$\frac{\||T^n|^2Tx\|^2}{\||T^n|Tx\|^2}
- \frac{\||T^n|Tx\|^2}{\|Tx\|^2}
\le \frac{9}{2}h^2.$$
Write $\widetilde{x}= \left( |T^n| - \frac{\||T^n|Tx\|}{\|Tx\|} \right) Tx$. Then a direct computation gives
\begin{equation}\label{keyineq6}
\begin{aligned}
	\Big\|\Big(|T^n| + \frac{\||T^n|Tx\|}{\|Tx\|} I\Big)\widetilde{x}\Big\|^2&=\left\| \left( |T^n|^2 - \frac{\||T^n|Tx\|^2}{\|Tx\|^2} \right) Tx \right\|^2 \\
	&= \||T^n|^2 Tx\|^2
	- 2\frac{\||T^n|Tx\|^2}{\|Tx\|^2}\||T^n|Tx\|^2
	+ \frac{\||T^n|Tx\|^4}{\|Tx\|^4}\|Tx\|^2 \\
	&= \frac{\||T^n|Tx\|^2}{\|Tx\|^2}
	\left( \frac{\||T^n|^2Tx\|^2}{\||T^n|Tx\|^2}
	- \frac{\||T^n|Tx\|^2}{\|Tx\|^2} \right) \|Tx\|^2\\&\le\frac{\||T^n|Tx\|^2}{\|Tx\|^2}\cdot\frac{9}{2}h^2\|Tx\|^2.
\end{aligned}
\end{equation}
On the other hand, $\frac{\||T^n|Tx\|}{\|Tx\|} I
\le |T^n| + \frac{\||T^n|Tx\|}{\|Tx\|} I$
gives $\frac{\||T^n|Tx\|^2}{\|Tx\|^2} I
\le \left( |T^n| + \frac{\||T^n|Tx\|}{\|Tx\|} I \right)^2$, so 
$$	\Big\|\frac{\||T^n|Tx\|}{\|Tx\|}\widetilde{x}\Big\|\le\Big\|\Big(|T^n| + \frac{\||T^n|Tx\|}{\|Tx\|} I\Big)\widetilde{x}\Big\|.$$
Combining the above with \eqref{keyineq6}, we get
$$\Big\|\frac{\||T^n|Tx\|}{\|Tx\|}\widetilde{x}\Big\|\le\frac{\||T^n|Tx\|}{\|Tx\|} \cdot \frac{3}{\sqrt{2}} h \|Tx\|,$$
and hence
		\begin{equation}\label{paralem7}
			\left\| \left( |T^n| - \frac{\||T^n|Tx\|}{\|Tx\|} \right) Tx \right\|
			\le \frac{3}{\sqrt{2}} h \|Tx\|.
		\end{equation}
Moreover, from \eqref{paralem4}, we have
		\[
		\left\| \left( |T^n| - \frac{\||T^n|Tx\|}{\|Tx\|} \right) \Big|_{E((kh,(k+1)h])\mathcal H} \right\|
		\le\max\left\{ \frac{\||T^n|Tx\|}{\|Tx\|} - kh,\ (k+1)h - \frac{\||T^n|Tx\|}{\|Tx\|} \right\}
		\le h.
		\]
		Hence, by \eqref{paralem7}, for every nonzero $x\in E((kh,(k+1)h])\mathcal H$ with $k\in\{1,2,\dots,N-1\}$, we have
		\begin{equation}\label{conkeyine2}
			\begin{aligned}
				\|(|T^n|T - T|T^n|)x\|
				&\le
				\left\| \left( |T^n| - \frac{\||T^n|Tx\|}{\|Tx\|} \right) Tx \right\|
				+
				\left\| \frac{\||T^n|Tx\|}{\|Tx\|} Tx - T|T^n|x \right\| \\
				&\le
				\frac{3}{\sqrt{2}} h \|T\| \|x\|
				+
				\|T\|
				\left\|
				\left( |T^n| - \frac{\||T^n|Tx\|}{\|Tx\|} \right)
				\Big|_{E((kh,(k+1)h])\mathcal H}
				x
				\right\| \\
				&\le
				\frac{3}{\sqrt{2}} h \|T\| \|x\|
				+
				\|T\|
				\left\|
				\left( |T^n| - \frac{\||T^n|Tx\|}{\|Tx\|} \right)
				\Big|_{E((kh,(k+1)h])\mathcal H}
				\right\|
				\|x\| \\
				&\le
				\frac{3}{\sqrt{2}} h \|T\| \|x\|
				+ h \|T\| \|x\|.
			\end{aligned}
		\end{equation}
		
		Let $\varepsilon>0$. Define	$\widetilde T = E([h+\varepsilon,\|T^n\|]) T E([0,h])$, $A = T^n\big|_{E([h+\varepsilon,\|T^n\|])\mathcal H}$ and $B = T^n\big|_{E([0,h])\mathcal H}$. Then $A\widetilde T = E([h+\varepsilon,\|T^n\|]) T^{n+1} E([0,h]) = \widetilde T B$. Hence, for every $m\in\mathbb N$,
		\begin{equation}\label{lemine1}
			A^m \widetilde T = \widetilde T B^m.
		\end{equation}
Take $x\in E([0,h])\mathcal H$. By \eqref{conkeyineq} and \eqref{lemine1},
		\[
		\begin{aligned}
			(h+\varepsilon)^m \|\widetilde T x\|
			&\le \||T^n|^m \widetilde T x\|
			= \|T^{nm} \widetilde T x\|
			= \|A^m \widetilde T x\|
			= \|\widetilde T B^m x\| \\
			&= \|\widetilde T T^{nm} x\|
			\le \|\widetilde T\| \, \||T^n|^m x\|
			\le h^m \|\widetilde T\| \, \|x\|.
		\end{aligned}
		\]
		Thus $\|\widetilde T x\| \le \frac{h^m}{(h+\varepsilon)^m} \|\widetilde T\| \|x\|$. Letting $m\to\infty$, we get $\widetilde T x = E([h+\varepsilon,\|T^n\|]) T E([0,h]) x = 0$. Since $E([h+\varepsilon,\|T^n\|])$ converges to $E((h,\|T^n\|])$ in the strong operator topology as $\varepsilon\to0$, we have
		\[
		E((h,\|T^n\|]) T E([0,h]) x
		= \lim_{\varepsilon\to0} E([h+\varepsilon,\|T^n\|]) T E([0,h]) x
		= 0.
		\]
		Therefore, $E([0,h])\mathcal H$ is invariant under $T$. Consequently, for every $x\in E([0,h])\mathcal H$,
		\begin{equation}\label{conparaine2}
			\begin{aligned}
				\|(|T^n|T - T|T^n|)x\|
				&\le
				\left\| \left( |T^n| - \frac{h}{2} \right) Tx \right\|
				+
				\left\| \frac{h}{2} Tx - T|T^n|x \right\| \\
				&\le
				\left\| \left( |T^n| - \frac{h}{2} \right) \Big|_{E([0,h])\mathcal H} \right\| \|Tx\|
				+
				\|T\| \left\| \left( \frac{h}{2} - |T^n| \right) \Big|_{E([0,h])\mathcal H} \right\| \|x\| \\
				&\le
				2 \cdot \frac{h}{2} \|T\| \|x\|
				= h \|T\| \|x\|.
			\end{aligned}
		\end{equation}
		
		Finally, take any $x\in\mathcal H$. Then $x = E([0,h])x + \sum_{k=1}^{N-1} E((kh,(k+1)h])x$. Since $h=\frac{\|T^n\|}{N}$, by \eqref{conkeyine2} and \eqref{conparaine2} we obtain
		\begin{equation}\label{conkeyine3}
				\begin{aligned}
				\|(|T^n|T - T|T^n|)x\|
				&\le
				\|(|T^n|T - T|T^n|)E([0,h])x\|
				+
				\sum_{k=1}^{N-1}
				\|(|T^n|T - T|T^n|)E((kh,(k+1)h])x\| \\
				&\le
				\left( \frac{3}{\sqrt{2}} + 1 \right) h \|T\|
				\left( \|E([0,h])x\| + \sum_{k=1}^{N-1} \|E((kh,(k+1)h])x\| \right) \\
				&\le
				\left( \frac{3}{\sqrt{2}} + 1 \right) h \|T\| \sqrt{N}
				\sqrt{
					\|E([0,h])x\|^2 + \sum_{k=1}^{N-1} \|E((kh,(k+1)h])x\|^2
				} \\
				&=
				\left( \frac{3}{\sqrt{2}} + 1 \right) h \|T\| \sqrt{N} \|x\| \\
				&=
				\frac{1}{\sqrt{N}}
				\left( \frac{3}{\sqrt{2}} + 1 \right) \|T^n\| \|T\| \|x\|,
			\end{aligned}
		\end{equation}
		where the third inequality follows from the Cauchy–Schwarz inequality. Letting $N\to\infty$, we obtain $T|T^n| = |T^n|T$. The proof is complete.
	\end{proof}
	
	We now answer Question \ref{question1}.
	
	\begin{theorem}
	If $T\in\mathbb{B}(\mathcal{H})$ is paranormal and $T^n$ is quasinormal for some $n\in\mathbb{N}$, then $T$ is quasinormal.
	\end{theorem}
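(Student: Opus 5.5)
The strategy is to reduce the statement to Theorem \ref{main1}: it suffices to show that $T$ is $n$-power quasinormal, i.e. $[T^n,T^\ast T]=0$. Then paranormality together with Theorem \ref{main1} gives that $T$ is quasinormal. The cases $n=1$ and $T^n=0$ need separate comment. For $n=1$ there is nothing to prove. If $T^n=0$, then $T$ is nilpotent and paranormal. Lemma \ref{parapro}(i) gives $\|Tx\|^n\le\|T^nx\|\|x\|^{n-1}=0$, so $T=0$, which is quasinormal. So assume $n\ge2$ and $T^n\neq0$.

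The key input is Lemma \ref{paralem}, which gives $T|T^n|=|T^n|T$. From this I will derive that $|T|$ commutes with $T^n$ in three steps.

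First, $T|T^n|=|T^n|T$ implies that $T$ commutes with $|T^n|^2=(T^\ast)^nT^n$. Taking adjoints, $T^\ast$ also commutes with $|T^n|$, and hence $T^\ast T$ commutes with $|T^n|$.

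Second, I want to pass from $|T^n|$ to $T^n$ itself. Let $T^n=V|T^n|$ be the polar decomposition. Since $T^n$ is quasinormal, $V$ commutes with $|T^n|$. The goal is $T^\ast T\,V=V\,T^\ast T$ on $\overline{\operatorname{ran}|T^n|}$. Here I would use the norm identity from the proof of Theorem \ref{main1}. By \cite[Theorem 2.2 (ii)]{stankovic-afa-2025} we have $\|T^{nm}y\|=\||T^n|^my\|$. Since $T$ commutes with $|T^n|$, this gives $\|T\,T^{nm}x\|=\|T^{nm}Tx\|=\||T^n|^mTx\|=\|T|T^n|^m x\|$. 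Equivalently, $\langle T^\ast T\,T^{nm}x,T^{nm}x\rangle=\langle T^\ast T|T^n|^mx,|T^n|^mx\rangle$.

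Third, write $T^{nm}=V^m|T^n|^m$. Here $V^m$ is an isometry on $\overline{\operatorname{ran}|T^n|}$ commuting with $|T^n|$, and $T^\ast T$ commutes with $|T^n|$. So the identity becomes $V^{\ast m}T^\ast T V^m=T^\ast T$ on the closure of $\operatorname{ran}|T^n|^m$, which is $\overline{\operatorname{ran}|T^n|}$ after compressing. This uses polarization, which is legitimate because both sides are self-adjoint forms. With $m=1$ we get $V^\ast(T^\ast T)V=P(T^\ast T)P$, where $P$ is the projection onto $\overline{\operatorname{ran}T^n}$.

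To upgrade this to the commutation $T^\ast T V=VT^\ast T$, I expect the main obstacle. The first thing I would try is to show that $\overline{\operatorname{ran}T^n}$ is invariant under $T^\ast T$. Invariance under $T$ is clear. For $T^\ast$, I would use that $T$ commutes with $|T^n|^2$: decomposing with respect to the spectral measure of $|T^n|$, $T^\ast$ preserves each spectral subspace $E(\Delta)\mathcal H$, and in particular $\ker|T^n|=\ker T^n$. Hence $T^\ast$ leaves $(\ker T^n)^\perp=\overline{\operatorname{ran}|T^n|}$ invariant. Since $T^n$ is quasinormal, $\overline{\operatorname{ran}T^n}\supseteq\overline{\operatorname{ran}T^n|T^n|}=\overline{\operatorname{ran}|T^n|T^n}$. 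For quasinormal $S$, $\ker S\subseteq\ker S^\ast$ fails in general, so instead I would only use $V$ as an isometry from $(\ker T^n)^\perp$ into itself.

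Then, on $\mathcal M=\overline{\operatorname{ran}|T^n|}$, the relation $V^\ast AV=A$ holds with $A=T^\ast T|_{\mathcal M}\ge0$. I also need $\|A^{1/2}V\xi\|=\|A^{1/2}\xi\|$ together with $V$ an isometry. Equality in the inequality $\|AV\xi\|\ge\|V^\ast AV\xi\|$ would give $AV=VA$ via the standard argument: $\|AV\xi-VA\xi\|^2=\|AV\xi\|^2-\|A\xi\|^2$. To get $\|AV\xi\|=\|A\xi\|$, I would apply the norm identity with $T^\ast T$ replaced by $(T^\ast T)^2$. This uses $\|T^\ast T\,T^{n}x\|$ compared with $\|T^\ast T|T^n|x\|$, and commutation of $T^\ast$ with $|T^n|$ should make these equal.

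With $AV=VA$ on $\mathcal M$, and $T^n=0$ on $\mathcal M^\perp=\ker T^n$ (which is invariant under $T^\ast T$), we get $T^nT^\ast T=T^\ast TT^n$. Finally, Theorem \ref{main1} gives that $T$ is quasinormal.
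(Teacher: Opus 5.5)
\textbf{Gap in the key step.} Reducing to Theorem \ref{main1} is legitimate, and your Steps 1--3 are sound. Lemma \ref{paralem} does make $\mathcal M=\overline{\operatorname{ran}|T^n|}$ reduce $T$, $W=V|_{\mathcal M}$ is an isometry of $\mathcal M$, and $W^\ast AW=A$ for $A=T^\ast T|_{\mathcal M}$. As you say yourself, $W^\ast AW=A$ does not force $AW=WA$. So the whole proof rests on the second-moment identity $\|T^\ast T\,T^nx\|=\|T^\ast T|T^n|x\|$, and this is where it breaks. Write $T^\ast T\,T^nx=T^\ast T^n(Tx)$. To use $\|T^nz\|=\||T^n|z\|$ you would have to move $T^\ast$ past $T^n=V|T^n|$. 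That requires $T^\ast$ to commute with $V$, not just with $|T^n|$, which is essentially the conclusion. In the first-order identity you could move $T$ past $T^{nm}$ for free; you cannot move $T^\ast T$ past $T^n$.

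\textbf{This step cannot be completed from your ingredients.} Take the operator $R=\begin{bmatrix} S&I\\0&\omega S\end{bmatrix}$ from the paper's example, with $n\ge3$. Then $R^n=S^n\oplus S^n$ is an isometry, so $|R^n|=I$. Hence $R$ and $R^\ast$ trivially commute with $|R^n|$, $V=R^n$, and $V^\ast(R^\ast R)V=R^\ast R$, because $R$ commutes with $V$. Yet $[R^n,R^\ast R]\neq0$. So everything in your Steps 1--3 holds for $R$, while $\|AV\xi\|=\|A\xi\|$ must fail for some $\xi$. Paranormality therefore has to be used a second time, in a substantive way, before you can conclude $n$-power quasinormality, and your proposal does not say how.

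\textbf{How the paper does it.} The spectral slabs $E((kh,(k+1)h])\mathcal H$ of $|T^n|$ reduce $T$ by Lemma \ref{paralem}. On each slab, Lemma \ref{parapro}(i) squeezes $T^\ast T$ between $k^2(k+1)^{(2-2n)/n}h^{2/n}$ and $((k+1)h)^{2/n}$. Letting $h\to0$ gives $T^\ast T=|T^n|^{2/n}$, that is, $|T|^n=|T^n|$. With that identity your route would close: $T^\ast T$ is then a function of $|T^n|$, hence commutes with $T^n$. But at that point $T$ already commutes with $|T|^n$, so quasinormality follows directly and Theorem \ref{main1} is not needed.

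\textbf{Two minor errors.} Quasinormal operators are hyponormal, so $\ker S\subseteq\ker S^\ast$ always holds for quasinormal $S$; it is the reverse inclusion that fails in general. Also, the projection in $V^\ast T^\ast TV=PT^\ast TP$ is onto $\overline{\operatorname{ran}|T^n|}$, not onto $\overline{\operatorname{ran}T^n}$.
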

	\begin{proof}	
		The case $n=1$ is trivial. If $\|T^n\|=0$, then by Lemma \ref{parapro} (i), we have $Tx=0$ for every $x\in\mathcal H$, so $T=0$, and the result follows. Assume now that $n\ge2$ and $\|T^n\|>0$. Take a positive integer $N$. Let $h = \frac{\|T^n\|}{N}$ and let $E$ be the spectral measure of $|T^n|$. For any $x\in E((kh,(k+1)h])\mathcal H$, $k=1,2,\dots,N-1$, Lemma \ref{parapro} (i) gives
		\begin{equation}\label{keypara1}
			\|Tx\|^n \le \|T^nx\|\,\|x\|^{n-1}
			= \||T^n|x\|\,\|x\|^{n-1}
			\le (k+1)h\,\|x\|^n.
		\end{equation}
		
		By Lemma \ref{paralem}, we have $T^*|T^n| = |T^n|T^*$, and hence $E([0,h])\mathcal H$ and $E((kh,(k+1)h])\mathcal H$ ($k=1,2,\dots,N-1$) are reducing subspaces for $T$. From \eqref{keypara1}, we obtain
		\[
		\|T\Big|_{E((kh,(k+1)h])\mathcal H}\| \le ((k+1)h)^\frac{1}{n}.
		\]
		Thus, for $x\in E((kh,(k+1)h])$,
		$$\begin{aligned}
			\|T^nx\|
			&= \|(T\Big|_{E((kh,(k+1)h])\mathcal H})^n x\|
			\\&\le \|T\Big|_{E((kh,(k+1)h])\mathcal H}\|^{n-1}
			\|T\Big|_{E((kh,(k+1)h])\mathcal H} x\|
			\le ((k+1)h)^\frac{n-1}{n} \|Tx\|.
		\end{aligned}$$
Also, since $kh\|x\| \le \||T^n|x\| \le \|T^nx\|$, we have $kh\|x\| \le ((k+1)h)^{(n-1)/n} \|Tx\|$, that is,
		\[
		\|Tx\| \ge k(k+1)^{(1-n)/n} h^{1/n} \|x\|.
		\]	
Hence, from the above and \eqref{keypara1}, we obtain
		\[
		k^2(k+1)^{(2-2n)/n} h^{2/n}
		\le T^*T\Big|_{E((kh,(k+1)h])\mathcal H}
		\le (k+1)^{2/n} h^{2/n}.
		\]
		Moreover,
		\[
		k^2(k+1)^{(2-2n)/n} h^{2/n}
		\le (kh)^{2/n}
		\le |T^n|^{2/n}\Big|_{E((kh,(k+1)h])\mathcal H}
		\le ((k+1)h)^{2/n}.
		\]
		Therefore, for $k\ge1$ and $n\ge2$,
		\[
		\begin{aligned}
			-2h^{2/n}
			&< k^2(k+1)^{(2-2n)/n} h^{2/n} - ((k+1)h)^{2/n} \\
			&\le (T^*T - |T^n|^{2/n})\Big|_{E((kh,(k+1)h])\mathcal H} \\
			&\le ((k+1)h)^{2/n} - k^2 h^{2/n} (k+1)^{(2-2n)/n} \\
			&= (2k+1)(k+1)^{2/n-2} h^{2/n}
			< 2h^{2/n}.
		\end{aligned}
		\]
		Thus, for every $k\in\{1,2,\dots,N-1\}$,
		\begin{equation}\label{queq1}
			\|(T^*T - |T^n|^{2/n}) E((kh,(k+1)h])\| < 2h^{2/n}.
		\end{equation}
		
		On the other hand, by Lemma \ref{parapro} (i), for $x\in E([0,h])\mathcal H$,
		\[
		\|Tx\|^n \le \|T^nx\|\,\|x\|^{n-1}
		= \||T^n|x\|\,\|x\|^{n-1}
		\le h\|x\|^n.
		\]
		Hence $0 \le T^*T\Big|_{E([0,h])\mathcal H} \le h^{2/n}I$, and since $0 \le |T^n|^{2/n}\Big|_{E([0,h])\mathcal H} \le h^{2/n}I$, we have $$-h^{2/n} \le (T^*T - |T^n|^{2/n})\Big|_{E([0,h])\mathcal H}
		\le T^*T\Big|_{E([0,h])\mathcal H}
		\le h^{2/n}.$$ Thus
		\begin{equation}\label{queq2}
			\|(T^*T - |T^n|^{2/n}) E([0,h])\| \le h^{2/n}.
		\end{equation}
		
		Now, since $E([0,h])\mathcal H$ and $E((kh,(k+1)h])\mathcal H$ ($k=1,2,\dots,N-1$) are invariant subspaces for $T^*T - |T^n|^{2/n}$, we conclude from \eqref{queq1} and \eqref{queq2} that
		\[
		\begin{aligned}
			&\|T^*T - |T^n|^{2/n}\|
			= \left\| (T^*T - |T^n|^{2/n}) \left( E([0,h]) + \sum_{k=1}^{N-1} E((kh,(k+1)h]) \right) \right\| \\
			&= \max \Bigg\{ \|(T^*T - |T^n|^{2/n}) E([0,h])\|, \max_{1\le k\le N-1} \|(T^*T - |T^n|^{2/n}) E((kh,(k+1)h])\| \Bigg\} \\
			&< 2h^{2/n} = 2\left( \frac{\|T^n\|}{N} \right)^{2/n}.
		\end{aligned}
		\]
		Letting $N\to\infty$, we get $T^*T = |T^n|^{2/n}$, i.e., $|T|^2 = |T^n|^{2/n}$. By the uniqueness of the positive square root, $|T| = |T^n|^{1/n}$, and hence $|T|^n = |T^n|$. Combining this with Lemma \ref{paralem}, we obtain
		\[
		T|T|^n = T|T^n| = |T^n|T = |T|^n T.
		\]
		By the functional calculus of $|T|^n$, this gives $T|T| = |T|T$, which yields that $T$ is quasinormal.
	\end{proof}

	\bibliographystyle{amsplain}
	
\end{document}